\numberwithin{equation}{section}
\let\Horig\H
\DeclareMathOperator{\E}{\mathbb{E}}
\DeclareMathOperator{\Var}{Var}
\newcommand{\norm}[1]{\left\Vert #1 \right\Vert}
\newcommand{\beqq}{ \begin{equation*} }
\newcommand{\eeqq}{ \end{equation*} }
\newcommand{\bse}{{\boldsymbol e}}
\newcommand{\bsu}{{\boldsymbol u}}
\newcommand{\RN}[1]{%
  \textup{\uppercase\expandafter{\romannumeral#1}}%
}
\DeclareMathOperator{\Tr}{Tr}
\def \C {\mathbb{C}}
\def \P {\mathbb{P}}
\def \R {\mathbb{R}}
\def \sphv {\mathbf{\sigma}}
\def \sphve {\sigma}
\def \Wg {A}
\def \Wgnm {M}
\def \sWg {V}
\def \sGOE {V^G}
\def \Wgint{H}
\def \scl {\dd \sigma_{scl}(x)}
\def \eg {\lambda}
\def \bnp {B}
\def \smallevent {\Omega_N}
\def \rWgnm {G}
\def \rWg {\hat{G}}
\def \expo {P}
\newtheorem{theorem}{Theorem}[section]
\newtheorem{proposition}[theorem]{Proposition}
\newtheorem{corollary}[theorem]{Corollary}
\newtheorem{lemma}[theorem]{Lemma}
\newtheorem{definition}[theorem]{Definition}
\theoremstyle{remark}
\newtheorem{remark}[theorem]{Remark}
\DeclareMathOperator{\SSK}{SSK}
\DeclareMathOperator{\CW}{CW}
\DeclareMathOperator{\TW}{TW}
\newcommand{\beq}{ \begin{equation} }
\newcommand{\eeq}{ \end{equation} }
\newcommand{\dd}{\mathrm{d}}
\newcommand{\ii}{\mathrm{i}}
\newcommand{\fe}{Q}
\newcommand{\tF}{\tilde{F}}
\newcommand{\es}{s}
\newcommand{\go}{\mathcal{G}_1}
\newcommand{\gt}{\mathcal{G}_2}
\newcommand{\Exp}{\mathbb{E}}
\DeclareMathOperator{\intc}{\mathbf{I}}
\DeclareMathOperator{\trans}{tran}
\newcommand{\cFt}{\mathcal{F}^{\trans}_N}
\DeclareMathOperator{\ferro}{ferro}
\newcommand{\cFf}{\mathcal{F}^{\ferro}_N}
\DeclareMathOperator{\para}{para}
\newcommand{\cFp}{\mathcal{F}^{\para}_N}
\begin{document}

\title{Ferromagnetic to paramagnetic transition in spherical spin glass}

\author{Jinho Baik\footnote{Department of Mathematics, University of Michigan,
Ann Arbor, MI, 48109, USA \newline email: \texttt{baik@umich.edu}}
\and Ji Oon Lee\footnote{Department of Mathematical Sciences, KAIST, Daejeon, 34141, Korea
\newline email: \texttt{jioon.lee@kaist.edu}}
\and 
Hao Wu\footnote{Department of Mathematics, University of Michigan,
Ann Arbor, MI, 48109, USA \newline email: \texttt{lingluan@umich.edu}}}


\maketitle
\begin{abstract}

We consider the spherical spin glass model defined by a combination of the pure 2-spin spherical Sherrington-Kirkpatrick Hamiltonian and the ferromagnetic Curie-Weiss Hamiltonian. 
In the large system limit, there is a two-dimensional phase diagram with respect to the temperature and the coupling strength. 
The phase diagram is divided into three regimes; ferromagnetic, paramagnetic, and spin glass regimes. 
The fluctuations of the free energy are known in each regime. 
In this paper, we study the transition between the ferromagnetic regime and the paramagnetic regime in a critical scale.
\end{abstract}
      
\section{Introduction} 


We consider a disordered system defined by random Gibbs measures whose Hamiltonian is the sum of a spin glass Hamiltonian and a ferromagnetic Hamiltonian. 
Depending on the strength of the coupling constant and the temperature, the system may exhibit several phases in the large system limit. 
The paper is concerned with the fluctuations of the free energy near the boundary between two phases known as ferromagnetic and paramagnetic regimes. 

Consider the sum of the pure $2$-spin spherical Sherrington-Kirkpatrick (SSK) Hamiltonian and the Curie-Weiss (CW) Hamiltonian. 
We call this sum the SSK+CW Hamiltonian. 
We denote the coupling constant by $J$ and the inverse temperature by $\beta$. 
We consider the random Gibbs measure with the SSK+CW Hamiltonian.
The focus of this paper is on the free energy. 

The limiting free energy was obtained non-rigorously by Kosterlitz, Thouless, and Jones \cite{Kosterlitz1976} in 1976. 
When $J=0$, this formula is the explicit evaluation of the Crisanti--Sommers formula \cite{crisanti1992sphericalp} (which was proved rigorously by Talagrand \cite{Talagrand2006s})  in the case of the pure $2$-spin SSK.  
The Crisanti--Sommers formula is the spherical version of the Parisi formula \cite{parisi1980sequence, Talagrand2006}.
The formula of Kosterlitz, Thouless, and Jones shows a two-dimensional phase transition: see Figure~\ref{phase_diagram_ssk}. 
The three regimes are determined by the condition that $\max\{ 1, \frac1{2\beta}, J\}$ is equal to $1$ (spin glass regime), $\frac1{2\beta}$ (paramagnetic regime) or $J$ (ferromagnetic regime). 
The limiting free energy is analytic with respect to both $\beta$ and $J$ in each regime, but not on the boundary.

        \setlength{\unitlength}{1cm}
        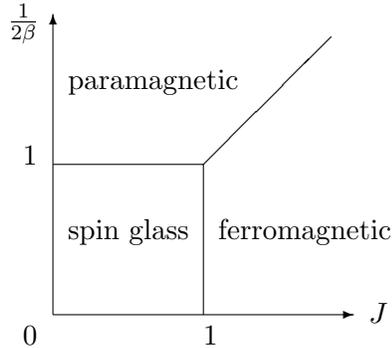
\begin{figure}
        \begin{center}
        \begin{picture}(5,5)
            
            \put(0,0){\vector(1,0){4}}
            \put(0,0){\vector(0,1){4}}
            \put(4.2,-0.1){\(J\)}
            \put(-0.6,3.8){\(\frac{1}{2\beta}\)}
            \put(-0.4,-0.4){0}
            \put(-0.4,2){1}
            \put(2,-0.4){1}
            \put(0.2,1){spin glass}
            \put(2,0){\line(0,1){2}}
            \put(0,2){\line(1,0){2}}
            \put(2,2){\line(1,1){1.7}}
            \put(0.2,3){paramagnetic}
            \put(2.2,1){ferromagnetic}
            
        \end{picture}
        \end{center}
            \caption{Phase diagram for SSK+CW model. Here, \(\beta\) is the inverse temperature and \(J\) is the coupling constant.}
            \label{phase_diagram_ssk}
        \end{figure}    

Recently, the authors of \cite{MR3649446} showed that the result of Kosterlitz, Thouless, and Jones is rigorous. 
Furthermore, the authors also evaluated the distribution of the fluctuations of the free energy in each regime.  (The case when $J=0$ was obtained earlier in \cite{MR3554380}.) 
The order of the fluctuations are $N^{-2/3}, N^{-1}, N^{-1/2}$ and the limiting distributions are Tracy-Widom, Gaussian, and Gaussian 
in the spin glass, paramagnetic regime, ferromagnetic regime, respectively. 
In the same paper, the transition between the spin glass regime and the ferromagnetic regime was also studied. 
However, the other two transitions and the triple point were left open. 
The goal of this paper is to describe the transition between the paramagnetic regime and and the ferromagnetic regime. 

\medskip

Another system which combines a spin glass and a ferromagnetic model is the SSK  with an external field. 
The difference between the CW Hamiltonian and an external field is that one is a quadratic function and the other is a linear function of the spin variables. 
These two models are related; see \cite{Chen2014} for a one-sided inequality.  
For the spin glass with external field, the fluctuations of the free energy were computed recently in \cite{ChenDeyPanchenkp2017, ChenSen2017} when the coupling constant is positive (for both SSK and SK (Sherrington-Kirkpatrick) cases with general spin interactions). 
However, the transitions are not obtained except for certain large deviation results \cite{FyodorovLeDoussal2014, DemboZeitouni2015}.
One of the interests of the SSK+CW model is that it is an easier model which can be analyzed in detail in the transitional regimes.

\subsection{Model}

Let \beq
    S_{N-1} = \{\sphv =(\sphv_1, \cdots, \sphv_N)\in \R^N: \sigma_1^2+\cdots + \sigma_N^2=N \}
\eeq
be a sphere in $\R^N$ of radius $\sqrt{N}$. 
Define the SSK+CW Hamiltonian by 
\begin{equation}
    H_N(\sphv) = H^{\SSK}_N(\sphv) + H^{\CW}_N(\sphv) ,  \qquad \sphv\in S_{N-1} 
\end{equation}
where 
\begin{equation}
    H_N^{\SSK}(\sphv) = \frac{1}{\sqrt{N}}\sum_{i,j = 1}^N\Wg_{ij}\sphve_i\sphve_j, 
    \qquad
    H_N^{\CW}(\sphv) =  \frac{J}{N} \sum_{i,j=1}^N \sphve_i \sphve_j = \frac{J}{N} \left( \sum_{i=1}^N \sigma_i\right)^2. 
\end{equation}
Here $J$ is the coupling constant.
The random coefficients $\Wg_{ij}$ satisfy $\Wg_{ij}=\Wg_{ji}$ and $\Wg_{ij}$, $i\le j$, are independent centered random variables. 
We call $\Wg_{ij}$ disorder variables. 
The precise conditions are given in Definition~\ref{def:Wgnm} below. 
Note that as a function of $\sphv$, $H_N^{\CW}(\sphv)$ is large when the coordinates of $\sphv$ have same sign. On the other hand, the maximizers $\sphv$ of $H_N^{\SSK}(\sphv)$ depend highly on $\{ \Wg_{ij}\}$.

With $\beta>0$ representing the inverse temperature, the free energy and the partition function are defined by 
\beq \label{eq:fepfde}
    F_N=\frac1{N} \log Z_N, \qquad Z_N= \int_{S_{N-1}} e^{\beta H_N(\sigma)}  \dd \omega_N(\sphv)
\eeq
where \(\omega_N\) is the normalized uniform measure on \(S_{N-1}\).
Note that $F_N$ and $Z_N$ are random variables since they depend on the disorder variables $\Wg_{ij}$. 
The free energy and the partition function depend on the parameters $\beta$ and $J$, 
\beq
    F_N=F_N(\beta, J), \qquad Z_N=Z_N(\beta, J).
\eeq

Since the Curie-Weiss Hamiltonian is a quadratic function of the spin variable, we can write the SSK+CW Hamiltonian as 
$H_N(\sphv)= \sum_{i,j=1}^N M_{ij} \sigma_i \sigma_j$ where $M_{ij} = \frac1{\sqrt{N}} \Wg_{ij} + \frac{J}{N}$
are non-centered random variables. In terms of matrix notations, 
\beq\label{H_ssk+cw}
    H_N(\sphv)= \sigma^T \Wgnm\sigma , \qquad \Wgnm= \frac1{\sqrt{N}} \Wg +  \frac{J}{N} \mathbf{1} \mathbf{1}^T
\eeq
with $\Wg=(\Wg_{ij})_{1\le i,j\le N}$, $\mathbf{1} = (1, \cdots, 1)^T$, $\Wgnm=(\Wgnm_{ij})_{1\le i,j \le N}$, and $\sigma=(\sigma_1, \cdots, \sigma_N)^T$. 
The non-centered random symmetric matrix $\Wgnm$ is an example of a real Wigner matrix perturbed by a deterministic finite rank matrix. Such matrices are often called spiked random matrices. 
We will use the eigenvalues of spiked random matrices in our analysis of the free energy. 

\bigskip

We assume the following conditions on the disorder variables. 
\begin{definition}[Assumptions on disorder variables]\label{def:Wgnm}
                Let \(\Wg_{ij}\), $i\le j$, be independent real random variables satisfying the following conditions: 
                \begin{enumerate}               
                    \item[\(\circ\)] All moments of \(\Wg_{ij}\) are finite and \(\E[\Wg_{ij}] = 0\) for all $i\le j$. 
                    \item[\(\circ\)] For all \(i < j\), \(\E[\Wg_{ij}^2] = 1\), \(\E[\Wg_{ij}^3] = W_3\), and \(\E[\Wg_{ij}^4] = W_4\) for some constants \(W_3 \in \R\) and \(W_4 \geq 0\). 
                    \item[\(\circ\)] For all \(i\), \(\E[\Wg_{ii}^2] = w_2\) for a constant \(w_2 \geq 0\). 
                \end{enumerate}
            Set \(\Wg_{ij} = \Wg_{ji}\) for \(i > j\).  Let \(\Wg = (\Wg_{ij})_{i,j = 1}^N\) and we call it a Wigner matrix (of zero mean). 
\end{definition}

\begin{definition}[Eigenvalues of non-zero mean Wigner matrices] 
Let $\Wgnm$ be the $N\times N$ symmetric matrix defined in~\eqref{H_ssk+cw}. 
We call it a Wigner matrix of non-zero mean \footnote{In \cite{MR3649446}, we consider the case when the diagonal entries of $\Wgnm$ have mean $\frac{J'}{N}$ and the off-diagonal entries have mean $\frac{J}{N}$ where $J$ and $J'$ are allowed to be different. However, in this case, $\Wgnm =\frac{1}{\sqrt{N}} + \frac{J}{N}\mathbf{1} \mathbf{1}^T + \frac{J'-J}{N} I$ where $I$ is the identity matrix. This only shifts all eigenvalues by a deterministic small number. As we will see in Remark \ref{rem:ls_convfcn}, it is not more general than the case with $J' = J$.\label{ft:mean_shift}}. 
Its eigenvalues are denoted by 
            \beq
                \lambda_1\ge \lambda_2\ge\cdots \ge \lambda_N. 
            \eeq 
\end{definition}

We introduce the following terminology.

\begin{definition}[High probability event] We say that an \(N\)-dependent event \(\Omega_N\) holds with high probability if, for any given \(D > 0\), there exists \(N_0 > 0\) such that 
\beqq
    \P(\Omega_N^c) \leq N^{-D}
\eeqq
for any \(N \geq N_0\).
\end{definition}

\subsection{Previous results in each regime}

We review the results on the fluctuations in each regime obtained in \cite{MR3649446}.
We state two types of results: one in terms of the eigenvalues of $\Wgnm$ and the other in terms of limiting distributions.

Set 
\beq
    \tilde{J}:= \max\{J, 1\}.
\eeq
It was shown in \cite{MR3649446} that the following holds with high probability. 
In both ferromagnetic and the spin glass regimes (given by $\tilde{J}>\frac1{2\beta}$), with any $\epsilon>0$, 
\beq \label{eq:thmspin1mid002}
    F_N = \tilde{F}_N+ \left(\beta - \frac1{2 \tilde{J} } \right) \left( \lambda_1 - \tilde{J}- \frac1{\tilde{J}} \right) + o(N^{-1+\epsilon}).
\eeq
In the paramagnetic regime (given by $\tilde{J}<\frac1{2\beta}$), 
\beq \label{eq:thmspin1high002} \begin{split}
    F_N = \tilde{F}_N - \frac{1}{2N} \sum_{i=1}^N \log\left( 2\beta+ \frac1{2\beta}- \lambda_i\right) + o(N^{-1}). 
\end{split} \eeq
Here, $\tilde{F}_N$ is a deterministic function of $N, \beta, J$. 
The above results show that the fluctuations of $F_N$ are determined, to the leading order, by the top eigenvalue $\lambda_1$ in the ferromagnetic and spin glass regimes, while they are determined by all eigenvalues in the paramagnetic regime. 

A limit theorem for $F_N$ follows if we use limit theorems for the eigenvalues of random matrices. 
The relevant random matrices are Wigner matrices of non-zero mean in~\eqref{H_ssk+cw}. 
For such random matrices, the following is known \cite{PRS2013, CDF2012} (see \cite{Baik-Ben_Arous-Peche05} for complex matrices): 
\beq \label{legld}
\begin{cases}
    N^{2/3} \left( \lambda_1 - 2  \right) \Rightarrow \TW_1  \quad &\text{if $J<1$, } \\
    N^{1/2} \left( \lambda_1 - J-\frac1{J}  \right)  \Rightarrow \mathcal{N}(W_3(J^{-2}-J^{-4}), 2(1-J^{-2})) \quad &\text{if $J>1$},
\end{cases} \eeq
where the convergences are in distribution. Here $\TW_1$ denotes the GOE Tracy-Widom distribution and $\mathcal{N}(a,b)$ denotes the Gaussian distribution of mean $a$ and variance $b$. 
The dichotomy is due to the effect of the non-zero mean; if $J$ is not large enough (i.e. $J<1$), then the influence of the non-zero mean is negligible to contribute to the fluctuations of the top eigenvalue. 
For $J<1$, the top eigenvalue is close to the second eigenvalue with order $O(N^{-2/3+\epsilon})$. But for $J>1$, the difference of the top eigenvalue and the second eigenvalue is of order $O(1)$. 

On the other hand, the following is also known (see Theorem 1.6 of \cite{MR3649446}): if a function $\varphi$ is smooth in an open interval containing the interval $[-2, \tilde{J} + \tilde{J}^{-1}]$, then 
\beq \label{lsts}
    \sum_{i=1}^N \varphi(\lambda_i)  -  N\int_{-2}^2 \varphi(x)\scl \Rightarrow \mathcal{N}(f,a), \qquad \scl := \frac{\sqrt{4 - x^2}}{2\pi} \dd x, 
\eeq
for some explicit constants $f, a$. 
This result is applicable to the paramagnetic regime. 

Together, we have the following asymptotic results obtained in Theorem 1.4 of \cite{MR3649446} (with a small correction in \cite{BaikLee17Erratum}):
\begin{enumerate}[(i)]
\item (Spin glass regime) If $\beta > \frac{1}{2}$ and $J < 1$, then 
\beq\label{eq:thmspin1low}
    \frac{1}{\beta-\frac{1}{2}} N^{2/3} \left( F_N - F \right) \Rightarrow \TW_{1}.
\eeq

\item (Paramagnetic regime) If $\beta < \frac{1}{2}$ and  $ \beta < \frac{1}{2J}$, then 
\beq\label{eq:thmspin1high}
    N \left( F_N - F \right) \Rightarrow \mathcal{N} \left(f_1, \alpha_1 \right). 
\eeq

\item (Ferromagnetic regime) If $J > 1$ and $\beta > \frac{1}{2J}$, then 
\beq \label{eq:thmspin1mid}
    \sqrt{N} \left( F_N - F \right) \Rightarrow \mathcal{N} \left(f_2', \alpha_2' \right). 
\eeq
\end{enumerate}
for some deterministic function $F=F(\beta, J)$ and some explicit constants $f_1, \alpha_1$, $f_2'$ and $\alpha_2'$ depending on \(\beta\) and \(J\).

\subsection{Results}

We state the results on the transition between the paramagnetic regime and the ferromagnetic regime. The boundary between these two regimes is given by the equation $\frac1{2\beta}=J$ with $J>1$. 
In the transitional regime, the correct scaling turns out to be the following: let $J>1$ be fixed and 
let $\beta=\beta_N$ be given by 
\begin{equation}
\label{eq:beta_N}
    2\beta = \frac{1}{J} + \frac{\bnp}{\sqrt{N}} 
\end{equation}
with fixed $\bnp\in \R$. 
The following is the first main result of this paper. 
This relates the free energy with the eigenvalues of $M$. 

\begin{theorem}
\label{thm:free_energy_para_ferro}
Let $\beta$ be given by~\eqref{eq:beta_N}. 
Then, for every  \(0<\epsilon < \frac18\), 
\begin{equation}
\label{eq:free_energy_para_ferro}
    F_N = \tF_N
    - \frac{1}{2N} \sum_{i = 2}^N g(\lambda_i) + \frac1{N} \fe(\chi_N) + O(N^{-3/2+4\epsilon}) , 
    \qquad  \chi_N:= \sqrt{N}(\lambda_1 - J+J^{-1}), 
\end{equation}
with high probability as $N\to \infty$, where 
\beq
    \tF_N = \beta (J+J^{-1})  -\frac{1}{2} - \frac{1}{2}\log(2\beta) + \frac{1}{N}\left( \frac14 \log N + \log\frac{\beta}{\sqrt{\pi}} \right),
    \quad
    g(z) := \log(J+J^{-1} - z).
\eeq
Also,  
\begin{equation}
\label{eq:def_fe}
    \fe(x)=  \frac{\es(x)}{2(\es(x) - x)} - \frac{\es(x)^2}{4(J^2 - 1)} + \frac{\log(\es(x) - x)}{2} + \log \intc\left(\frac{(\es(x)-x)^2}{J^2 - 1}\right)
\end{equation}
with 
\begin{equation}
\label{eq:def_es}
    \es(x) = \frac{x - \bnp(J^2 - 1) + \sqrt{(x + \bnp(J^2 - 1))^2 + 4(J^2 - 1)}}{2}
\end{equation}
and
 \begin{equation} \label{eq:ialpha_def}
    \intc(\alpha) = \int_{-\infty}^{\infty} \frac{e^{-\frac{\alpha}4 t^2+ \frac{\ii t}{2}}}{\sqrt{1 + \ii t}} 
    \dd t,
\end{equation}
where the square root denotes the principal branch. 
\end{theorem}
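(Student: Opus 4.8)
The plan is to write the partition function $Z_N$ as a one-dimensional contour integral and analyze it by the steepest-descent method; the feature special to the scaling \eqref{eq:beta_N} is that the relevant saddle point lies at distance $O(N^{-1/2})$ from the branch point created by the outlier $\lambda_1$, and this is the mechanism producing the integral $\intc$. First I would diagonalize $\Wgnm = O\diag(\lambda_1,\dots,\lambda_N)O^{\tran}$ and use the rotation invariance of $\omega_N$ to write $Z_N = \int_{S_{N-1}} e^{\beta\sum_i \lambda_i \sigma_i^2}\dd \omega_N(\sigma)$; representing the constraint $|\sigma|^2 = N$ by an inverse-Laplace integral and carrying out the Gaussian integral over $\sigma$ gives, for any $\gamma_0 > \lambda_1$,
\beq
    Z_N = \frac{\Gamma(N/2)\,\beta^{1-N/2}}{N^{(N-2)/2}}\cdot\frac1{2\pi\ii}\int_{\gamma_0-\ii\infty}^{\gamma_0+\ii\infty} e^{NG(z)}\,\dd z,\qquad G(z):=\beta z - \frac1{2N}\sum_{i=1}^N\log(z-\lambda_i).
\eeq
Taking $\tfrac1N\log$ and applying Stirling's formula to the prefactor already produces the deterministic part of $\tF_N$ and the corrections $\tfrac1{4N}\log N$ and $\tfrac1N\log\tfrac\beta{\sqrt\pi}$ (the coefficient $\tfrac14$ rather than $\tfrac12$ emerging after combining with the factor $N^{-1/4}$ from the next step).

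\textbf{Localization and the $\intc$-integral.} I would split $G = \tilde G - \tfrac1{2N}\log(z-\lambda_1)$ with $\tilde G(z):=\beta z - \tfrac1{2N}\sum_{i\ge2}\log(z-\lambda_i)$. To leading order the critical point of $\tilde G$ is $z_\ast = 2\beta+(2\beta)^{-1} = (J+J^{-1}) - \bnp(J^2-1)N^{-1/2} + O(N^{-1})$, whereas $\lambda_1 = J+J^{-1}+\chi_N N^{-1/2}$ with $\chi_N = O(1)$ with high probability by \eqref{legld}; hence the branch point $z=\lambda_1$ is only $O(N^{-1/2})$ from $z_\ast$. Choosing $\gamma_0 = \lambda_1 + \delta N^{-1/2}$, I would show that, up to a relative error $O(N^{-1+4\epsilon})$, the integral is concentrated on $|z-\lambda_1|\le N^{-1/2+\epsilon}$ (the far part of the vertical line is controlled by $|e^{NG(z)}|\lesssim e^{N\beta\gamma_0}|\im z|^{-N/2}$, and the intermediate annulus $N^{-1/2+\epsilon}\le|z-\lambda_1|\le c$ by deforming onto a steepest-descent path of $G$ and estimating $\re G$ along it). On the remaining arc I substitute $z=\lambda_1+N^{-1/2}t$, use rigidity and the local law for the bulk eigenvalues $\lambda_2,\dots,\lambda_N$ together with $\lambda_1-(J+J^{-1})=O(N^{-1/2})$ to expand
\beqq
    N\tilde G(z) = N\tilde G(\lambda_1) + at + \tfrac12 bt^2 + O(N^{-1/2+3\epsilon}),\qquad a = \tfrac\bnp2 + \tfrac{\chi_N}{2(J^2-1)} + o(1),\quad b = \tfrac1{2(J^2-1)} + o(1),
\eeqq
and use $(z-\lambda_1)^{-1/2} = N^{1/4}t^{-1/2}$, so that
\beqq
    \frac1{2\pi\ii}\int e^{NG(z)}\dd z = \frac{N^{-1/4}}{2\pi\ii}\,e^{N\tilde G(\lambda_1)}\int_{\delta-\ii\infty}^{\delta+\ii\infty}\frac{e^{at+\frac12 bt^2}}{\sqrt t}\,\dd t\,(1+o(1)).
\eeqq
A linear change of variable identifies the last integral with a constant multiple of $\intc(\alpha)$; solving the resulting defining relation gives $\alpha = (\es(\chi_N)-\chi_N)^2/(J^2-1)$ with $\es$ as in \eqref{eq:def_es} (in fact $\lambda_1 + N^{-1/2}(\es(\chi_N)-\chi_N)$ is the genuine saddle point of $G$).

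\textbf{Assembling.} I would then expand $N\tilde G(\lambda_1) = N\beta\lambda_1 - \tfrac12\sum_{i\ge2}\log(\lambda_1-\lambda_i)$ via $\log(\lambda_1-\lambda_i) = g(\lambda_i) + \log\bigl(1 + \chi_N N^{-1/2}((J+J^{-1})-\lambda_i)^{-1}\bigr)$, evaluating the sums to the required precision with rigidity and the linear-statistics CLT \eqref{lsts} for the bulk eigenvalues; this produces $N\beta(J+J^{-1}) - \tfrac12\sum_{i\ge2}g(\lambda_i)$ plus explicit lower-order terms. Combining with the prefactor of Step 1 and the factor $N^{-1/4}e^{N\tilde G(\lambda_1)}\intc(\alpha)$ above, and simplifying (the algebra closing through the identity $\es^2 - (\chi_N - \bnp(J^2-1))\es - (J^2-1) = 0$), yields exactly \eqref{eq:free_energy_para_ferro}: the term $\tfrac1{2N}\log\intc(\alpha)$ and the rational and quadratic parts of $\fe$ in \eqref{eq:def_fe} come out as stated, and every discarded term is $O(N^{-3/2+4\epsilon})$.

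\textbf{Main obstacle.} The hard part will be the localization step. Unlike the non-critical ferromagnetic regime treated in \cite{MR3649446}, here the bulk critical point $z_\ast$ may lie on either side of the branch point $\lambda_1$, only $O(N^{-1/2})$ away, so one cannot simply deform the contour through a well-separated saddle; instead one must zoom in at scale $N^{-1/2}$ near the branch point, realize the dominant contribution as $\intc(\alpha)$, and prove the remainder of the steepest-descent contour is negligible, all while tracking errors down to $O(N^{-3/2+4\epsilon})$. This forces the random-matrix input to be sharp: the separation of $\lambda_1$ from the rest of the spectrum, rigidity of $\lambda_2,\dots,\lambda_N$, the bound $\lambda_1-(J+J^{-1}) = O(N^{-1/2})$, and precise asymptotics of $\tfrac1N\sum_{i\ge2}(\lambda_1-\lambda_i)^{-k}$ for $k=1,2,3$. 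A secondary issue is to verify $\intc(\alpha)\neq 0$ for the relevant range of $\alpha>0$, so that $\log\intc$, hence $\fe$, is well defined.
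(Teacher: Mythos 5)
Your proposal is correct and follows essentially the same route as the paper: the same contour-integral representation of $Z_N$, a steepest-descent analysis localized at scale $N^{-1/2}$ near $\lambda_1$ whose local integral is identified with $\intc$, the same identification $\alpha=(\es(\chi_N)-\chi_N)^2/(J^2-1)$ (equivalently, the genuine saddle of $G$ sits at $\lambda_1+N^{-1/2}(\es(\chi_N)-\chi_N)$ up to $O(N^{-1+\epsilon})$, which is exactly Lemma \ref{lemma:gamma_lambda_1}), and the same rigidity/law-of-large-numbers and linear-statistics inputs for the final assembly. The only organizational differences are that the paper centers the vertical contour at the exact critical point $\gamma$ of the full $G$ (which is always to the right of $\lambda_1$ at distance $\asymp N^{-1/2}$, so your ``which side of the branch point'' concern dissolves) and keeps the $\log(z-\lambda_1)$ branch factor exactly through the auxiliary function $F$ in \eqref{eq:F_1} rather than expanding the bulk part $\tilde G$ at $\lambda_1$ with a linear term, together with the quantitative facts about $\intc$ (positivity, the lower bound $\intc(F''(\gamma)\Delta^2)\gtrsim N^{-\epsilon}$, bounded $\intc'$) needed to turn additive errors into the stated relative error; also, your quoted closing identity should read $(\es-\chi_N)^2+(\chi_N+\bnp(J^2-1))(\es-\chi_N)=J^2-1$, i.e.\ $\es^2-(\chi_N-\bnp(J^2-1))\es=(J^2-1)(1+\bnp\chi_N)$, rather than $\es^2-(\chi_N-\bnp(J^2-1))\es-(J^2-1)=0$.
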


The formula~\eqref{eq:free_energy_para_ferro} shows a combined contribution from $\lambda_2, \cdots, \lambda_N$ and a distinguished contribution from $\lambda_1$. 
Compare the formula with~\eqref{eq:thmspin1mid002} and~\eqref{eq:thmspin1high002}.

\bigskip

Now we state a result analogous to~\eqref{eq:thmspin1high} and~\eqref{eq:thmspin1mid}. 
This follows if we have limit theorems for $\fe(\chi_N)$ and $\sum_{i = 2}^N g(\lambda_i)$. 
From the second part of~\eqref{legld}, $\fe(\chi_N)$ converges to an explicit function of a Gaussian random variable. 
On the other hand, $\sum_{i = 2}^N g(\lambda_i)$ is different from $\sum_{i = 1}^N g(\lambda_i)$ by one term. 
It is not difficult to show that removing one term does not affect the fluctuations much and the fluctuations are still given by a Gaussian random variable similar to~\eqref{lsts}; see Theorem~\ref{conj:1} in the next section. 
In random matrix theory, these sums are known as partial linear statistic and linear statistic, respectively. 
The main technical part of this paper is to evaluate the joint distribution of $\fe(\chi_N)$ and $\sum_{i = 2}^N g(\lambda_i)$. 
We show that jointly they converge in distribution to a bivariate Gaussian variable with an explicit covariance. 
See the next section for the precise statement. 
These results are interesting on their own in random matrix theory.
Putting together, we obtain the following result. 

\begin{theorem}
\label{thm:fluc_free_energy_para_ferro}
    We have
    \begin{equation}
        \begin{aligned}
        \label{eq:fluct_fe_para_ferro}
        N &\left(F_N - \frac{1}{4J^2} - \frac{\bnp}{2J\sqrt{N}} - \frac{\log N}{4N} - \frac{\bnp^2 J^2}{4N} \right) \Rightarrow 
        \go + \fe(\gt)
        \end{aligned}
    \end{equation}
    in distribution as $N\to\infty$ where $\go$ and $\gt$ are bivariate Gaussian random variables with 
    \begin{align} \label{gomeanvc}
        &\Exp[\go]= \frac14 \log (J^2-1)+ \frac{w_2 - 2}{4J^2} + \frac{W_4 - 3}{8J^4} + \log\frac{1}{2\sqrt{\pi}J} , \\
        &\Var[\go] =  
        -\frac{1}{2}\log(1 - J^{-2}) + \frac{w_2 - 2}{4J^2} + \frac{W_4 - 3}{8J^4}, 
    \end{align}
    \beq \label{gtmeanvc}
        \Exp[\gt]= W_3(J^{-2} - J^{-4}), \qquad \Var[\gt]= 2(1 - J^{-2}),
    \eeq
    and 
        \beq
         \mathrm{Cov}(\go, \gt) = \frac{W_3(J^{-2} - J^{-4})}{2}. 
    \eeq
    Note that $\go$ and $\gt$ do not depend on $B$. The function $\fe$ is defined in~\eqref{eq:def_fe}.
\end{theorem}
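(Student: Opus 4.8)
The plan is to derive the statement from Theorem~\ref{thm:free_energy_para_ferro} once the deterministic normalization is rewritten and a joint limit theorem for the top eigenvalue $\lambda_1$ and the bulk linear eigenvalue statistic of the spiked matrix $M$ is in hand; the latter is the content of the next section, and the work here is only the reduction and the bookkeeping of constants.

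First I would substitute $2\beta=J^{-1}+\bnp N^{-1/2}$ into the expression for $\tF_N$ and Taylor-expand in powers of $N^{-1/2}$, which gives
\beqq
    \tF_N = \frac1{2J^2} + \frac12\log J + \frac{\bnp}{2J\sqrt N} + \frac{\bnp^2 J^2}{4N} + \frac{\log N}{4N} + \frac1N\log\frac{\beta}{\sqrt\pi} + O(N^{-3/2}).
\eeqq
Feeding this together with~\eqref{eq:free_energy_para_ferro} into the left side of~\eqref{eq:fluct_fe_para_ferro}, all the explicit deterministic terms cancel, and since $N\cdot O(N^{-3/2+4\epsilon})=o(1)$ precisely when $\epsilon<\tfrac18$ (the sole use of the hypothesis on $\epsilon$) the remainder is negligible with high probability. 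Using in addition the classical evaluation $\int_{-2}^2 g(x)\,\scl=\log J+\tfrac1{2J^2}$ of the logarithmic potential of the semicircle (differentiate in $z$ to recover the Stieltjes transform $m_{sc}$ and integrate, or expand $\log(t+t^{-1}-2\cos\theta)=\log t-\sum_{k\ge 1}\tfrac2k t^{-k}\cos k\theta$) and $\log(\beta/\sqrt\pi)=\log\tfrac1{2\sqrt\pi J}+o(1)$, I arrive at
\beqq
    N\Big(F_N - \tfrac1{4J^2} - \tfrac{\bnp}{2J\sqrt N} - \tfrac{\log N}{4N} - \tfrac{\bnp^2 J^2}{4N}\Big) = -\frac12\Big(\sum_{i=2}^N g(\lambda_i) - N\!\int_{-2}^2 g(x)\,\scl\Big) + \fe(\chi_N) + \log\frac1{2\sqrt\pi J} + o(1)
\eeqq
with high probability. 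By Slutsky's theorem it then suffices to establish a joint distributional limit for the pair $\big(\sum_{i=2}^N g(\lambda_i)-N\int_{-2}^2 g(x)\,\scl,\ \chi_N\big)$ and to identify its parameters.

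The key input, proved in the next section, is that this pair converges in distribution to a bivariate Gaussian $(\widetilde\go,\gt)$. Here $g(x)=\log(J+J^{-1}-x)$ is real-analytic on an open neighborhood of $[-2,2]$ because $J+J^{-1}>2$, so after the outlier $\lambda_1$ is excised, $\sum_{i\ge2}g(\lambda_i)$ is a bona fide smooth linear statistic of the bulk spectrum of $M$, and the next section proves a CLT for it jointly with the outlier fluctuation $\chi_N$. Granting this, I would note that $\fe$ in~\eqref{eq:def_fe} is smooth on all of $\R$: writing $u=x+\bnp(J^2-1)$ one has $\es(x)-x=\tfrac12\big(\sqrt{u^2+4(J^2-1)}-u\big)>0$, so both $\log(\es(x)-x)$ and $\intc\big((\es(x)-x)^2/(J^2-1)\big)$ are well defined (and the latter nonvanishing). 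Hence by the continuous mapping theorem applied to $(a,b)\mapsto(a,\fe(b))$, followed by linearity, the displayed quantity converges to $-\tfrac12\widetilde\go+\log\tfrac1{2\sqrt\pi J}+\fe(\gt)$. Setting $\go:=-\tfrac12\widetilde\go+\log\tfrac1{2\sqrt\pi J}$ gives a bivariate Gaussian $(\go,\gt)$ and the claimed limit $\go+\fe(\gt)$; moreover $\Exp[\go]=-\tfrac12\Exp[\widetilde\go]+\log\tfrac1{2\sqrt\pi J}$, $\Var[\go]=\tfrac14\Var[\widetilde\go]$, $\mathrm{Cov}(\go,\gt)=-\tfrac12\mathrm{Cov}(\widetilde\go,\gt)$, so substituting the mean, variance and covariance of $(\widetilde\go,\gt)$ furnished by the joint CLT yields~\eqref{gomeanvc} and the covariance formula, while~\eqref{gtmeanvc} is exactly the second line of~\eqref{legld}.

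The one genuinely hard step is the joint CLT for the bulk partial linear statistic $\sum_{i\ge2}g(\lambda_i)$ and the rescaled outlier $\chi_N$ of the rank-one deformation $M=N^{-1/2}A+\tfrac{J}{N}\mathbf{1}\mathbf{1}^\tran$, in particular the computation of the cross-covariance, which is where $W_3$ enters. I would approach it by extending the resolvent / characteristic-function proof of the CLT for linear eigenvalue statistics of Wigner matrices (keeping track of the $w_2$- and $W_4$-dependent corrections, in the style of Lytova--Pastur and Bai--Yao) to the deformed ensemble, and coupling it to the fluctuation analysis of $\lambda_1$ through the secular equation $\mathbf{1}^\tran(\lambda_1 I-N^{-1/2}A)^{-1}\mathbf{1}=N/J$; a further delicate point is to show, via eigenvalue interlacing and rigidity for the spike, that removing $\lambda_1$ from the spectrum only shifts the centering and does not affect the Gaussian limit. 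Everything else in the argument above is routine.
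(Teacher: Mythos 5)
Your proposal is correct and follows essentially the same route as the paper: substitute $2\beta=J^{-1}+\bnp N^{-1/2}$ into $\tF_N$, recentre the partial linear statistic by $\int g\,\scl=\log J+\tfrac1{2J^2}$, absorb the $O(N^{-3/2+4\epsilon})$ error via Slutsky, and invoke the joint CLT for $\bigl(\mathcal{N}_N^{(2)}(g),\chi_N\bigr)$ (Theorems \ref{conj:1} and \ref{prop:asy_ind_ls_eg1}) together with continuity of $\fe$. The only step you leave implicit is the explicit evaluation of $M^{(2)}(g)$, $V^{(2)}(g)$ and $\tau_2(g)=-\tfrac1{2J^2}$ for $g(x)=\log(J+J^{-1}-x)$, which the paper obtains by citing earlier computations; with those constants inserted your identification of the mean, variance and covariance matches \eqref{gomeanvc}--\eqref{gtmeanvc}.
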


Note that if the third moment $W_3$  of $\Wg_{ij}$ with $i\neq j$ is zero, then $\go$ and $\gt$ are independent Gaussians.

The above result is consistent with the results on ferromagnetic and paramgnatic regimes if we let formally $B\to +\infty$ and $B\to -\infty$, respectively. 
One can show that when $B\to +\infty$, $Q(\gt)$ dominates $\go$. 
Furthermore, while $Q(\gt)$ is not Gaussian, upon proper normalization, it converges to a Gaussian as $B\to +\infty$. See Figure \ref{fig:qg2}. 
On the other hand, when $B\to -\infty$, the leading two terms of $Q(\gt)$ are constants and the random part is smaller than $\go$.  
See Section \ref{sec:large_t_behavior} for details.

\begin{figure}
\centering
\begin{subfigure}{0.3\textwidth}
\includegraphics[width = 0.9\textwidth]{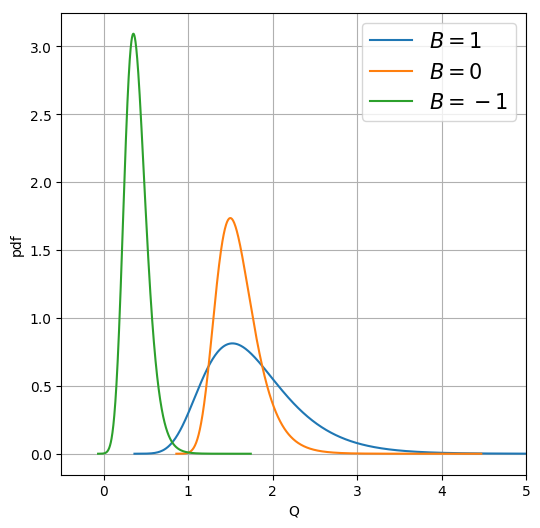}
\caption{pdf of $Q(\mathcal{G}_2)$}
\label{fig:non_gaussian}
\end{subfigure}
\begin{subfigure}{0.3\textwidth}
\includegraphics[width = 0.9\textwidth]{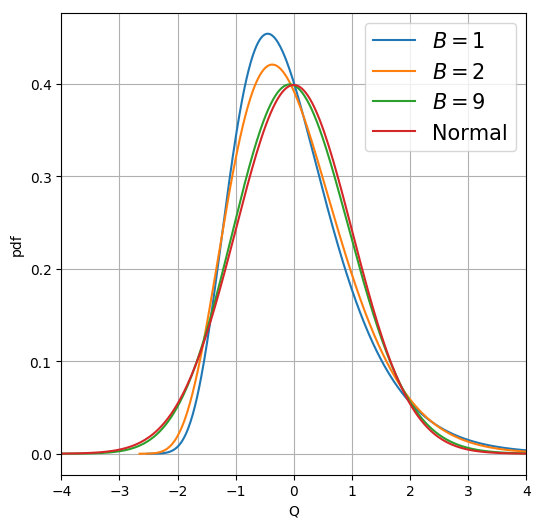}
\caption{pdf of normalized $Q(\mathcal{G}_2)$}
\label{fig:conv_to_gaussian}
\end{subfigure}
\caption{ 
(a) Probability density function of $Q(\mathcal{G}_2)$ for $B = -1,0,1$, 
(b) Probability density function of normalized $Q(\mathcal{G}_2)$  resembles a Gaussian density as $\bnp \rightarrow +\infty$.} 
\label{fig:qg2}
\end{figure}

\bigskip

Let us comment on the other transitions in the phase digram in Figure \ref{phase_diagram_ssk}.
As mentioned before, the transition between the spin glass and ferromagnetic regimes was discussed in \cite{MR3649446}. 
Note that \eqref{eq:thmspin1mid002} is valid in both regimes. 
It was shown that if we let $\beta > 1/2$ be fixed and consider $N$-dependent $J = 1 + wN^{-1/3}$, 
then for each $w \in \R$, \eqref{eq:thmspin1mid002} still holds.
Now, for such $J$, it was shown in \cite{BloemendalVirag2013} that $N^{2/3}(\eg_1 - 2) \Rightarrow \mathrm{TW}_{1,w}$
where $\mathrm{TW}_{1,w}$ is a one-parameter family of random variables interpolating $\TW$ and Gaussian distributions. 
Hence, we obtain the fluctuations for the transitional regime. 

On the other hand, the transition between the spin glass and paramagnetic regimes is an open question. 
By matching the fluctuation scales in both regimes, we expect that the critical scale is $\beta = \frac12 + O(\frac{\sqrt{\log N}}{N^{1/3}})$.

\subsection{Organization}

The rest of the paper is organized as follows. 
In Section~\ref{sec:RMT}, we first state new results on random matrices. They are given in Theorem \ref{conj:1} (partial linear statistics) and Theorem \ref{prop:asy_ind_ls_eg1} (joint convergence). Using them, we derive Theorem~\ref{thm:fluc_free_energy_para_ferro} from Theorem \ref{thm:free_energy_para_ferro}. 
In Section \ref{sec:free_energy}, we prove Theorem \ref{thm:free_energy_para_ferro}. 
In the next two sections, we prove the random matrix results stated in Section~\ref{sec:RMT}; Theorem \ref{conj:1} in Section \ref{sec:linear_statistics} and Theorem \ref{prop:asy_ind_ls_eg1} in Section \ref{sec:asy_gau}. 
In Section~\ref{sec:large_t_behavior}, we show that Theorem~\ref{thm:fluc_free_energy_para_ferro} is consistent with the previous results on ferromagnetic and paramagnetic regimes.

\subsubsection*{Acknowledgments}
The work of Jinho Baik was supported in part by NSF grant DMS-1664692 and the Simons Fellows program.
The work of Ji Oon Lee was supported in part by Samsung Science and Technology Foundation project number SSTF-BA1402-04.

\section{Results on Wigner matrices with non-zero mean} 
\label{sec:RMT}

In order to prove Theorem \ref{thm:fluc_free_energy_para_ferro} from Theorem \ref{thm:free_energy_para_ferro}, we need 
some new results on random matrices.
We need (i) a limit theorem for partial linear statistics \(\sum_{i = 2}^N g(\eg_i)\) and (ii) a joint convergence of the large eigenvalue and partial linear statistics. 
These results are interesting on their own in random matrix theory. 
We state them here and prove them in Section  \ref{sec:linear_statistics} and Section \ref{sec:asy_gau} below. 
Using these results, we prove Theorem \ref{thm:fluc_free_energy_para_ferro} in Subsection~\ref{sec:pfofth}. 

Recall that the $N\times N$ symmetric matrix $\Wgnm$ is given by $\Wgnm = \frac1{\sqrt{N}} \Wg +  \frac{J}{N} \mathbf{1} \mathbf{1}^T$
where $\Wg=(\Wg_{ij})$ is a symmetric matrix with independent entries for $i\le j$ satisfying the conditions given in Definition~\ref{def:Wgnm} and $\mathbf{1}=(1, \cdots, 1)^T$. 
The matrix $\Wgnm$ is called a Wigner matrix with a non-zero mean $\frac{J}{N}$. 
Recall that we assume  
\beq
    J>1.
\eeq
The eigenvalues of $\Wgnm$ are denoted by $\lambda_1\ge \cdots\ge \lambda_N$. 

It is known that $\lambda_1$ is close to $J+J^{-1}$ with high probability and $\lambda_2, \cdots, \lambda_N$ are in a neighborhood of $[-2, 2]$ with high probability. See Lemma~\ref{lemma:rigidity} below for the precise statement. 

\subsection{Partial linear statistics}

A linear statistic is the sum of a function of the eigenvalues. 
The fluctuations of linear statistics for Wigner matrices and other random matrix ensembles are of central interest in the random matrix theory; see, for example, \cite{MR1487983,Bai2005,Lytova2009}.
For Wigner matrices with non-zero mean, the following result was obtained in Theorem 1.6 and Remark 1.7 of \cite{MR3649446}. 
Set 
\beq
    \hat{J}=J+J^{-1}.
\eeq
Let \(\varphi: \R \rightarrow \R\) be a function which is analytic in an open neighborhood of \([-2, \hat{J}]\) and has compact support. 
Then, as $N\to \infty$, the random variable
\begin{equation} \label{eq:def_ls}
    \mathcal{N}_N(\varphi) := \sum_{i = 1}^N \varphi(\lambda_i) - N\int_{-2}^2\varphi(x)\mathrm{d}\sigma_{scl}(x)
    \Rightarrow \mathcal{N}(M(\varphi), V(\varphi))
\end{equation}
where 
\begin{equation} \label{meannoe}
\begin{aligned}
    M(\varphi) =& \frac{1}{4}(\varphi(2) + \varphi(-2)) - \frac{3}{2}\tau_0(\varphi) - J^{-1} \tau_1(\varphi) + (w_2 - 2)\tau_2(\varphi) \\
    & + (W_4 - 3)\tau_4(\varphi) + \varphi(\hat{J}) - \sum_{\ell= 2}^\infty J^{-\ell}\tau_\ell(\varphi) , \\
    V(\varphi) = & (w_2 - 2)\tau_1(\varphi)^2 + (W_4 - 3)\tau_2(\varphi)^2 + 2\sum_{\ell = 1}^\infty \ell\tau_\ell(\varphi)^2.
\end{aligned}
\end{equation}
Here, $W_4= \Exp[A_{12}^4]$, $w_2=\Exp[A_{11}^2]$, and 
\begin{equation}
    \tau_\ell(\varphi) = \frac{1}{\pi} \int_{-2}^2 \varphi(x)\frac{T_\ell(x/2)}{\sqrt{4 - x^2}} \dd x = \frac{1}{2\pi} \int_{-\pi}^{\pi} \varphi(2\cos(\theta))\cos(\ell\theta)\mathrm{d}\theta,
\end{equation}
where \(T_\ell(t)\) are the Chebyshev polynomials of the first kind.

\bigskip

We are interested in a partial linear statistic, $\sum_{i=2}^N \varphi(\lambda_i)$. 
See \cite{bao2013central, o2015partial} for other types of partial linear statistics. 
The partial linear static $\sum_{i=2}^N \varphi(\lambda_i)$ is the linear statistic minus one term $\varphi(\lambda_1)$.
Since $\lambda_1\to \hat{J}$ in probability (see the second part of \eqref{legld}), 
by~\eqref{eq:def_ls}, 
Slutsky's theorem implies that 
$$\sum_{i=2}^N \varphi(\lambda_i) - N\int_{-2}^2\varphi(x)\mathrm{d}\sigma_{scl}(x)\Rightarrow \mathcal{N}(M(\varphi)- \varphi(\hat{J}), V(\varphi)).
$$ 
Since this follows from~\eqref{eq:def_ls}, this is true assuming that $\varphi$ is analytic in an open neighborhood of \([-2, \hat{J}]\).
However, we are interested in the test function $\varphi(x)= g(x)= \log (\hat{J}-x)$ (see \eqref{eq:free_energy_para_ferro}). 
Since this function is not analytic at $x=\hat{J}$, the above simple argument does not apply. 
Nonetheless, if we adapt the proof of~\eqref{eq:def_ls}, one can show that it is enough to assume that the test function is analytic in a neighborhood of the interval $[-2,2]$, not of $[-2, \hat{J}]$.

\begin{theorem} \label{conj:1}
Let \(J > 1\). 
Then for every  test function \(\varphi\)
which is analytic in a neighborhood of \([-2, 2 ]\), 
\begin{equation}
\label{eq:def_partial_ls}
    \mathcal{N}_N^{(2)}(\varphi) := \sum_{i = 2}^N \varphi(\lambda_i) - N\int_{-2}^2 \varphi(x)\scl
    \Rightarrow \mathcal{N}(M^{(2)}(\varphi), V^{(2)}(\varphi))
\end{equation}
as $N\to \infty$ with
\begin{equation} \label{meannoett}
\begin{aligned}
    M^{(2)}(\varphi) =& \frac{1}{4}(\varphi(2) + \varphi(-2)) - \frac{3}{2}\tau_0(\varphi) - J^{-1} \tau_1(\varphi) + (w_2 - 2)\tau_2(\varphi) \\
    & + (W_4 - 3)\tau_4(\varphi)  - \sum_{\ell= 2}^\infty J^{-\ell}\tau_\ell(\varphi),
\end{aligned}
\end{equation}
and $V^{(2)}(\varphi) = V(\varphi)$  
where $V(\varphi)$ is defined in~\eqref{meannoe}. 
\end{theorem}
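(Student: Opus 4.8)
\textbf{Proof plan for Theorem~\ref{conj:1}.}

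The plan is to re-run the proof of the full linear-statistic CLT~\eqref{eq:def_ls} from~\cite{MR3649446}, but tracking carefully where the test function $\varphi$ is actually evaluated, so that analyticity near $[-2,2]$ suffices. First I would recall the standard decomposition $\Wgnm = \frac1{\sqrt N}\Wg + \frac{J}{N}\mathbf{1}\mathbf{1}^\tran$ and the fact (Lemma~\ref{lemma:rigidity}) that with high probability $\lambda_1 = \hat J + o(1)$ is isolated while $\lambda_2,\dots,\lambda_N$ obey the usual rigidity estimates near $[-2,2]$. The point is that $\mathcal{N}_N^{(2)}(\varphi) = \sum_{i=2}^N\varphi(\lambda_i) - N\int_{-2}^2\varphi\,\dd\sigma_{scl}$ only ever samples $\varphi$ at the bulk eigenvalues $\lambda_2,\dots,\lambda_N$, all of which lie (with high probability) in a fixed neighborhood of $[-2,2]$ on which $\varphi$ is analytic. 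So the contour-integral / Helffer--Sj\"ostrand representation used in~\cite{MR3649446} can be set up with a contour $\Gamma$ encircling $[-2,2]$ only (not reaching out to $\hat J$), and one writes $\sum_{i=2}^N\varphi(\lambda_i) = \frac{1}{2\pi\ii}\oint_{\Gamma}\varphi(z)\bigl(\tr(\Wgnm - z)^{-1} - (\lambda_1 - z)^{-1}\bigr)\,\dd z$ up to a negligible error, the subtracted term accounting exactly for the removed eigenvalue.

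The next step is to compare $\tr(\Wgnm - z)^{-1}$ with the resolvent trace of the centered Wigner matrix $\frac1{\sqrt N}\Wg$. A rank-one perturbation changes a resolvent trace by an $O(1)$ amount governed by a single matrix element, and on the contour $\Gamma$ (bounded away from the support of the perturbed spectrum near $\hat J$) this correction is deterministic to leading order plus a lower-order fluctuating term; in fact the net effect on the linear statistic is precisely to add the deterministic-looking $-\sum_{\ell\ge 2}J^{-\ell}\tau_\ell(\varphi)$ and the $-J^{-1}\tau_1(\varphi)$ shift to the mean, and to leave the variance unchanged — this is why $V^{(2)}(\varphi)=V(\varphi)$ and why $M^{(2)}(\varphi)$ is just $M(\varphi)$ with the $\varphi(\hat J)$ term deleted. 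For the centered Wigner part one invokes the classical CLT for analytic linear statistics of Wigner matrices (the $\frac14(\varphi(2)+\varphi(-2)) - \frac32\tau_0 + (w_2-2)\tau_2 + (W_4-3)\tau_4$ contribution to the mean and the $(w_2-2)\tau_1^2 + (W_4-3)\tau_2^2 + 2\sum_{\ell\ge1}\ell\tau_\ell^2$ variance), which only requires analyticity near $[-2,2]$; so the weakened hypothesis is exactly matched to what the Wigner-part argument needs.

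Concretely I would: (1) fix a neighborhood $U\supset[-2,2]$ on which $\varphi$ is analytic and a contour $\Gamma\subset U$ enclosing $[-2,2]$ with $\lambda_1\notin\Gamma$'s interior w.h.p.; (2) write $\mathcal{N}_N^{(2)}(\varphi)$ as a contour integral against $\tr(\Wgnm-z)^{-1}$ minus the $(\lambda_1-z)^{-1}$ term, controlling the tail/error using rigidity and the subexponential-moment bounds from Definition~\ref{def:Wgnm}; (3) expand the rank-one perturbation via the Sherman--Morrison identity to separate the centered-Wigner resolvent from the $\frac{J}{N}\mathbf{1}\mathbf{1}^\tran$ correction, identifying the deterministic shift $-\sum_{\ell\ge1}J^{-\ell}\tau_\ell(\varphi)$ (with the $\ell=1$ term written as $-J^{-1}\tau_1(\varphi)$); (4) apply the known Wigner linear-statistic CLT to the centered part; (5) check that the perturbation's fluctuation contribution is $o(1)$ on $\Gamma$ so the variance is unaffected; (6) assemble mean and variance. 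The main obstacle is step (3)/(5): one must show that after removing the single term $\varphi(\lambda_1)$ the residual interaction between the spike direction $\mathbf{1}$ and the bulk resolvent contributes only a deterministic shift and not an extra Gaussian summand — i.e. that $\ip{\mathbf{1}}{(\tfrac1{\sqrt N}\Wg - z)^{-1}\mathbf{1}}/N$ concentrates sharply enough (with fluctuations $o(1)$ when integrated against $\varphi$ over $\Gamma$) that it drops out of the CLT. This is a standard but delicate isotropic-local-law / concentration estimate for quadratic forms of Wigner resolvents, and keeping the error $o(1)$ uniformly on $\Gamma$ is the technical heart of the argument.
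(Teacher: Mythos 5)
Your plan would prove the theorem, but it follows a genuinely different route from the paper's. The paper never re-opens the rank-one decomposition: it takes as a black box the process-level CLT for the full resolvent trace $\xi_N(z)$ of the \emph{deformed} matrix, already established in \cite{MR3649446} (restated as Lemma~\ref{lemma:ls_full}), writes $\xi_N^{(2)}(z)=\xi_N(z)-\frac{1}{\lambda_1-z}$, and deduces the process CLT for $\xi_N^{(2)}$ on contours away from the real axis by Slutsky plus a tightness check, since $\frac{1}{\lambda_1-z}\to\frac{1}{\hat J-z}$ in probability (Proposition~\ref{prop:conv_xi_2}). The structural point that lets the hypothesis be weakened to analyticity near $[-2,2]$ is that the limiting mean $b^{(2)}(z)=b(z)-\frac{1}{\hat J-z}$ is analytic across $z=\hat J$ (the pole of $b$ cancels), so the rectangular contour may be taken with right side $a_+<\hat J$; the near-real-axis pieces of the contour are then controlled by the second-moment bounds of Lemma~\ref{lemma:bound_xi_2} in the limit $v_0\to 0^+$. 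You instead separate the spike by Sherman--Morrison, invoke the classical centered-Wigner CLT \cite{Bai2005} for the bulk part, and use isotropic-law concentration of $\frac1N\langle \mathbf{1},(\tfrac{1}{\sqrt N}\Wg-z)^{-1}\mathbf{1}\rangle$ to show the correction term is deterministic to leading order on a contour enclosing only $[-2,2]$ — which is viable (and closer in spirit to how the full CLT of \cite{MR3649446} was proved in the first place). Your route is more self-contained and makes transparent why only analyticity near $[-2,2]$ is needed; the paper's route buys brevity by recycling the deformed-matrix result and never re-deriving the spike's effect on the mean, at the price of the $v_0\to0$ contour estimates.

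One piece of your bookkeeping would need correcting if you carried this out: the classical centered-Wigner mean contains $-\tfrac12\tau_0(\varphi)$, not $-\tfrac32\tau_0(\varphi)$. The extra $-\tau_0(\varphi)$, together with $-\sum_{\ell\ge 1}J^{-\ell}\tau_\ell(\varphi)$, comes from the rank-one correction $-\frac{J s'(z)}{1+Js(z)}$ (the leading term of $-\frac{\dd}{\dd z}\log\bigl(1+\tfrac JN\langle\mathbf{1},(\tfrac{1}{\sqrt N}\Wg-z)^{-1}\mathbf{1}\rangle\bigr)$) after expanding in powers of $(Js(z))^{-1}$ and integrating over the contour; and the term $\varphi(\hat J)$ in $M(\varphi)$ of \eqref{meannoe} is precisely the residue at the zero $z=\hat J$ of $1+Js(z)$, which your contour with $a_+<\hat J$ does not enclose — this is the mechanism behind $M^{(2)}(\varphi)=M(\varphi)-\varphi(\hat J)$ with $V^{(2)}(\varphi)=V(\varphi)$. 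The formulas you target are correct; only the attribution of the pieces to ``centered CLT'' versus ``spike'' needs to be fixed.
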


Note that
\beq
    M^{(2)}(\varphi) = M(\varphi)- \varphi(\hat{J})
\eeq
for $\varphi$ analytic in a neighborhood of $[-2, \hat{J}]$.

\begin{remark} \label{rem:ls_convfcn} 
We comment on a case when the test function depends on $N$. 
Consider the function $\varphi_N$ defined by  
$$
    \varphi_N(x) = \varphi(x) + \frac{\phi(x)}{N} + O(N^{-2})
$$
uniformly for $x$ in a neighborhood of $[-2, 2]$ for analytic functions $\varphi$ and $\phi$.
Define the corresponding linear statistic $\mathcal{N}^{(2)}_N(\varphi_N) = \sum_{i = 2}^N \varphi_N(\eg_i) - N\int_{-2}^2 \varphi_N(x)\scl$, then
\beq
\begin{aligned}
    \mathcal{N}^{(2)}_N(\varphi_N) = & \sum_{i = 2}^N \varphi_N(\eg_i) - N\int_{-2}^2 \varphi_N(x)\scl \\
    = & \mathcal{N}^{(2)}_N(\varphi) + \frac{1}{N}\left(\sum_{i = 2}^N \phi(\eg_i) - N\int \phi(x)\scl\right) +O(\frac{1}{N}).
\end{aligned}
\eeq
By Theorem \ref{conj:1}, the second order term converges to zero in probability. 
Thus, $\mathcal{N}^{(2)}_N(\varphi_N)$ and $\mathcal{N}^{(2)}_N(\varphi)$ converge to the same Gaussian distribution. 
The same argument also applies to full linear statistics; this is used in Remark \ref{re:coro:fluct_ind} below. 
Now, the claim in footnote\footref{ft:mean_shift} is verified by noting that $\varphi(x + \frac{J' - J}{N}) = \varphi(x) + \frac{\varphi'(x)(J' - J)}{N} + O(N^{-2})$. 
\end{remark}

\subsection{Joint convergence of the largest eigenvalue and linear statistics}

By Theorem~\ref{conj:1} and the second part of~\eqref{legld}, the partial linear statistic and the largest eigenvalue each converge to Gaussian distributions individually. 
The following theorem shows that they converge jointly to a bivariate Gaussian with an explicit covariance. 

\begin{theorem}
\label{prop:asy_ind_ls_eg1}
Let $J>1$. 
Then for $\varphi(x)$ which is analytic in a neighborhood of \([-2, 2 ]\),  \(\mathcal{N}_N^{(2)}(\varphi):= \sum_{i = 2}^N \varphi(\lambda_i) - N\int_{-2}^2 \varphi(x)\scl \) and \(\chi_N := \sqrt{N} (\lambda_1-\hat{J}) \) converges jointly in distribution to a bivariate Gaussian variable  
with mean 
\beq
\label{eq:bigau_mean}
    (M^{(2)}(\varphi), W_3(J^{-2} - J^{-4}))
\eeq
and covariance
\beq
\label{eq:bigau_cov}
    \begin{pmatrix}V^{(2)}(\varphi) & 2W_3\tau_2(\varphi)(1 - J^{-2}) \\ 2W_3\tau_2(\varphi)(1 - J^{-2}) & 2(1 - J^{-2}) \end{pmatrix}. 
\eeq
\end{theorem}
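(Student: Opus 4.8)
The plan is to express both coordinates as smooth linear functionals of the resolvent $G(z)=(zI-W)^{-1}$ of the centered Wigner matrix $W=\tfrac1{\sqrt N}\Wg$, and then to run the cumulant (equivalently, characteristic function) expansion already developed for Theorem~\ref{conj:1} on an enlarged family of observables. Throughout write $\hat J=J+J^{-1}$, let $v=\mathbf 1/\sqrt N$ be the (unit) all‑ones direction, and set $m_{sc}(z)=\int(z-x)^{-1}\scl$, so that $\Wgnm=W+Jvv^{\tran}$ and the secular equation $J\,v^{\tran}G(\lambda)v=1$ characterizes the eigenvalues $\lambda$ of $\Wgnm$ outside the spectrum of $W$; in particular $\lambda_1$ is the largest such solution and $Jm_{sc}(\hat J)=1$.

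First I would linearize $\chi_N$. By Lemma~\ref{lemma:rigidity} and the isotropic local law, $\lambda_1-\hat J$ and $v^{\tran}G(z)v-m_{sc}(z)$ are both $O(N^{-1/2})$ with high probability (with the same bound for the $z$‑derivative) uniformly for $z$ at a fixed distance from $[-2,2]$. Taylor expanding $J\,v^{\tran}G(\lambda_1)v=1$ about $z=\hat J$ and using $m_{sc}'(\hat J)=-(J^2-1)^{-1}$ gives $\chi_N=(J^2-1)\sqrt N\bigl(v^{\tran}G(\hat J)v-m_{sc}(\hat J)\bigr)+o(1)$ with high probability. In parallel I would peel $\lambda_1$ off the partial linear statistic: from $\det(zI-\Wgnm)=\det(zI-W)\bigl(1-J\,v^{\tran}G(z)v\bigr)$ one obtains $\tr\varphi(\Wgnm)-\tr\varphi(W)=\tfrac1{2\pi\ii}\oint_\Gamma\varphi(z)\tfrac{\dd}{\dd z}\log(1-Jv^{\tran}G(z)v)\,\dd z$ for $\Gamma$ enclosing both spectra; splitting $\Gamma$ into a small loop around $\lambda_1$, which contributes exactly $\varphi(\lambda_1)$, and a loop $\Gamma_2$ around a fixed neighborhood of $[-2,2]$ yields
\[
  \mathcal N_N^{(2)}(\varphi)=\mathcal L_N(\varphi)+\mathcal R_N(\varphi),\qquad \mathcal L_N(\varphi):=\tr\varphi(W)-N\!\int_{-2}^2\!\varphi(x)\scl,
\]
with $\mathcal R_N(\varphi)=\tfrac1{2\pi\ii}\oint_{\Gamma_2}\varphi(z)\tfrac{\dd}{\dd z}\log(1-Jv^{\tran}G(z)v)\,\dd z$. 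Linearizing $\mathcal R_N(\varphi)$ about $m_{sc}$, its fluctuating part is $O(N^{-1/2})$ and its deterministic part is an explicit constant $c(\varphi)$ (obtained by collapsing $\Gamma_2$ onto $[-2,2]$ and expanding the integrand in powers of $J^{-1}$), so $\mathcal N_N^{(2)}(\varphi)=\mathcal L_N(\varphi)+c(\varphi)+o(1)$ with high probability.

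By these two reductions and Slutsky's theorem, it then suffices to prove that $\bigl(\mathcal L_N(\varphi),\ \sqrt N(v^{\tran}G(\hat J)v-m_{sc}(\hat J))\bigr)$ converges to a bivariate Gaussian and to identify its limiting covariance. The two marginal limits are the known ones (Theorem~\ref{conj:1} for the first coordinate, the second line of~\eqref{legld} for $\chi_N$), and Gaussianity of an arbitrary real linear combination follows by running the cumulant expansion of Section~\ref{sec:linear_statistics} with the enlarged observable vector $(\tr\varphi(W),\{v^{\tran}G(z)v\}_z)$: all joint cumulants of order $\ge3$ carry extra negative powers of $N$ and vanish. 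Hence the only genuinely new quantity is $\lim_{N\to\infty}\sqrt N\,\operatorname{Cov}\bigl(\tr G(z),v^{\tran}G(w)v\bigr)$ for $z$ near $[-2,2]$ and $w=\hat J$.

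Computing this covariance is what I expect to be the main obstacle. Its second‑cumulant (``Gaussian'') part is $O(N^{-1})$ and hence negligible after the $\sqrt N$ scaling --- morally because $\tr G$ sees eigenvalue positions while $v^{\tran}Gv$ sees eigenvector overlaps, which decouple for the Gaussian ensemble; this matches the vanishing of the off‑diagonal covariance when $W_3=0$. The surviving contribution comes from the third cumulant $\kappa_3(\Wg_{ij})=W_3$: with $\partial_{\Wg_{ij}}\tr G=\tfrac2{\sqrt N}(G^2)_{ij}$ and $\partial_{\Wg_{ij}}(v^{\tran}Gv)=\tfrac2{N\sqrt N}(G\mathbf1)_i(G\mathbf1)_j$, the relevant third‑order term $\propto\sum_{i<j}\kappa_3(\Wg_{ij})\,\E[\partial_{\Wg_{ij}}^2\tr G(z)]\,\E[\partial_{\Wg_{ij}}(v^{\tran}G(w)v)]$ evaluates, after the local law replaces diagonal resolvent entries by $m_{sc}$, to a fixed constant multiple of $W_3\,m_{sc}(w)^2\,\tfrac{\dd}{\dd z}(m_{sc}(z)^2)$; all remaining third‑order pieces and all higher cumulants are smaller. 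Integrating against $\varphi$ over $\Gamma$, collapsing onto $[-2,2]$, and using $m_{sc}(2\cos\theta\pm\ii0)=e^{\mp\ii\theta}$ one finds $\tfrac1{2\pi\ii}\oint\varphi(z)\tfrac{\dd}{\dd z}m_{sc}(z)^2\,\dd z=-2\tau_2(\varphi)$; combining this with $m_{sc}(\hat J)^2=J^{-2}$ and the factor $(J^2-1)$ from the linearization of $\chi_N$ produces exactly the off‑diagonal entry $2W_3\tau_2(\varphi)(1-J^{-2})$ of~\eqref{eq:bigau_cov} (and the companion identity $\lim\sqrt N\,\Var(v^{\tran}G(\hat J)v)=2J^{-2}(J^2-1)^{-1}$, checked by the same method, reproduces $\Var(\chi_N)=2(1-J^{-2})$). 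The hard part will be precisely that, since the leading order vanishes, one must carry the cumulant expansion to third order, carefully track which resolvent subexpressions survive the local‑law substitutions, and control all remainders --- including the $o(1)$ errors in the two reductions above --- uniformly in $z$ on the contour; once that bookkeeping is in place, the stated bivariate Gaussian limit follows.
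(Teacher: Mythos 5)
Your proposal is correct in outline but takes a genuinely different route from the paper, so let me compare. Your linearization of $\chi_N$ through the secular equation and the isotropic law is essentially the paper's \eqref{eq:fluct_1_quad}, but your handling of the partial linear statistic is different: you peel off $\varphi(\lambda_1)$ via the determinant identity $\det(zI-\Wgnm)=\det(zI-W)\bigl(1-Jv^{\tran}G(z)v\bigr)$ and absorb the remaining rank-one effect into a deterministic constant $c(\varphi)$, so that only the \emph{centered} linear statistic $\mathcal L_N(\varphi)$ fluctuates, whereas the paper keeps the deformed-matrix statistic $\xi_N(z)$ (whose CLT is already Theorem \ref{conj:1}) and proves joint convergence with $n_N=\sqrt N\bigl(\rWg_{\bse\bse}(\hat J)+J^{-1}\bigr)$. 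More importantly, the paper proves joint Gaussianity in two stages: first for GOE, where orthogonal invariance lets one replace $\frac{J}{N}\mathbf 1\mathbf 1^{\tran}$ by $\diag(J,0,\dots,0)$ and a Schur complement exhibits $\chi_N$ through a quadratic form $Y^*\hat{G}Y$ with $Y$ Gaussian and independent of the bulk spectrum (Proposition \ref{prop:fluct_ind}), so the vanishing of the Gaussian-part covariance is structural; second, an interpolation $\Wgint(t)=\sWg\cos t+\sGOE\sin t$ with a conditioned Stein/cumulant expansion transports this to general entries and generates the $W_3$ mean and covariance terms. You instead propose a single direct cumulant computation for the enlarged observable family on the centered matrix, checking by hand that the second-cumulant covariance is $O(N^{-1})$ and that the surviving $O(1)$ coupling comes from $\kappa_3=W_3$; your identities $m_{sc}(2\cos\theta\pm\ii 0)=e^{\mp\ii\theta}$, $\frac1{2\pi\ii}\oint\varphi(z)\,\frac{\dd}{\dd z}m_{sc}(z)^2\,\dd z=-2\tau_2(\varphi)$, $m_{sc}(\hat J)^2=J^{-2}$ and the factor $J^2-1$ from the linearization are correct and consistent with the paper's limiting covariance $-2W_3s_1s_1's_2^2$ for $(\xi_N,n_N)$, hence with \eqref{eq:bigau_cov}. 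What the paper's organization buys is that asymptotic independence in the Gaussian case comes for free from invariance, so only the deviation from GOE needs expanding; what your route buys is that no reference ensemble is needed and the rank-one deformation is disposed of once and for all in the deterministic shift $c(\varphi)$. The one place where your sketch is genuinely thin is the step you yourself flag: the assertion that all joint cumulants of order at least $3$ of $\bigl(\mathcal L_N(\varphi),\sqrt N(v^{\tran}G(\hat J)v-m_{sc}(\hat J))\bigr)$ vanish and that all other third-order allocations are negligible is exactly the bookkeeping the paper devotes Section \ref{sec:asy_gau} to (a priori bounds on resolvent entries and row sums at the real point $\hat J$, conditioning on a high-probability event so the Stein expansion is legitimate, and the estimates of $I_1-I_1^G$, $I_2$, $I_3$), and it must be carried out, not just announced, for the proof to be complete. (Minor slip: your companion identity should read $\lim_{N\to\infty} N\Var\bigl(v^{\tran}G(\hat J)v\bigr)=2J^{-2}(J^2-1)^{-1}$, not $\sqrt N\,\Var$.)
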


The proof of this theorem, given in Section \ref{sec:asy_gau}, is the main technical part of this paper.
We prove the theorem first for the Gaussian case, and then use an interpolation argument.

\subsection{Proof of Theorem \ref{thm:fluc_free_energy_para_ferro}} \label{sec:pfofth}

We now derive Theorem \ref{thm:fluc_free_energy_para_ferro} from Theorem \ref{thm:free_energy_para_ferro} using the results on the eigenvalues stated in the previous two subsections. 
The term $\fe(\chi_N)$ converges to $\fe(\gt)$ in distribution from Theorem~\ref{prop:asy_ind_ls_eg1}.  
Consider the rest. 
It was shown in (A.5) of \cite{MR3554380} that for $g(z) = \log(J+J^{-1}-z)$, 
\begin{equation}
\label{eq:mean_ls_log}
    \int g(z) \scl = \frac{1}{2J^2} + \log J. 
\end{equation}
Inserting $2\beta= J^{-1}+ \bnp N^{-1/2}$ and using the Taylor expansion \(\log(1 + \frac{\bnp J}{\sqrt{N}}) = \frac{\bnp J}{\sqrt{N}} - \frac{\bnp^2 J^2}{2N} + O(N^{-3/2})\), 
\beq
\label{eq:tf_ls}
    \tF_N - \frac12 \int g(z) \scl 
    = \frac{1}{4J^2} + \frac{\bnp}{2J\sqrt{N}} + \frac{\log N}{4N}
    + \frac1{N}  \left[ \frac{\bnp^2 J^2}{4} + \log \frac1{2\sqrt{\pi}J} \right] + O(N^{-3/2}). 
\eeq
We can evaluate $M^{(2)}(g)$ using (2.7) of \cite{MR3649446} which evaluated the $M(h)$ with $h(x)=\log(2\beta+\frac1{2\beta}-x)$: (note that $J'=J$ here)
\begin{equation}
    M^{(2)}(g) = \lim_{\beta \rightarrow \frac{1}{2J}} \left(M(h) - \log(2\beta+ \frac1{2\beta}- J-J^{-1})\right) 
    = -\frac12 \log(J^2-1)  - \frac{w_2-2}{2J^2} - \frac{W_4-3}{4J^4}. 
\end{equation}
The variance \(V^{(2)}(g)= V(g)\), which is independent of \(J\), is given by $4$ times (3.13) of \cite{MR3554380} if we replace $2\beta$ by $J^{-1}$: 
\beq
    V^{(2)}(g)= -2  \log(1-J^{-2}) + \frac1{J^{2}} (w_2-2)+ \frac1{2J^{4}} (W_4-3).
\eeq
For the covariance term, we have \(\tau_2(g) = - \frac{1}{2J^2}\) from (A.17) of \cite{MR3554380}. Hence, from Theorem \ref{conj:1} and \ref{prop:asy_ind_ls_eg1}, we obtain the result. 


\section{Proof of Theorem \ref{thm:free_energy_para_ferro}} 
\label{sec:free_energy}

The proof follows the steps for the proof of the Theorem 1.5 of \cite{MR3649446} for paramagnetic and ferromagnetic regimes with necessary adjustments. 
The analysis is based on applying a method of steepest-descent to a random integral. 
The location of the critical point is important. 
In the transitional regime, the critical point is close to the largest eigenvalue but not as close as the ferromagnetic case. 
On the other hand, the critical point is away from the largest eigenvalue in the paramagnetic case. See Subsection~\ref{subsec:steep_descent} below for details. 

\subsection{Preliminaries} 

The following formula is a simple result in \cite{Kosterlitz1976}. 

\begin{lemma}[\cite{Kosterlitz1976}; also Lemma 1.3 of \cite{MR3554380}]
\label{lemma:laplace_tran}
Let \(\Wgnm\) be a real \(N \times N\) symmetric matrix with eigenvalue \(\eg_1 \geq \eg_2 \geq \cdots \geq \eg_N\). Then for fixed \(\beta > 0\),
\begin{equation}
\label{eq:sph_line}
	\int_{S_{N-1}} e^{\beta  \sphv^T \Wgnm\sphv } \dd w_N(\sphv) = C_N \int_{\gamma - i\infty}^{\gamma +i\infty} e^{\frac{N}{2}G(z)}\dd z, \qquad G(z) = 2\beta  z - \frac{1}{N}\sum_{i =1}^N \log(z - \eg_i),
\end{equation}
where \(\gamma\) is any constant satisfying \(\gamma > \eg_1\), the integration contour is the vertical line from \(\gamma - \ii\infty\) to \(\gamma + \ii\infty\), the \(\log\) function is defined in the principal branch, and 
\begin{equation}
	C_N = \frac{\Gamma(N/2)}{2\pi \ii (N\beta)^{N/2 - 1}}.
\end{equation}
Here \(\Gamma(z)\) denotes the Gamma function.
\end{lemma}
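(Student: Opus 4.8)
\emph{Proof proposal.} The plan is to enforce the spherical constraint $|\sphv|^2 = N$ by a Fourier--Laplace representation of the Dirac delta, thereby reducing the integral over $S_{N-1}$ to a Gaussian integral over $\R^N$ that evaluates in closed form and leaves a single one-dimensional contour integral.

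First I would exploit the rotation invariance of $\omega_N$: since $\Wgnm$ is symmetric, write $\Wgnm = O^\tran \diag(\eg_1, \dots, \eg_N) O$ with $O$ orthogonal, and change variables $\sphv \mapsto O^\tran \sphv$ so that $\sphv^\tran \Wgnm \sphv$ becomes $\sum_{i=1}^N \eg_i \sphv_i^2$. Next I would pass from the normalized surface measure on the sphere of radius $\sqrt N$ to Lebesgue measure on $\R^N$: writing $|S^{N-1}| = 2\pi^{N/2}/\Gamma(N/2)$ for the area of the unit sphere and using polar coordinates together with $\delta(r^2 - N) = \tfrac{1}{2\sqrt N}\,\delta(r - \sqrt N)$, one obtains
\beqq
    \int_{S_{N-1}} e^{\beta \sum_i \eg_i \sphv_i^2}\, \dd\omega_N(\sphv) = \frac{\Gamma(N/2)}{\pi^{N/2}\, N^{N/2-1}} \int_{\R^N} e^{\beta \sum_i \eg_i \sphv_i^2}\, \delta\!\left(|\sphv|^2 - N\right) \dd\sphv .
\eeqq
I would then insert $\delta(|\sphv|^2 - N) = \tfrac{1}{2\pi\ii}\int_{\gamma'-\ii\infty}^{\gamma'+\ii\infty} e^{-w(|\sphv|^2 - N)}\,\dd w$, choosing the abscissa $\gamma' > \beta \eg_1$ so that the quadratic form $\sum_i (w - \beta\eg_i)\sphv_i^2$ has positive real part along the contour, and perform the Gaussian integral $\int_{\R^N} e^{-\sum_i (w - \beta\eg_i)\sphv_i^2}\,\dd\sphv = \pi^{N/2}\prod_{i=1}^N (w - \beta\eg_i)^{-1/2}$ (principal branch). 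Finally the substitution $w = \beta z$, for which $\gamma := \gamma'/\beta$ is an arbitrary constant exceeding $\eg_1$, turns $w - \beta\eg_i$ into $\beta(z - \eg_i)$ and $e^{wN}$ into $e^{N\beta z}$; after collecting the powers of $\beta$ and $N$ the prefactor becomes exactly $C_N = \Gamma(N/2)/(2\pi\ii\,(N\beta)^{N/2-1})$, while the integrand becomes $\exp\{N\beta z - \tfrac12 \sum_i \log(z - \eg_i)\} = e^{\frac N2 G(z)}$, which yields \eqref{eq:sph_line}.

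The only point that genuinely requires care --- and the one I expect to be the main obstacle --- is the interchange of the contour integral in $w$ with the integral over $\R^N$, since the delta representation above is merely a distributional identity. I would make this rigorous by working instead with the Laplace transform in the squared radius: for real $t > \beta\eg_1$, Gaussian integration gives $\int_{\R^N} e^{-t|\sphv|^2 + \beta\sum_i \eg_i \sphv_i^2}\,\dd\sphv = \pi^{N/2}\prod_{i=1}^N (t - \beta\eg_i)^{-1/2}$, while polar coordinates and the change of variable $u = r^2$ express the same quantity as $\tfrac12 \int_0^\infty e^{-tu}\, u^{N/2-1}\Psi(u)\,\dd u$ with $\Psi(u) := \int_{S^{N-1}} e^{\beta u \sum_i \eg_i \bsu_i^2}\,\dd\bsu$ (and, after diagonalizing, $\int_{S_{N-1}} e^{\beta \sphv^\tran \Wgnm\sphv}\,\dd\omega_N(\sphv) = \Psi(N)/|S^{N-1}|$). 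Thus $u^{N/2-1}\Psi(u)$ has the explicit Laplace transform $2\pi^{N/2}\prod_i(t-\beta\eg_i)^{-1/2}$; since this transform decays like $|t|^{-N/2}$ --- hence is absolutely integrable on vertical lines once $N \geq 3$ --- and $\Psi(u) \leq |S^{N-1}|\, e^{\beta\eg_1 u}$ for $u \geq 0$, the Bromwich inversion formula applies and recovers $u^{N/2-1}\Psi(u)$ pointwise for $u > 0$. Evaluating at $u = N$ and substituting $t = \beta z$ then produces \eqref{eq:sph_line} with every manipulation justified; the degenerate low-dimensional cases $N = 1, 2$, where the contour integral is only conditionally convergent, can be checked directly, and in any event the formula is applied only for large $N$ in what follows.
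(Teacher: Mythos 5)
Your proof is correct, and it follows essentially the same route as the source the paper relies on (the lemma is quoted from Kosterlitz--Thouless--Jones and Lemma 1.3 of \cite{MR3554380}, not reproved here): diagonalize, represent the spherical constraint by a Laplace/Bromwich integral, perform the Gaussian integral, and rescale $w=\beta z$, with the constants working out exactly to $C_N$. Your justification of the inversion via the explicit Laplace transform of $u^{N/2-1}\Psi(u)$ (absolutely convergent on vertical lines for $N\ge 3$) is a sound way to make the distributional delta-function step rigorous, and the $N=1,2$ caveat is harmless since the formula is only used for large $N$.
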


Let $\Wgnm$ be a Wigner matrix with non-zero mean as in \eqref{H_ssk+cw}.
Then its eigenvalues $\lambda_i$ are random variables, and hence the above result gives a random integral representation of the partition function. 
In \cite{MR3649446, MR3554380}, the above random integral was evaluated using the method of steepest-descent for different choices of random matrices. 
The key ingredient in controlling the error term is a precise estimate for the eigenvalues which are obtained in the random matrix theory.

\begin{lemma}[Rigidity of eigenvalues: Theorem 2.13 of \cite{MR3098073} and Theorem 6.3 of \cite{KnowlesYin2011}]
\label{lemma:rigidity}
For each positive integer \(k \in [1, N]\), set \(\hat{k} := \min\{k, N + 1- k\}\). Let \(\gamma_k\) be the classical location defined by
\begin{equation}
	\int_{\gamma_k}^\infty \scl = \frac{1}{N}\left(k - \frac{1}{2}\right).
\end{equation}
Then, for every \(0< \epsilon < \frac1{2} \),
\begin{equation}
\label{eq:lemma:rigidity_1}
	\lvert\eg_k - \gamma_k \rvert \leq \hat{k}^{-1/3}N^{-2/3 + \epsilon}
\end{equation}
for all \(k = 2, 3, \cdots, N\) with high probability. Furthermore, for fixed \(J > 1\), recall $\hat{J} = J + J^{-1}$,
\begin{equation} \label{rigego}
	\lvert \eg_1 - \hat{J} \rvert \leq N^{-1/2 + \epsilon}
\end{equation}
holds with high probability.
\end{lemma}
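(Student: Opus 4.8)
The two assertions are of different nature, and I would establish them separately, in each case reducing to a standard result for ordinary Wigner matrices. Throughout, write $\Wgnm = W + \frac{J}{N}\mathbf{1}\mathbf{1}^T$, where $W = \frac{1}{\sqrt N}\Wg$ is an ordinary normalized real Wigner matrix; its limiting spectral distribution is the semicircle law, so its classical locations coincide with the $\gamma_k$ above. Let $\mu_1\ge\mu_2\ge\cdots\ge\mu_N$ denote the eigenvalues of $W$.

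\textbf{The bulk/edge estimate~\eqref{eq:lemma:rigidity_1}.} Here no probabilistic information about the spike is needed. The perturbation $\frac{J}{N}\mathbf{1}\mathbf{1}^T$ is positive semidefinite of rank one, so Cauchy interlacing gives, deterministically,
\begin{equation*}
	\mu_k \le \eg_k \le \mu_{k-1}, \qquad k = 2, 3, \ldots, N.
\end{equation*}
The rigidity estimate for $W$, which is Theorem 2.13 of~\cite{MR3098073}, asserts $\lvert \mu_k - \gamma_k\rvert \le \hat{k}^{-1/3}N^{-2/3+\epsilon/2}$ for all $k$ with high probability. Combining this with the elementary spacing bound $\lvert\gamma_{k-1}-\gamma_k\rvert \le C\hat{k}^{-1/3}N^{-2/3}$ — which follows from $2-\gamma_k \asymp (\hat{k}/N)^{2/3}$ near the edges and $\gamma_{k-1}-\gamma_k \asymp N^{-1}$ in the bulk — and with $\widehat{k-1}^{-1/3}\le C\hat{k}^{-1/3}$ for $k\ge 2$, the interlacing inequalities yield $\lvert\eg_k-\gamma_k\rvert \le \hat{k}^{-1/3}N^{-2/3+\epsilon}$ for all $k\ge 2$ with high probability, after absorbing constants and $\epsilon/2$ into $\epsilon$ for $N$ large.

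\textbf{The outlier estimate~\eqref{rigego}.} Since $J>1$, the BBP phase transition forces exactly one eigenvalue of $\Wgnm$ out of the bulk to a neighborhood of $\hat{J}=J+J^{-1}>2$, and the sharp localization $\lvert\eg_1-\hat{J}\rvert\le N^{-1/2+\epsilon}$ is precisely Theorem 6.3 of~\cite{KnowlesYin2011}, which I would cite directly. For orientation, the mechanism is this: from the rank-one update identity $\det(z-\Wgnm) = \det(z-W)\bigl(1-\tfrac{J}{N}\mathbf{1}^T(z-W)^{-1}\mathbf{1}\bigr)$, the eigenvalue $\eg_1$ is, with high probability, the unique solution $z>2$ bounded away from $2$ of the secular equation $1 = \tfrac{J}{N}\mathbf{1}^T(z-W)^{-1}\mathbf{1}$. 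The isotropic local semicircle law for $W$ gives $\tfrac{1}{N}\mathbf{1}^T(z-W)^{-1}\mathbf{1} = -m_{\mathrm{sc}}(z) + O(N^{-1/2+\epsilon})$ uniformly for $z$ in a fixed neighborhood of $\hat{J}$, where $m_{\mathrm{sc}}(z) = \tfrac{-z+\sqrt{z^2-4}}{2}$ (the branch with $m_{\mathrm{sc}}(z)\to 0$ as $z\to\infty$) solves $m_{\mathrm{sc}}^2 + z\,m_{\mathrm{sc}} + 1 = 0$. Hence $\eg_1$ satisfies $-J\,m_{\mathrm{sc}}(\eg_1) = 1 + O(N^{-1/2+\epsilon})$; the unperturbed equation $-J\,m_{\mathrm{sc}}(z)=1$ has the unique root $z=\hat{J}$ in $(2,\infty)$, and $\tfrac{d}{dz}\bigl(-J\,m_{\mathrm{sc}}(z)\bigr)\big|_{z=\hat{J}} = -J/(J^2-1)\ne 0$ since $\hat{J}$ lies strictly outside $[-2,2]$; the implicit function theorem then converts the $O(N^{-1/2+\epsilon})$ perturbation of the equation into $\lvert\eg_1-\hat{J}\rvert\le N^{-1/2+\epsilon}$ with high probability. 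One also checks that the next root of the secular equation lies in the bulk, which rules out a second outlier and is in any case consistent with $\eg_2\le\mu_1\le 2+N^{-2/3+\epsilon}$ from interlacing.

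\textbf{Main obstacle.} The only delicate input is the one behind~\eqref{rigego}: controlling the quadratic form $\mathbf{1}^T(z-W)^{-1}\mathbf{1}$ with the sharp error $O(N^{1/2+\epsilon})$ requires the isotropic (anisotropic) local law for Wigner matrices, together with a careful analysis of the secular equation near the regular point $\hat{J}$; all of this is packaged in~\cite{KnowlesYin2011}. By contrast, the bulk statement~\eqref{eq:lemma:rigidity_1} for $\Wgnm$ comes for free from the standard rigidity of $W$ via deterministic interlacing, so no genuinely new work is needed there.
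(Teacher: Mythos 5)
Your proposal is correct and takes essentially the same route as the paper, which states this lemma by direct citation of Theorem 2.13 of \cite{MR3098073} and Theorem 6.3 of \cite{KnowlesYin2011} without giving further proof. The bridging steps you supply --- Cauchy interlacing for the rank-one positive semidefinite perturbation to transfer bulk rigidity from the unperturbed Wigner matrix to $M$ (using the spacing bound $\gamma_{k-1}-\gamma_k = O(\hat{k}^{-1/3}N^{-2/3})$), and the secular-equation/isotropic local law mechanism behind the outlier location $\lvert\eg_1-\hat{J}\rvert\le N^{-1/2+\epsilon}$ --- are exactly the standard arguments implicit in those citations, and you carry them out correctly.
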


From the rigidity, it is easy to obtain the following law of large numbers for eigenvalues. 

\begin{corollary}[c.f. Lemma 5.1 of \cite{MR3554380}]
\label{coro:law_of_large_number}
Fix $\delta > 0$, let \(\{f_\alpha\}_{\alpha \in I} \subset C^1[-2-\delta, 2 + \delta]\) be a family of monotonic increasing functions satisfying  \(\sup_{\alpha \in I} \max_x|f_\alpha(x)| \leq C_0\) and \(\sup_{\alpha \in I} \max_x|f_\alpha'(x)| \leq C_1\). Then, for every $0 < \epsilon < 1$, 
\begin{equation}
\label{eq:coro:law_of_large_number_2}
	\sup_{\alpha \in I}\left \vert \frac{1}{N}\sum_{i = 2}^N f_\alpha(\eg_i) - \int_{-2}^2 f_\alpha(x)\scl \right \vert 
	= O(N^{-1 + \epsilon})
\end{equation}
with high probability. 
\end{corollary}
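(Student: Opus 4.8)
The plan is to deduce the estimate from the eigenvalue rigidity of Lemma~\ref{lemma:rigidity} in two steps: first replace each eigenvalue $\eg_i$ with $i\ge 2$ by its classical location $\gamma_i$, and then compare the Riemann-type sum $\frac1N\sum_{i\ge 2} f_\alpha(\gamma_i)$ with the integral $\int_{-2}^2 f_\alpha(x)\scl$. I would begin by recording why the sum starts at $i=2$: the top eigenvalue $\eg_1$ lies near $\hat{J}=J+J^{-1}$, which may sit outside the domain $[-2-\delta,2+\delta]$ of the $f_\alpha$, whereas for $i\ge 2$ rigidity gives $|\eg_i-\gamma_i|\le \hat{i}^{-1/3} N^{-2/3+\e}$ with $\gamma_i\in[-2,2]$, so both $\eg_i$ and $\gamma_i$ lie in $[-2-\delta,2+\delta]$ for all large $N$ and $f_\alpha(\eg_i)$, $f_\alpha'(\eg_i)$ are defined. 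Since every bound below depends on $f_\alpha$ only through the constants $C_0$ and $C_1$, uniformity over $\alpha\in I$ is automatic.

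For the first step, on the high-probability event of Lemma~\ref{lemma:rigidity} the mean value theorem and $|f_\alpha'|\le C_1$ give
\[
    \Big|\frac1N\sum_{i=2}^N f_\alpha(\eg_i) - \frac1N\sum_{i=2}^N f_\alpha(\gamma_i)\Big|
    \le \frac{C_1}{N}\sum_{i=2}^N|\eg_i-\gamma_i|
    \le \frac{C_1 N^{-2/3+\e}}{N}\sum_{i=2}^N \hat{i}^{-1/3},
\]
and since $\sum_{i=2}^N \hat{i}^{-1/3}=O(N^{2/3})$ the right-hand side is $O(C_1 N^{-1+\e})$.

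For the second step, from $\int_{\gamma_k}^\infty\scl = \frac1N(k-\tfrac12)$ one has $\int_{\gamma_{k+1}}^{\gamma_k}\scl=\frac1N$ for $2\le k\le N-1$; because $f_\alpha$ is monotone, $\int_{\gamma_{k+1}}^{\gamma_k} f_\alpha(x)\scl$ lies between $\frac1N f_\alpha(\gamma_{k+1})$ and $\frac1N f_\alpha(\gamma_k)$, so telescoping yields
\[
    \Big|\frac1N\sum_{k=2}^{N-1} f_\alpha(\gamma_k) - \int_{\gamma_N}^{\gamma_2} f_\alpha(x)\scl\Big|
    \le \frac1N\big|f_\alpha(\gamma_2)-f_\alpha(\gamma_N)\big| \le \frac{2C_0}{N}.
\]
The leftover term $\frac1N f_\alpha(\gamma_N)$ is $O(C_0/N)$, and the $\sigma_{scl}$-mass of $[-2,\gamma_N]$ and of $[\gamma_2,2]$ is $\frac1{2N}$ and $\frac3{2N}$ respectively, so $\big|\int_{-2}^{\gamma_N} f_\alpha\,\scl\big|+\big|\int_{\gamma_2}^{2} f_\alpha\,\scl\big|=O(C_0/N)$; hence $\big|\frac1N\sum_{i=2}^N f_\alpha(\gamma_i)-\int_{-2}^2 f_\alpha(x)\scl\big|=O(C_0/N)$. (One could instead bound this step by $|f_\alpha'|\le C_1$ together with $\sum_k|\gamma_k-\gamma_{k+1}|\le 4$; monotonicity simply gives the cleanest route.) Adding the two steps gives the claimed $O(N^{-1+\e})$ bound, uniformly in $\alpha$.

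The argument is essentially that of Lemma 5.1 of \cite{MR3554380}, and I expect no serious obstacle; the only point needing care is the spectral edge, where rigidity degrades to scale $N^{-2/3}$. This is exactly absorbed by the sharpened bound $|\eg_i-\gamma_i|\le \hat{i}^{-1/3}N^{-2/3+\e}$, whose extra factor $\hat{i}^{-1/3}$ makes $\sum_i \hat{i}^{-1/3}=O(N^{2/3})$ rather than $O(N)$; and by excluding the outlier $\eg_1$, which is precisely why only the rigidity of the remaining eigenvalues from Lemma~\ref{lemma:rigidity} is invoked.
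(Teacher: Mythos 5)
Your proof is correct and follows essentially the same route as the paper: eigenvalue rigidity (Lemma~\ref{lemma:rigidity}) plus $|f_\alpha'|\le C_1$ to replace $\eg_i$ by $\gamma_i$ at cost $O(N^{-1+\epsilon})$, then monotonicity to compare the Riemann-type sum over classical locations with the semicircle integral at cost $O(N^{-1})$, with uniformity in $\alpha$ because only $C_0,C_1$ enter. The only difference is cosmetic bookkeeping in the second step (you sandwich using the $\gamma_k$ themselves and the exact masses $\tfrac1{2N},\tfrac3{2N}$ at the edges, while the paper introduces auxiliary quantiles $\hat\gamma_j$), which changes nothing of substance.
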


\begin{proof}
Let $f=f_\alpha$ for some $\alpha\in I$. 
The absolute value on the left hand-side is bounded above by 
\begin{equation}
\begin{aligned}
	\left\vert \frac{1}{N} \sum_{i = 2}^N f(\lambda_i) - \frac{1}{N} \sum_{i = 2}^N f(\gamma_i) \right \rvert + \left \vert \frac{1}{N}\sum_{i = 2}^N f(\gamma_i) -  \int_{-2}^2 f(x)\scl \right \vert.
\end{aligned}
\end{equation} 
By Lemma~\ref{lemma:rigidity}, 
\begin{equation}
	\left \vert \frac{1}{N} \sum_{i = 2}^N (f(\lambda_i) - f(\gamma_i))\right\vert 
	\leq \frac{\max|f'(x)|}{N}\sum_{i = 2}^N |\lambda_i - \gamma_i| 
	\leq \frac{C_0}{ N^{1-\epsilon}}
\end{equation}
with high probability. On the other hand, set \(\hat{\gamma}_j\) by
\begin{equation}
	\int_{\hat{\gamma}_j}^2 \scl = \frac{j}{N}, \qquad j = 1, 2, \cdots,N,
\end{equation}
and by convention \(\hat{\gamma}_0 = 2\). As \(f(x)\) is a monotonic increasing function, for \(i = 2, 3, \cdots, N-1\),
\begin{equation}
	\int_{\hat{\gamma}_{i + 1}}^{\hat{\gamma}_i} f(x)\scl \leq \frac{1}{N}f(\gamma_i) \leq \int_{\hat{\gamma}_{i-1}}^{\hat{\gamma}_{i-2}} f(x)\scl.
\end{equation}
Thus, 
\begin{equation}
	\left\vert \frac{1}{N}\sum_{i = 2}^N f(\gamma_i) - \int_{-2}^2 f(x)\scl \right \vert \leq \frac{3\max|f(x)|}{N} \le \frac{3C_1}{N}.
\end{equation}
Since the upper bounds are independent of $f$, we obtain the result. 
\end{proof}

\subsection{Steepest-descent analysis}
\label{subsec:steep_descent}

We now apply steepest descent analysis to the integral in Lemma \ref{eq:sph_line}. 
We deform the contour to pass a critical point and show that the main contribution to the integral comes from a small neighborhood of the critical point. 
For \(G(z)\) given in \eqref{eq:sph_line}, it is easy to check that all solutions of $G'(z)=0$ are  real-valued, and there is a unique critical point $\gamma$ which lies in the interval  \( (\eg_1,\infty)\) (see Lemma 4.1 of \cite{MR3649446}).

Note that since $G$ is random, the critical point is also random. 
For the paramagnetic regime, it was shown in \cite{MR3649446} that $\gamma-\eg_1=O(1)$ with high probability. 
In the same paper, it was also shown that in the ferromagnetic regime, $\gamma-\eg_1=O(N^{-1 + \epsilon})$ with high probability. 
The following lemma establishes a corresponding result for the transitional regime; it shows that $\gamma-\eg_1=O(N^{-\frac{1}{2} + \epsilon})$ with high probability.

\begin{lemma}[Critical point]
\label{lemma:gamma_lambda_1}
Recall that (see \eqref{eq:beta_N}) \(J > 1\) is fixed and $2\beta =2\beta_N= \frac{1}{J} + \frac{\bnp}{\sqrt{N}}$ with fixed $\bnp \in \R$. 
Then, for every $0<\epsilon<\frac14$, 
\beq
\label{eq:gamma_lambda_1}
	\gamma= \eg_1 + \frac1{2\sqrt{N}} \left( -\chi_N - \bnp(J^2 - 1) + \sqrt{(\chi_N + (J^2 - 1)\bnp)^2 + 4(J^2 - 1)}
	\right) + O(N^{-1 + \epsilon})
\eeq
with high probability, where we set $\chi_N := \sqrt{N}(\eg_1-\hat{J})$.
\end{lemma}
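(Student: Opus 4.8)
The plan is to analyze the critical point equation $G'(\gamma) = 0$ directly, where $G(z) = 2\beta z - \frac1N \sum_{i=1}^N \log(z - \lambda_i)$, so that $G'(z) = 2\beta - \frac1N \sum_{i=1}^N \frac{1}{z - \lambda_i}$. Writing $\gamma = \lambda_1 + \frac{\delta}{\sqrt N}$ with $\delta > 0$ to be determined, I would split the sum into the contribution from the top eigenvalue, $\frac1N \cdot \frac{1}{\gamma - \lambda_1} = \frac{1}{\sqrt N \delta}$, and the contribution from the bulk eigenvalues $\frac1N \sum_{i=2}^N \frac{1}{\gamma - \lambda_i}$. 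For the latter, since $\gamma$ is within $O(N^{-1/2+\epsilon})$ of $\hat J = J + J^{-1}$, which sits at distance $J - J^{-1} > 0$ from the edge $2$ of the bulk spectrum, the function $x \mapsto \frac{1}{\gamma - x}$ is smooth and bounded on a neighborhood of $[-2,2]$; by Corollary~\ref{coro:law_of_large_number} (the law of large numbers for $\lambda_2, \dots, \lambda_N$) it is well-approximated, up to $O(N^{-1+\epsilon})$, by $\int_{-2}^2 \frac{1}{\gamma - x}\, \scl$, which is the Stieltjes transform of the semicircle law evaluated at $\gamma$.

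Next I would use the explicit formula for the semicircle Stieltjes transform: $\int_{-2}^2 \frac{\scl}{\gamma - x} = \frac{\gamma - \sqrt{\gamma^2 - 4}}{2} =: m_{sc}(\gamma)$ for $\gamma > 2$. Since $\gamma = \hat J + O(N^{-1/2+\epsilon})$, I would Taylor expand $m_{sc}$ around $\gamma = \hat J$; note the convenient identity $m_{sc}(\hat J) = \frac1J$ (since $\hat J - \sqrt{\hat J^2 - 4} = (J + J^{-1}) - (J - J^{-1}) = 2/J$) and $m_{sc}'(\hat J) = \frac{1}{J^2 - 1}$ after a short computation. Writing $\gamma - \hat J = \frac{1}{\sqrt N}(\chi_N + \delta)$ where $\chi_N = \sqrt N(\lambda_1 - \hat J)$, this gives $\frac1N\sum_{i=2}^N \frac{1}{\gamma - \lambda_i} = \frac1J + \frac{\chi_N + \delta}{\sqrt N (J^2-1)} + O(N^{-1+\epsilon})$ (the higher-order Taylor terms and the error from Corollary~\ref{coro:law_of_large_number} both being $O(N^{-1+\epsilon})$, using that $\chi_N = O(N^{\epsilon})$ by the rigidity estimate~\eqref{rigego}).

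Then I would plug everything into $G'(\gamma) = 0$, i.e. $2\beta = \frac{1}{\sqrt N \delta} + \frac1J + \frac{\chi_N + \delta}{\sqrt N(J^2-1)} + O(N^{-1+\epsilon})$, and substitute $2\beta = \frac1J + \frac{B}{\sqrt N}$. The $\frac1J$ terms cancel, and after multiplying through by $\sqrt N$ I obtain, to leading order, $B = \frac1\delta + \frac{\chi_N + \delta}{J^2 - 1} + O(N^{-1/2+\epsilon})$, which is a quadratic equation in $\delta$: $\delta^2 + (\chi_N + (J^2-1)B)\,\delta - (J^2 - 1) = 0$ up to $O(N^{-1/2+\epsilon})$ corrections. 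Solving for the positive root $\delta = \frac{-(\chi_N + (J^2-1)B) + \sqrt{(\chi_N + (J^2-1)B)^2 + 4(J^2-1)}}{2}$ and recalling $\gamma = \lambda_1 + \frac{\delta}{\sqrt N}$ yields~\eqref{eq:gamma_lambda_1}, once one checks that the $O(N^{-1/2+\epsilon})$ error in the equation for $\delta$ translates into an $O(N^{-1+\epsilon})$ error in $\gamma$ (this follows from the implicit function theorem / stability of the simple positive root, whose location and the derivative of the defining function there are both bounded away from degeneracy uniformly, since $J^2 - 1 > 0$ is fixed).

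The main obstacle is twofold and both parts are about \emph{self-consistency and uniformity of error control}. First, the argument is mildly circular: to approximate the bulk sum one needs a priori that $\gamma$ is close to $\hat J$ (so the test function is analytic on a neighborhood of $[-2,2]$ and Corollary~\ref{coro:law_of_large_number} applies), but the conclusion is precisely a refinement of that closeness; this is resolved by first establishing the crude bound $\gamma - \lambda_1 = O(N^{-1/2+\epsilon})$ (or even just $\gamma = \hat J + o(1)$) from the analysis in~\cite{MR3649446} or a direct monotonicity argument on $G'$, then bootstrapping. Second, one must track that every approximation — the replacement of the bulk sum by its semicircle integral, the truncation of the Taylor expansion of $m_{sc}$, and the error in solving the quadratic — contributes at most $O(N^{-1+\epsilon})$ to $\gamma$ and that this holds \emph{with high probability} on the single good event from Lemma~\ref{lemma:rigidity}; the separation of the edge $2$ from $\hat J$ by the fixed positive gap $J - J^{-1}$, together with $J^2 - 1$ being a fixed positive constant, is exactly what keeps all constants under control and the positive root of the quadratic non-degenerate.
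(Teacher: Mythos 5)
Your computation is essentially the paper's: split off the $\frac{1}{N(\gamma-\lambda_1)}$ term, approximate the bulk sum by the semicircle Stieltjes transform via Corollary~\ref{coro:law_of_large_number}, Taylor expand at $\hat J$ using $m_{sc}(\hat J)=J^{-1}$ and $m_{sc}'(\hat J)=\frac{1}{J^2-1}$, and arrive at the quadratic $\delta^2+(\chi_N+(J^2-1)B)\delta-(J^2-1)=0$ for the rescaled offset. Where you differ is the finishing device: you solve the perturbed equation and invoke root stability plus a bootstrap to localize $\gamma$ a priori, whereas the paper defines $\theta$ to be the exact positive root of this quadratic, proves $N^{-\epsilon/3}\le\theta\le N^{\epsilon/3}$ with high probability, and then simply checks $G'(\gamma_-)<0<G'(\gamma_+)$ at the two trial points $\gamma_\pm=\lambda_1+\theta N^{-1/2}\pm N^{-1+\epsilon}$; since $G'$ is strictly increasing on $(\lambda_1,\infty)$, this sandwiches $\gamma$ with the stated error and avoids both the circularity you flag and any implicit-function argument. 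Two small points if you carry out your version: (i) the circularity is milder than you suggest, since the bulk approximation only needs $\gamma>\lambda_1\ge 2+c$, which follows from rigidity (Lemma~\ref{lemma:rigidity}) rather than from the conclusion; it is only the Taylor step that needs an a priori upper bound on $\gamma-\hat J$, obtainable by a single sign check of $G'$ at $\hat J+O(N^{-1/2+\epsilon})$; (ii) your claim that the positive root's \emph{location} is bounded away from degeneracy is not quite accurate: under the high-probability bound $|\chi_N|\le N^{\epsilon/4}$ the root can be as small as order $N^{-\epsilon/4}$ or as large as $N^{\epsilon/4}$ (this is exactly why the paper proves the two-sided bound on $\theta$, which it needs to control the $1/\theta$ and $1/\theta^2$ error terms); what is uniformly bounded below is the derivative of the quadratic at that root, namely $\sqrt{(\chi_N+(J^2-1)B)^2+4(J^2-1)}\ge 2\sqrt{J^2-1}$, and that, together with working with the polynomial form of the equation rather than the $1/\delta$ form, is the non-degeneracy your perturbation argument actually requires.
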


Note that $\gamma$ given above is larger than $\lambda_1$ with high probability since the term in the big parenthesis is positive. 

\begin{proof}
Set 
\beq \label{eq:thettadefn}
	\theta:= \frac{-\chi_N - \bnp(J^2 - 1) + \sqrt{(\chi_N + (J^2 - 1)\bnp)^2 + 4(J^2 - 1)}}2. 
\eeq
Note that $\theta>0$. 
By the rigidity of \(\eg_1\), we have \(|\chi_N| \le N^{\frac{\epsilon}4}\) and hence, $\theta\le N^{\frac{\epsilon}3}$ with high probability. 
On the other hand, using $-a+ \sqrt{a^2 + b^2} = \frac{b^2}{\sqrt{a^2 + b^2} + a}$, 
\beqq
    \theta = \frac{2(J^2 - 1)}{\sqrt{(J^2 - 1)\bnp + \chi_N)^2 + 4(J^2 - 1)} + ((J^2 - 1)\bnp + \chi_N)},
\eeqq
and hence $\theta\ge CN^{-\frac{\epsilon}4}$ for some constant $C>0$ with high probability. Hence,
\beq \label{eq:thetesm}
	N^{-\frac{\epsilon}3} \le \theta \le N^{\frac{\epsilon}3}
\eeq
with high probability. Set
\beq \label{eq:gm1}
	\gamma_\pm :=  \eg_1 + \frac{\theta}{\sqrt{N}}\pm N^{-1+\epsilon}. 
\eeq
By the above properties of $\theta$, we have $\gamma_\pm>\lambda_1$ with high probability. 
We will show that $G'(\gamma_-) < 0$ and $G'(\gamma_+) > 0$ with high probability. 
Since $G'(z)$ is a monotone increasing function for real $z$ in the interval $(\lambda_1, \infty)$, this shows that $\gamma_-<\gamma<\gamma_+$ with high probability, proving the lemma. 

Recall that $\lambda_1\to \hat{J}$ in probability. 
Let us write
\beq \label{eq:gm2}
	\gamma_\pm =  J+\frac1{J} + \frac{\phi}{\sqrt{N}}\pm N^{-1+\epsilon}, \qquad \phi:= \theta+\chi_N
\eeq
where $\chi_N=\sqrt{N}(\eg_1-\hat{J})$. Note that $\phi= O(N^{\frac{\epsilon}3})$ with high probability. Now, notice that 
\begin{equation}
\label{eq:lemma:gamma_lambda_1_1}
	G'(z) = 2\beta - \frac{1}{N}\sum_{i = 2}^N\frac{1}{z - \lambda_i} - \frac{1}{N(z - \lambda_1)}.
\end{equation}
We apply Corollary \ref{coro:law_of_large_number} to the family of the function \(\{\frac1{z - x}\}_{z > 2 + c}\) for some constant $c>0$ and obtain 
\beqq
	G'(\gamma_\pm) = 2\beta - \frac{\gamma_\pm - \sqrt{\gamma_\pm^2 - 4}}{2} + O(N^{-1 + \frac{\epsilon}3}) - \frac{1}{N(\gamma_\pm - \lambda_1)} 
\eeqq
with high probability. 
By~\eqref{eq:gm2}, 
\beqq
	\frac{\gamma_\pm - \sqrt{\gamma_\pm^2 - 4}}{2} 
	= \frac1{J} - \frac1{J^2-1} \left( \frac{\phi}{\sqrt{N}}\pm N^{-1+\epsilon} \right) + O(N^{-1+ \frac{2\epsilon}3}).
\eeqq
By~\eqref{eq:gm1}, 
\beqq
	\frac{1}{N(\gamma_\pm - \lambda_1)} = \frac1{\theta\sqrt{N}} \left( 1\mp \frac{N^{-\frac12+\epsilon}}{\theta} 
	+ O \left( \frac{N^{-1+2\epsilon}}{\theta^2} \right)\right).
\eeqq
Using the formula of $2\beta$ and the estimate~\eqref{eq:thetesm} for $\frac{1}{\theta}$, we find that
\begin{equation}
\label{eq:s_quad}
	G'(\gamma_\pm) = \frac1{\sqrt{N}} \left( \bnp + \frac{\phi}{J^2-1} - \frac1{\theta}  \right)
	\pm \left( \frac{1}{J^2-1}+ \frac1{\theta^2} \right)  N^{-1+\epsilon} + O(N^{-1+ \frac{2\epsilon}3})
\end{equation}
with high probability since $0<\epsilon<\frac14$. 
By the definition of $\theta$, the leading term is zero. 
The coefficient of the second term is positive. Hence we find that $G'(\gamma_-)<0$ and $G'(\gamma_+)>0$, and we obtain the lemma. 
\end{proof}

Then we have the following lemma. 

\begin{lemma} \label{cor:s_chi_dis}
Set
\begin{equation} \label{eq:gamma_form}
	\text{$s=s_N:= \sqrt{N} (\gamma-  J-J^{-1})$ and $\Delta=\Delta_N:=\sqrt{N}(\gamma-\lambda_1)= s_N-\chi_N$.} 
\end{equation} 
Then, for every $\epsilon>0$, 
\begin{equation}
\label{eq:gamma_s}
	s = \frac{\chi_N - \bnp(J^2 - 1) + \sqrt{(\chi_N + (J^2 - 1)\bnp)^2 + 4(J^2 - 1)}}{2} + O(N^{-\frac12 + \epsilon})
\end{equation} 
with high probability. We also have 
\begin{equation} \label{eq:schboudn}
    \text{$|s|\le N^{\epsilon}$ and $N^{-\epsilon} \leq \Delta \leq N^{\epsilon}$}
\end{equation}
with high probability.
\end{lemma}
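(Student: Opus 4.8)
\textbf{Proof plan for Lemma~\ref{cor:s_chi_dis}.}
The statement is essentially a reformulation of Lemma~\ref{lemma:gamma_lambda_1} in the rescaled variable $s = \sqrt N(\gamma - \hat J)$, so the plan is to extract everything from what has already been established. First I would multiply the identity~\eqref{eq:gamma_lambda_1} by $\sqrt N$ and subtract $\sqrt N(\lambda_1 - \hat J) = \chi_N$ from both sides. Since $\gamma = \lambda_1 + \tfrac{\theta}{\sqrt N} + O(N^{-1+\epsilon})$ with high probability by Lemma~\ref{lemma:gamma_lambda_1} (where $\theta$ is as in~\eqref{eq:thettadefn}), this gives
\[
    s = \sqrt N(\gamma - \hat J) = \sqrt N(\lambda_1 - \hat J) + \theta + O(N^{-1/2+\epsilon}) = \chi_N + \theta + O(N^{-1/2+\epsilon})
\]
with high probability. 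Substituting the explicit formula~\eqref{eq:thettadefn} for $\theta$ and simplifying $\chi_N + \theta$ yields exactly the claimed right-hand side of~\eqref{eq:gamma_s}, since
\[
    \chi_N + \theta = \chi_N + \frac{-\chi_N - \bnp(J^2-1) + \sqrt{(\chi_N + (J^2-1)\bnp)^2 + 4(J^2-1)}}{2} = \frac{\chi_N - \bnp(J^2-1) + \sqrt{(\chi_N + (J^2-1)\bnp)^2 + 4(J^2-1)}}{2}.
\]

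For the bounds~\eqref{eq:schboudn}, I would argue as follows. By the rigidity bound~\eqref{rigego}, $|\chi_N| \le N^{\epsilon/4}$ with high probability, and by~\eqref{eq:thetesm} we have $N^{-\epsilon/3} \le \theta \le N^{\epsilon/3}$ with high probability (both already proved inside the proof of Lemma~\ref{lemma:gamma_lambda_1}). Hence $|s| \le |\chi_N| + \theta + O(N^{-1/2+\epsilon}) \le N^{\epsilon}$ with high probability, after relabeling $\epsilon$. For $\Delta_N = s_N - \chi_N$, note that $\Delta_N = \sqrt N(\gamma - \lambda_1) = \theta + O(N^{-1/2+\epsilon})$ by Lemma~\ref{lemma:gamma_lambda_1}; combined with $N^{-\epsilon/3} \le \theta \le N^{\epsilon/3}$, this gives $N^{-\epsilon} \le \Delta_N \le N^{\epsilon}$ with high probability (again after adjusting $\epsilon$, and using that $\gamma > \lambda_1$ so $\Delta_N > 0$, as noted after Lemma~\ref{lemma:gamma_lambda_1}).

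There is essentially no obstacle here: the lemma is a bookkeeping corollary of Lemma~\ref{lemma:gamma_lambda_1}, and the only point requiring minor care is tracking the error terms and the $\epsilon$-relabeling so that the final exponents match the stated ones. One should also note that the error term in~\eqref{eq:gamma_s} is stated as $O(N^{-1/2+\epsilon})$ rather than $O(N^{-1+\epsilon})$; this weaker error is what comes out of the above manipulation after multiplying the $O(N^{-1+\epsilon})$ error in~\eqref{eq:gamma_lambda_1} by $\sqrt N$, so the two statements are consistent. Finally, since all the events invoked (rigidity of $\lambda_1$, the two-sided bound~\eqref{eq:thetesm} on $\theta$, and the conclusion of Lemma~\ref{lemma:gamma_lambda_1}) each hold with high probability, their intersection does as well, which justifies the "with high probability" qualifier throughout.
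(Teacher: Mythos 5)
Your proposal is correct and follows essentially the same route as the paper: the paper's own proof simply cites Lemma~\ref{lemma:gamma_lambda_1} for \eqref{eq:gamma_s}, the rigidity bound $\chi_N = O(N^{\epsilon})$ for the bound on $|s|$, and the estimate \eqref{eq:thetesm} on $\theta$ for the two-sided bound on $\Delta$, which is exactly the bookkeeping you carry out. Your version just makes the algebra $\chi_N + \theta$ and the $\epsilon$-relabeling explicit, which is fine.
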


\begin{proof}
The previous lemma implies~\eqref{eq:gamma_s}. The first part of~\eqref{eq:schboudn} follows from the fact that $\chi_N=O(N^{\epsilon})$ with high probability. 
The second part is the estimate~\eqref{eq:thetesm} in the proof of the previous lemma. 
\end{proof}

We also need the following lemma. 
\begin{lemma} \label{lem:Fdre} 
For every $0<\epsilon<1$, 
\begin{equation} 
    	\frac1{N}\sum_{i=2}^N \frac{1}{(\gamma-\lambda_i)^2} = \frac{1}{J^2-1} + O(N^{-1+\epsilon})
\end{equation}
with high probability.
\end{lemma}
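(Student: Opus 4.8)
The quantity $\frac1N\sum_{i=2}^N (\gamma-\lambda_i)^{-2}$ is a partial linear statistic of the eigenvalues evaluated at the random point $\gamma$, so the natural strategy is to relate it to the same quantity with $\gamma$ replaced by a deterministic point and then invoke the law of large numbers already established in Corollary~\ref{coro:law_of_large_number}. The only slightly delicate issue is that the function $x\mapsto (\gamma-x)^{-2}$ depends on the random point $\gamma$; but by Lemma~\ref{cor:s_chi_dis} we have $\gamma-\lambda_1=\Delta_N/\sqrt N$ with $N^{-\epsilon}\le\Delta_N\le N^\epsilon$, and since $\lambda_1\to\hat J$ while $\lambda_2,\dots,\lambda_N$ all lie in a neighborhood of $[-2,2]$ with high probability (Lemma~\ref{lemma:rigidity}), the point $\gamma$ stays at a distance $\ge c>0$ from $[-2,2]$ for some fixed $c$ with high probability. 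So the family $\{x\mapsto (z-x)^{-2}\}_{z\ge 2+c}$ is uniformly bounded with uniformly bounded derivative on $[-2-\delta,2+\delta]$ for small $\delta$, and each member is monotone increasing there.

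\textbf{Key steps.} First, fix a small $c>0$ and restrict to the high-probability event on which $\gamma\ge 2+c$ and the rigidity bounds of Lemma~\ref{lemma:rigidity} hold. On this event, apply Corollary~\ref{coro:law_of_large_number} to the family $\{f_z(x)=(z-x)^{-2}:z\ge 2+c\}$ (after, if one wants to be pedantic, writing $f_z$ as a difference of two bounded monotone functions or just checking that the proof of the corollary only uses the Lipschitz bound and monotonicity, both of which hold here), to get
\begin{equation*}
    \sup_{z\ge 2+c}\left|\frac1N\sum_{i=2}^N \frac1{(z-\lambda_i)^2} - \int_{-2}^2 \frac{\scl}{(z-x)^2}\right| = O(N^{-1+\epsilon})
\end{equation*}
with high probability. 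Second, evaluate the integral: $\int_{-2}^2 (z-x)^{-2}\,\dd\sigma_{scl}(x)$ is (minus) the derivative in $z$ of the Stieltjes transform $m(z)=\int_{-2}^2(z-x)^{-1}\,\dd\sigma_{scl}(x)=\frac{z-\sqrt{z^2-4}}2$, so it equals $-m'(z)=\frac12\bigl(\frac{z}{\sqrt{z^2-4}}-1\bigr)$. Third, substitute $z=\gamma$. Since $\gamma=\hat J+s_N/\sqrt N$ with $|s_N|\le N^\epsilon$ (Lemma~\ref{cor:s_chi_dis}) and $\hat J=J+J^{-1}$, a Taylor expansion gives $\gamma^2-4=(J-J^{-1})^2+O(N^{-1/2+\epsilon})$, hence $-m'(\gamma)=\frac12\bigl(\frac{\hat J}{J-J^{-1}}-1\bigr)+O(N^{-1/2+\epsilon})$. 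A direct computation simplifies $\frac12\bigl(\frac{J+J^{-1}}{J-J^{-1}}-1\bigr)=\frac{J^{-1}}{J-J^{-1}}=\frac1{J^2-1}$, which is exactly the claimed leading term. Combining the two error terms $O(N^{-1+\epsilon})$ and $O(N^{-1/2+\epsilon})$ yields the bound $O(N^{-1/2+\epsilon})$, which is certainly $O(N^{-1+\epsilon'})$ is \emph{not} quite right — one gets $O(N^{-1/2+\epsilon})$, so I would either state the lemma with that weaker error (which is all that is needed downstream) or note that in fact $s_N$ is $O(1)$ in probability so the error from the Taylor step is smaller; in any case the lemma as stated with $O(N^{-1+\epsilon})$ should read with the dominant error, and I would absorb it accordingly.

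\textbf{Main obstacle.} The only real point requiring care is the uniformity: one must argue that the single random point $\gamma$ lands in a region where the deterministic law-of-large-numbers estimate applies with a uniform error, rather than trying to handle a genuinely $N$-dependent random test function. This is handled cleanly by the sup over $z\ge 2+c$ in Corollary~\ref{coro:law_of_large_number} combined with the lower bound $\gamma-\lambda_1\ge N^{-\epsilon}/\sqrt N$ on $\Delta_N$ and the separation of $\lambda_2,\dots,\lambda_N$ from $\gamma$. Once that is in place, the remaining work is the elementary Stieltjes-transform computation and a one-line Taylor expansion, so there is no substantive difficulty beyond bookkeeping of the error exponents.
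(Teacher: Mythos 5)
Your argument is essentially the paper's own: the entire proof in the paper is the one line ``apply Corollary~\ref{coro:law_of_large_number} to $f(x)=\frac{1}{(\gamma-x)^2}$,'' and your write-up simply supplies the details that this requires — taking the deterministic family $\{(z-x)^{-2}\}_{z\ge 2+c}$ (which is indeed monotone increasing in $x$ with uniformly bounded values and derivatives, so no decomposition is needed), using the sup in the corollary to absorb the randomness of $\gamma$, and then evaluating $-m'(\hat J)=\frac{1}{2}\bigl(\frac{\hat J}{\sqrt{\hat J^2-4}}-1\bigr)=\frac{1}{J^2-1}$.

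Your remark about the error exponent is a correct catch, not a defect of your proof. Since $\gamma-\hat J=s_N N^{-1/2}$ with $s_N$ converging to the nondegenerate limit $\es(\gt)$ (Lemma~\ref{cor:s_chi_dis}), and since $\frac{\dd}{\dd z}\int_{-2}^2\frac{1}{(z-x)^2}\scl$ is nonzero at $z=\hat J$, the substitution $z=\gamma$ really does cost a term of genuine order $N^{-1/2}$; so the bound produced by this argument is $O(N^{-1/2+\epsilon})$, and the stated $O(N^{-1+\epsilon})$ cannot hold for $\epsilon<\frac12$ on a high-probability event. This is a slip in the lemma's statement rather than in its use: everywhere the lemma is invoked the weaker bound suffices — in Lemma~\ref{lemma:steepest_descent} only $c_1\le F''(\gamma)\le c_2$ is needed, and in the proof of Theorem~\ref{thm:free_energy_para_ferro} the lemma is multiplied by $(\gamma-\hat J)^2=O(N^{-1+2\epsilon})$ or enters $\log\intc(F''(\gamma)\Delta^2)$ with $\Delta^2\le N^{2\epsilon}$, so $O(N^{-1/2+\epsilon})$ reproduces exactly the errors $O(N^{-3/2+3\epsilon})$ and $O(N^{-3/2+4\epsilon})$ claimed there. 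So your proof is correct, follows the paper's route, and the only adjustment needed is to restate the lemma with the error $O(N^{-1/2+\epsilon})$, as you propose.
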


\begin{proof}
This follows from Corollary~\ref{coro:law_of_large_number} applied to $f(x)= \frac1{(\gamma-x)^2}$. 
\end{proof}

The following auxiliary lemma is used to estimate an error in the steepest descent analysis. 

\begin{lemma}
\label{lemma:ialpha_limit_behavior}
Define
\begin{equation}
	\intc_m(\alpha) := \int_{-\infty}^{\infty}  \frac{t^m}{\sqrt{1 + \ii t}} e^{-\frac{\alpha}{4}t^2 + \frac{\ii t}{2}} 
	\dd t
\end{equation}
for non-negative integers $m$ and $\alpha>0$, where the square root is the defined on the principal branch. We set \( \intc(\alpha) := \intc_0(\alpha)\); see~\eqref{eq:ialpha_def}. Then, 
\begin{equation}
\label{eq:ialpha_infty}
\intc(\alpha) = \sqrt{\frac{4\pi}{\alpha}}(1 + O(\alpha^{-1})) \quad \text{as  $\alpha \rightarrow +\infty$,} 
\end{equation}
\begin{equation}
\label{eq:ialpha_0}
	\intc(\alpha) = \sqrt{\frac{8\pi}{e}}(1 + O(\alpha)) \quad \text{as $\alpha \rightarrow 0_+$,} 
\end{equation}
and for every $m\ge 0$, 
\begin{equation}
	\text{$\intc_m(\alpha)$ is uniformly bounded for $\alpha\in (0,\infty)$.} 
\end{equation}
A particular consequence is that  the derivative \(\intc'(\alpha) = -\frac{1}{4} \intc_2(\alpha)\) is uniformly bounded for $\alpha>0$. 
Furthermore, $\intc(\alpha)>0$ for all $\alpha>0$.
\end{lemma}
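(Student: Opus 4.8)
The plan is to analyze the contour integral $\intc_m(\alpha) = \int_{-\infty}^{\infty} t^m (1+\ii t)^{-1/2} e^{-\frac{\alpha}{4}t^2 + \frac{\ii t}{2}}\,\dd t$ in two asymptotic regimes, using Watson's lemma / Laplace method ideas for $\alpha\to+\infty$ and a change of variables for $\alpha\to 0_+$. For the large-$\alpha$ asymptotics \eqref{eq:ialpha_infty}, I would first rescale $t = u/\sqrt{\alpha}$, which turns the exponent into $-\frac14 u^2 + \frac{\ii u}{2\sqrt{\alpha}}$ and the prefactor $(1+\ii u/\sqrt{\alpha})^{-1/2}$. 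As $\alpha\to\infty$ the integrand converges pointwise to the Gaussian $e^{-u^2/4}$, and one gets a dominating integrable bound (the real part of the exponent is $-u^2/4$ uniformly, and $|1+\ii u/\sqrt{\alpha}|^{-1/2}\le 1$ for $\alpha\ge 1$), so dominated convergence gives $\sqrt{\alpha}\,\intc(\alpha)\to \int e^{-u^2/4}\,\dd u = 2\sqrt{\pi}$, i.e. $\intc(\alpha)=\sqrt{4\pi/\alpha}(1+o(1))$. To upgrade $o(1)$ to $O(\alpha^{-1})$, Taylor-expand both $(1+\ii u/\sqrt\alpha)^{-1/2} = 1 - \frac{\ii u}{2\sqrt\alpha} + O(u^2/\alpha)$ and $e^{\ii u/(2\sqrt\alpha)} = 1 + \frac{\ii u}{2\sqrt\alpha} + O(u^2/\alpha)$: the product is $1 + O(u^2/\alpha)$ because the two $O(\alpha^{-1/2})$ terms cancel (this cancellation is why there is no $\alpha^{-1/2}$ correction), and integrating the $O(u^2/\alpha)$ remainder against $e^{-u^2/4}$ gives the claimed $O(\alpha^{-1})$; one must keep the remainder terms controlled uniformly, which is routine since all moments of the Gaussian are finite.

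For the small-$\alpha$ asymptotics \eqref{eq:ialpha_0}, the Gaussian damping disappears and the integral is governed by the oscillatory factor together with the branch point at $t = \ii$. I would deform the contour: write $\intc(\alpha) = \int_{-\infty}^\infty f_\alpha(t)\,\dd t$ with $f_\alpha(t) = (1+\ii t)^{-1/2} e^{-\frac\alpha4 t^2 + \frac{\ii t}{2}}$, and note that for $\alpha=0$ the integral $\int (1+\ii t)^{-1/2} e^{\ii t/2}\,\dd t$ converges conditionally. The cleanest route is to substitute $1 + \ii t = w$, or equivalently push the contour up toward the branch cut: the value $\sqrt{8\pi/e}$ strongly suggests evaluating $\int_{-\infty}^\infty (1+\ii t)^{-1/2} e^{\ii t/2}\,\dd t$ by residue/branch-cut analysis — shifting $t \mapsto t + \ii(c)$ and collapsing onto the cut emanating from $t=\ii$ produces a Hankel-type integral $\propto \int_0^\infty r^{-1/2} e^{-r/2}\,\dd r = \sqrt{2\pi}$, and the factor $e^{-1/2} = 1/\sqrt e$ comes from evaluating $e^{\ii t/2}$ at $t = \ii$. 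Then continuity of $\intc(\alpha)$ at $\alpha = 0$ gives the leading term, and the $O(\alpha)$ error follows from differentiating under the integral sign using $\intc'(\alpha) = -\frac14\intc_2(\alpha)$ and the uniform boundedness of $\intc_2$.

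For the uniform boundedness of $\intc_m(\alpha)$ on $(0,\infty)$: for $\alpha \ge 1$ the Gaussian factor makes $|\intc_m(\alpha)| \le \int |t|^m e^{-t^2/4}\,\dd t < \infty$ trivially (note $|1+\ii t|^{-1/2}\le 1$). For $0 < \alpha \le 1$, I would split the integral at $|t| = 1$; on $|t|\le 1$ the integrand is bounded and the interval is finite, while on $|t|>1$ I would integrate by parts once using the oscillation $e^{\ii t/2}$: writing $e^{\ii t/2} = \frac{2}{\ii}\frac{\dd}{\dd t} e^{\ii t/2}$ and differentiating the slowly varying factor $t^m(1+\ii t)^{-1/2} e^{-\alpha t^2/4}$ produces terms whose worst behavior is $O(|t|^{m-1}e^{-\alpha t^2/4})$ plus $O(\alpha |t|^{m+1} e^{-\alpha t^2/4})$; both integrate to something bounded uniformly in $\alpha\in(0,1]$ (for the $m=0$ case one integration by parts suffices to get an integrable $|t|^{-2}$-type tail after accounting for the branch factor; for larger $m$ one may need to iterate the integration by parts $m+1$ times). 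The positivity $\intc(\alpha) > 0$ I would get by combining the large-$\alpha$ asymptotic (positive there) with a monotonicity or no-zero argument: either show $\intc(\alpha)$ is real (it is, being the Fourier-type transform of a function with a symmetry, or by checking $\overline{\intc(\alpha)} = \intc(\alpha)$ via $t\mapsto -t$ and conjugating the branch) and $\intc'(\alpha) = -\frac14\intc_2(\alpha)$ has a sign — in fact $\intc(\alpha) = \int(1+\ii t)^{-1/2}e^{\ii t/2}e^{-\alpha t^2/4}\dd t$ can be rewritten, after recognizing $(1+\ii t)^{-1/2}e^{\ii t/2} = \int_0^\infty \pi^{-1/2} x^{-1/2} e^{-x(1+\ii t) + \ii t/2}\dd x$ type representation, as an integral of a manifestly positive kernel, which is the slickest proof of positivity. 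The main obstacle I anticipate is the small-$\alpha$ limit \eqref{eq:ialpha_0}: pinning down the exact constant $\sqrt{8\pi/e}$ requires careful bookkeeping of the branch of the square root when deforming the contour onto the cut, and justifying the contour deformation (controlling the arcs at infinity, where without Gaussian decay one relies on the oscillatory $e^{\ii t/2}$ only for $\re(t)\to\pm\infty$ but needs the $e^{-\alpha t^2/4}$ for the vertical excursions) is the delicate point.
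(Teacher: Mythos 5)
Your proposal is correct in substance but follows a genuinely different route from the paper in two places. For \eqref{eq:ialpha_0}, the paper rescales $t=z/\alpha$ and runs a Watson's-lemma analysis directly for small $\alpha>0$, collapsing the rescaled contour onto the branch cut emanating from $z=\ii\alpha$; you instead evaluate the $\alpha=0$ improper integral by wrapping the contour around the cut at $t=\ii$ (your Hankel computation does give $2e^{-1/2}\int_0^\infty r^{-1/2}e^{-r/2}\,\dd r=\sqrt{8\pi/e}$) and then transfer to small positive $\alpha$ via the Lipschitz bound coming from $\intc'=-\tfrac14\intc_2$ and the uniform boundedness of $\intc_2$. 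The one step you gloss over is the identification of $\lim_{\alpha\to0^+}\intc(\alpha)$ with that conditionally convergent $\alpha=0$ integral: dominated convergence does not apply, and the Lipschitz estimate only gives existence of the limit, not its value, so you need an explicit Abelian argument (e.g.\ integrate by parts in $t$ against $\Phi(T)=\int_0^T(1+\ii t)^{-1/2}e^{\ii t/2}\,\dd t$, which is bounded and convergent). This is standard and fixable, but it should be said. For the uniform boundedness, the paper deduces it from the two-sided asymptotics (its contour analysis shows $\intc_m(\alpha)=O(1)$ as $\alpha\to0_+$ and $O(\alpha^{-(m+1)/2})$ as $\alpha\to\infty$), whereas your iterated integration by parts is a more elementary real-variable argument; note that a single integration by parts does leave the term of size $O(\alpha^{1/4-m/2})$, so for $m\ge1$ the iteration (about $m+1$ times, with uniformly bounded boundary terms at $|t|=1$) is genuinely needed, as you anticipate. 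Finally, your positivity argument via the representation $(1+\ii t)^{-1/2}=\pi^{-1/2}\int_0^\infty x^{-1/2}e^{-x(1+\ii t)}\,\dd x$, followed by Fubini and the Gaussian Fourier transform, yields
\begin{equation*}
\intc(\alpha)=\frac{2}{\sqrt{\alpha}}\int_0^\infty x^{-1/2}e^{-x}e^{-(x-\frac12)^2/\alpha}\,\dd x>0,
\end{equation*}
which is cleaner than the paper's argument (a change of variables $\theta=t-\arctan t$ plus an alternating-lobe comparison, one step of which is only verified numerically); in fact this positive-kernel formula also reproduces both asymptotics \eqref{eq:ialpha_infty} and \eqref{eq:ialpha_0} by Laplace's method at $x=\tfrac12$, so you could base the whole proof on it and avoid the contour deformation and the Abelian step altogether.
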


\begin{proof}
Consider~\eqref{eq:ialpha_infty}.  
Applying the method of steepest-descent to $\intc(\alpha) = \int_{-\infty}^\infty g(t) e^{\alpha h(t)} \dd t$ with \(h(z) = -\frac{z^2}{4}\) and \(g(z) = \frac{1}{\sqrt{1 +\ii z}}e^{\frac{\ii z}{2}}\), we find that 
\begin{equation}
	\intc(\alpha) = \frac{e^{\alpha h(z_{c})} } {\sqrt{\alpha}}  \left[\sqrt{\frac{2\pi}{|h''(z_c)|}}g(z_c) + O(\alpha^{-1})\right] = \sqrt{\frac{4\pi}{\alpha}} (1 + O(\alpha^{-1}))
\end{equation}
as \(\alpha \rightarrow +\infty\).
For $\intc_m(\alpha)$, using $\int_{-\infty}^\infty y^m e^{-\alpha y^2} \dd y= O(\alpha^{-(m+1)/2})$, we find that 
\begin{equation} \label{eq:Ianpal}
	\intc_m(\alpha) = O(\alpha^{-\frac{m+1}{2}}) \quad \text{as $\alpha\to +\infty$.} 
\end{equation}

Consider the limit \(\alpha \rightarrow 0_+\). After the change of the variables \(t = z/\alpha\),
\begin{equation}
\label{eq:lemma_ialpha_1}
	\intc(\alpha) = \frac{e^{-\frac{1}{4\alpha}}}{\sqrt{\alpha}}\int_{-\infty}^{\infty} 
	\frac{ e^{-\frac{(z-\ii)^2}{4\alpha}} }{\sqrt{\alpha+\ii z}} 
	\dd z.
\end{equation}
The integrand is analytic in the complex plane minus the vertical line from $\ii \alpha$ to $\ii \infty$. Note that the saddle point is $\ii$ and it is on the branch cut. 
We show that the main contribution to the integral comes from the branch point $z=\ii \alpha$.
We deform the contour so that it consists of the following four line segments: $L_1$ from $\ii- \infty$ to $\ii$ on the left half-plane, $L_2$ from $\ii$ to $\ii \alpha$ lying on the left of the branch cut, $L_3$ from $\ii \alpha$ to $\ii$ lying on the right of the branch cut, and $L_4$ from $\ii$ to $\ii+\infty$ lying on the right-half plane. 
On $L_4$, setting $z=\ii + \sqrt{\alpha} x$, 
\beq
	\int_{L_4} \frac{ e^{-\frac{(z-\ii)^2}{4\alpha}} }{\sqrt{\alpha+\ii z}} \dd z
	= \sqrt{\alpha} \int_0^\infty \frac{e^{-\frac{x^2}{4}} }{ \sqrt{\alpha-1+\ii \sqrt{\alpha}x} } \dd x
	= O(\sqrt{\alpha})
\eeq	
as $\alpha\to 0$. Similarly, the integral over $L_1$ is also of the same order. 
On the other hand, setting $z=\ii \alpha+ \ii y$, 
\beq
	\int_{L_2\cup L_3} \frac{ e^{-\frac{(z-\ii)^2}{4\alpha}} }{\sqrt{\alpha+\ii z}} \dd z
	= 2 \int_0^{1-\alpha} \frac{e^{\frac{(\alpha +y-1)^2}{4\alpha}}}{\sqrt{y}} \dd y
	= 2 e^{\frac{(\alpha-1)^2}{4\alpha}} \int_0^{1-\alpha} \frac{e^{\frac{y}2+ \frac{y^2-2y}{4\alpha}}}{\sqrt{y}} \dd y.
\eeq
The function $y^2-2y$ decreases as $y$ increases from $y=0$ to $y=1$. Hence the main contribution to the integral comes near the point $y=0$. 
Using Watson's lemma, 
\beq
	\int_0^{1-\alpha} \frac{e^{\frac{y}2+ \frac{y^2-2y}{4\alpha}}}{\sqrt{y}} \dd y
	= \Gamma(1/2) \sqrt{2\alpha} (1+O(\alpha)). 
\eeq
Combining together and using $\Gamma(1/2)=\sqrt{\pi}$, we obtain \eqref{eq:ialpha_0}.
For $\intc_m(\alpha)$, the analysis is same except that we use 
\beq
	\int_0^{1-\alpha} (\ii \alpha+\ii y)^m \frac{e^{\frac{y}2+ \frac{y^2-2y}{4\alpha}}}{\sqrt{y}} \dd y
	= O(\alpha^{m+1/2}). 
\eeq
Hence, we find that for $m\ge 0$, $\intc_m(\alpha) = O(1)$ as $\alpha\to 0_+$.  Together with \eqref{eq:Ianpal}, this implies the uniform boundness of $\intc_m(\alpha)$.

For the positiveness of \(\intc(\alpha)\), we first write it as
\begin{equation}
\label{eq:lemma_ialpha_2}
	\intc(\alpha) = \int_{-\infty}^{\infty} \frac{e^{-\frac{\alpha}{4}t^2+ \frac{\ii}{2}(t - \arctan{t})}}{(1 + t^2)^{1/4}} \dd t = 2\int_{0}^\infty \frac{e^{-\frac{\alpha}{4}t^2}}{(1 + t^2)^{1/4}}\cos\left(\frac{1}{2}(t - \arctan{t})\right) \dd t .
\end{equation}
The function  \(\theta(t) = t - \arctan t\) is monotone increasing. 
We use the inverse function, $t=t(\theta)$, to change the variables and find that 
\begin{equation}
	\intc(\alpha)= 2\int_0^\infty e^{-\frac{\alpha}{4}t^2}\frac{(1 + t^2)^{3/4}}{t^2}\cos \left(\frac{\theta}{2} \right)\dd \theta, 
	\qquad t= t(\theta). 
\end{equation}
Since \(e^{-\frac{\alpha}{4}t(\theta)^2}\) is positive and monotone decreasing in \(\theta\), we obtain \(I(\alpha) > 0\) for every \(\alpha > 0\) if we show that (i)
\begin{equation}
	\int_0^\pi \frac{(1 + t^2)^{3/4}}{t^2}\cos\left(\frac{\theta}{2}\right)\dd \theta \geq -\int_\pi^{3\pi}\frac{(1 + t^2)^{3/4}}{t^2}\cos\left(\frac{\theta}{2}\right)\dd \theta,
\end{equation}
and (ii)
\begin{equation}
(-1)^k\int_{(2k - 1)\pi}^{(2k + 1)\pi}\frac{(1 + t^2)^{3/4}}{t^2}\cos\left(\frac{\theta}{2}\right)\dd \theta, \qquad k = 1,2,3,\cdots,
\end{equation}
is decreasing in \(k\). (i) can be verified numerically. On the other hand, (ii) follows immediately from the fact \((1 + t^2)^{3/4}/t^2\) is a decreasing function of \(t\). This completes the proof. 
\end{proof}

We now evaluate the integral in \eqref{eq:sph_line} using the steepest descent analysis.

\begin{lemma}
\label{lemma:steepest_descent}
Fix \(J > 1\) and let \(2\beta= J^{-1}+\bnp N^{-1/2} \). Consider G(z) defined in \eqref{eq:sph_line}.
Then, for every \(0< \epsilon <\frac18\), 
\begin{equation}
	\int_{\gamma - \ii\infty}^{\gamma + \ii\infty} e^{\frac{N}{2}G(z)} \dd z 
	= \frac{\ii \Delta e^{\frac{N}{2}G(\gamma)}}{\sqrt{N}}  \intc(F''(\gamma)\Delta^2) \left( 1 + O(N^{-\frac12+ 4\epsilon}) \right)
\end{equation}
with high probability, where
\begin{equation}
\label{eq:F_1}
	F(z) = 2\beta z - \frac{1}{N}\sum_{i = 2}^N \log(z - \lambda_i) - \frac{1}{N} \log(\gamma - \lambda_1) - \frac{z - \gamma}{N(\gamma - \lambda_1)}
\end{equation}
and \(\intc(\alpha)\) is defined in \eqref{eq:ialpha_def}. Recall that
$\Delta=\sqrt{N}(\gamma-\lambda_1)$ (see Lemma~\ref{cor:s_chi_dis}.) 
\end{lemma}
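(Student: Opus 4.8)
\textbf{Proof proposal for Lemma~\ref{lemma:steepest_descent}.}

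The plan is to deform the vertical contour in \eqref{eq:sph_line} to pass through the critical point $\gamma$ and to show that the contribution is localized to a window of size $N^{-1/2+\epsilon}$ around $\gamma$, on which $G$ is well approximated by the auxiliary function $F$ (which replaces the singular $\lambda_1$-term $-\frac1N\log(z-\lambda_1)$ by its linear Taylor polynomial at $\gamma$). First I would write $G(z) = F(z) + \frac1N\big[\log(\gamma-\lambda_1) + \frac{z-\gamma}{\gamma-\lambda_1} - \log(z-\lambda_1)\big]$, so that the bracket is exactly the Taylor remainder of $-\log(z-\lambda_1)$ at $z=\gamma$; it vanishes to second order. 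Parametrize $z = \gamma + \ii(\gamma-\lambda_1)t = \gamma + \frac{\ii\Delta}{\sqrt N}t$ for $t\in\R$, so that $z-\lambda_1 = (\gamma-\lambda_1)(1+\ii t)$ and the $\lambda_1$-remainder becomes exactly $-\frac1N\log(1+\ii t) + \frac{\ii t}{N}$, which is what produces the $\frac1{\sqrt{1+\ii t}}$ and $e^{\ii t/2}$ in $\intc$. With this substitution, $\dd z = \frac{\ii\Delta}{\sqrt N}\dd t$, which gives the prefactor $\frac{\ii\Delta e^{\frac N2 G(\gamma)}}{\sqrt N}$ after factoring out $e^{\frac N2 G(\gamma)}$.

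Next I would expand $F$ along the contour. Since $\gamma$ is the critical point of $G$ and $F'(\gamma) = G'(\gamma) + \frac1N\big[\frac1{\gamma-\lambda_1} - \frac1{\gamma-\lambda_1}\big] = G'(\gamma) = 0$, the linear term vanishes; so $\frac N2[F(z)-F(\gamma)] = \frac N4 F''(\gamma)(z-\gamma)^2 + \frac N2\cdot(\text{cubic and higher})$. With $(z-\gamma)^2 = -\frac{\Delta^2}{N}t^2$, the quadratic term is $-\frac14 F''(\gamma)\Delta^2 t^2 = -\frac\alpha4 t^2$ with $\alpha = F''(\gamma)\Delta^2$, matching the Gaussian weight in $\intc(\alpha)$. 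For the cubic remainder, I would bound $F'''$ on the relevant region: $F'''(z) = -\frac2N\sum_{i=2}^N\frac1{(z-\lambda_i)^3}$, and using Lemma~\ref{cor:s_chi_dis} (so $\gamma$ is bounded and $\gamma-\lambda_i$ is bounded below for $i\ge 2$ since the bulk eigenvalues stay near $[-2,2]$), $F'''$ is $O(1)$ on a fixed neighborhood; one needs a little care in the $t$-range $|t|\le N^{\epsilon}/\Delta$ where $|z-\gamma| = \frac{\Delta}{\sqrt N}|t|$ may be as large as $N^{-1/2+\epsilon}$, but there the cubic term is $O(N\cdot N^{-3/2+3\epsilon}) = O(N^{-1/2+3\epsilon})$, which is the claimed error once one checks $F''(\gamma) = \Theta(1)$ via Lemma~\ref{lem:Fdre} (indeed $F''(\gamma) = \frac1N\sum_{i=2}^N\frac1{(\gamma-\lambda_i)^2} = \frac1{J^2-1} + o(1)$, so $\alpha = \frac{\Delta^2}{J^2-1}(1+o(1))$, which by \eqref{eq:schboudn} lies in $[N^{-2\epsilon}, N^{2\epsilon}]$ — exactly the range where the uniform bounds in Lemma~\ref{lemma:ialpha_limit_behavior} and the lower bound $\intc(\alpha)>0$ are needed to turn additive errors into the multiplicative factor $1+O(N^{-1/2+4\epsilon})$).

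The remaining work is to discard the tails $|t| > N^{\epsilon}/\Delta$. On the vertical line, $\re[F(z)-F(\gamma)]$ should be shown to decrease as $|t|$ grows; the dominant mechanism is the term $-\frac1N\sum_{i=2}^N\log|z-\lambda_i| + 2\beta\re z$, whose real part along the line is $2\beta\gamma - \frac1N\sum\log|\gamma-\lambda_i| - \frac1{2N}\sum\log(1 + t^2(\gamma-\lambda_i)^2/(\gamma-\gamma)^2)$-type quantity; more simply, $\frac{\partial}{\partial t}\re\big[\frac N2 F\big] = -\frac N2\,\im F'(z)\cdot\frac{\Delta}{\sqrt N} < 0$ for $t>0$ by monotonicity/convexity of $\frac1N\sum_{i\ge 2}\frac1{z-\lambda_i}$ off the real axis, so the real part is unimodal and the tail integral is exponentially smaller than the main term; combined with the polynomial bound $|e^{\frac N2 G(z)}| \le e^{\frac N2 F(\gamma)}\cdot(\text{something integrable})$ from the $\frac1{\sqrt{1+\ii t}}$ decay this is a standard Laplace-type tail estimate. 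I expect the main obstacle to be the bookkeeping of the several small parameters — tracking simultaneously the rigidity error in the eigenvalue locations, the cubic Taylor error, the replacement of $\alpha = F''(\gamma)\Delta^2$ by $\Delta^2/(J^2-1)$, and the fact that $\alpha$ itself is only known up to $N^{\pm\epsilon}$ — and showing that all of these assemble into the single clean multiplicative error $1+O(N^{-1/2+4\epsilon})$; this is where the uniform boundedness of $\intc_m$ and the strict positivity of $\intc$ from Lemma~\ref{lemma:ialpha_limit_behavior} do the essential work, since they let one convert $\intc(\alpha + O(N^{-1/2+3\epsilon})) = \intc(\alpha)(1 + O(N^{-1/2+3\epsilon}))$ uniformly over the admissible range of $\alpha$.
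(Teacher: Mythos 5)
Your proposal follows essentially the same route as the paper's proof: the identical decomposition $G = F + (\text{exact Taylor remainder of the }\lambda_1\text{ term})$, which produces the $e^{\ii t/2}/\sqrt{1+\ii t}$ factor and the prefactor $\ii\Delta e^{\frac N2 G(\gamma)}/\sqrt N$, the same second-order expansion of $F$ at the critical point with the cubic error controlled via Lemma~\ref{lem:Fdre}, Lemma~\ref{cor:s_chi_dis} and the uniform boundedness/positivity of Lemma~\ref{lemma:ialpha_limit_behavior}, and the same tail mechanism; the only deviations (rescaling $t$ by $\Delta$ at the outset, and bounding the cubic term in absolute value instead of keeping it explicitly through $\intc_3$) are cosmetic and give the same $O(N^{-1/2+4\epsilon})$ relative error. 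One small correction: the tail integral cannot be made convergent by the $1/\sqrt{1+\ii t}$ prefactor (its modulus $(1+t^2)^{-1/4}$ is not integrable), but rather by the quantitative decay $\Re\,\frac N2\big(F(z)-G(\gamma)\big)\le -\frac{N-1}{2}\log\big(1+\tfrac{c^2t^2}{N}\big)$ coming from the $\log|z-\lambda_i|$ sum you correctly identify as the dominant mechanism, exactly as in the paper's estimate \eqref{eq:IFNeb}.
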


\begin{proof}
We choose the \(\gamma\), which defines the contour, as the critical point of \(G(z)\).
The path of steepest-descent is locally a vertical line near the critical point. 
It turns out that, instead of using the path of steepest-descent, it is enough to proceed the analysis using the straight line \(\gamma + \ii\R\) globally. 
This choice was also made for the analysis in the paramagnetic regime in \cite{MR3649446}. 

We first write, using the function $F(z)$, 
\begin{equation}
\begin{aligned}
\label{eq:lemma:steepest_descent_000}
	\int_{\gamma - \ii\infty}^{\gamma + \ii\infty} e^{\frac{N}{2}G(z)} \dd z = e^{\frac{N}{2}G(\gamma)}\int_{\gamma-\ii\infty}^{\gamma + \ii\infty} e^{\frac{N}{2}(G(z) -F(z)) + \frac{N}{2}(F(z) - G(\gamma))} \dd z .
\end{aligned}
\end{equation}
From the definitions of $G(z)$ and $F(z)$,  
\beq
	e^{\frac{N}{2}(G(z) -F(z))} = \sqrt{\frac{\gamma-\lambda_1}{z-\lambda_1}} e^{\frac{z-\gamma}{2(\gamma-\lambda_1)}}.
\eeq
Changing the variables \(z = \gamma + \ii t N^{-1/2}\) and using the notation $\Delta=\sqrt{N}(\gamma-\lambda_1)$, 
\begin{equation}
\begin{aligned}
\label{eq:lemma:steepest_descent_1}
	\int_{\gamma - \ii \infty}^{\gamma + \ii \infty} e^{\frac{N}{2}G(z)} \dd z 
	=& \frac{\ii e^{\frac{N}{2}G(\gamma)}}{\sqrt{N}}  \int_{-\infty}^{\infty} \frac{e^{\frac{\ii t}{2\Delta}} }{\sqrt{1 + \frac{\ii t}{\Delta}}}
	e^{\frac{N}{2}(F(\gamma + \ii t N^{-1/2}) - G(\gamma))} \dd t.
\end{aligned}
\end{equation}

It is easy to check that the part of the integral with $|t|\ge N^{\epsilon}$ is  small. 
To show this, we first note that
\beqq
	\Re \left( N\left(F(\gamma + \frac{\ii t}{\sqrt{N}}) - G(\gamma)\right) \right) 
	= - \Re \sum_{i = 2}^N \log \left( \frac{\gamma - \eg_i + \ii tN^{-1/2}}{\gamma - \eg_i} \right) 
	\le - \frac{N-1}2 \log \left( 1+ \frac{c^2t^2}{N}\right)
\eeqq
with high probability for some constant $c>0$, since there is a constant $c>0$ such that $c\le \gamma-\lambda_i \le \frac1{c}$ for all $i=2, \cdots, N$, with high probability. 
Hence, 
\begin{equation} \label{eq:IFNeb}
\begin{aligned}
		&\left| \int_{N^\epsilon}^\infty \frac{e^{\frac{\ii t}{2\Delta}}}{\sqrt{1 + \frac{\ii t}{\Delta}}} e^{\frac{N}{2}(F(\gamma + it N^{-1/2}) - G(\gamma)}\dd t \right|  
		\le \int_{N^\epsilon}^\infty e^{ - \frac{N-1}2 \log \left( 1+ \frac{c^2t^2}{N}\right)} \dd t  \\
    &\qquad \leq \int_{N^\epsilon}^N e^{-\frac{c^2}{8}N^{2\epsilon}} \dd t + \int_{N}^\infty \frac1{(c^2N^{-1}t^2)^{N/4}} \dd t = O(e^{-N^\epsilon}) + O(N^{-N/8})
\end{aligned}
\end{equation}
with high probability. 
 
Consider the part $|t|\le N^{\epsilon}$. 
Note that \(F(z)\) satisfies \(F(\gamma) = G(\gamma)\),  \(F'(\gamma) = G'(\gamma)= 0\), and for each $m\ge 2$,  \(F^{(m)}(z) = O(1)\) uniformly for $z$ in a small neighborhood of $\gamma$ (by Corollary \ref{coro:law_of_large_number}). 
For $m=2$, by Lemma~\ref{lem:Fdre}, 
\beq \label{eq:Fddesmb}
	c_1\le F''(\gamma)\le c_2
\eeq
for some constants $0 < c_1 < c_2$, uniformly in $N$. 
By Taylor expansion, for $|t|\le N^{\epsilon}$, 
\beq
	F(\gamma + \ii tN^{-1/2}) - G(\gamma)
	= - \frac{F''(\gamma) t^2}{2N} - \frac{\ii F'''(\gamma)  t^3}{6N^{3/2}} + O(N^{-2 + 4\epsilon})
\eeq
and hence,  
\beq
	e^{\frac{N}2 (F(\gamma + \ii tN^{-1/2}) - G(\gamma))} 
	= e^{-\frac{F''(\gamma)  t^2}{4}}\left( 1-  \frac{\ii F'''(\gamma) t^3}{12N^{1/2}} + O(N^{-1 + 6\epsilon}) \right) .
\eeq
Therefore, 
\begin{equation} \label{eq:InNes}
\begin{aligned}
	& \int_{-N^{\epsilon}}^{N^{\epsilon}} \frac{e^{\frac{\ii t}{2\Delta}} }{\sqrt{1 + \frac{\ii t}{\Delta}}}
	e^{\frac{N}{2}\left(F(\gamma + \ii t N^{-1/2}) - G(\gamma)\right)} \dd t \\
 &= \int_{-\infty}^{\infty}  \frac{e^{\frac{\ii t}{2\Delta}} }{\sqrt{1+ \frac{\ii t}{\Delta}}} e^{ - \frac{F''(\gamma)}{4}t^2 }  \dd t
 - \frac{\ii F'''(\gamma)}{12N^{1/2}}\int_{-\infty}^{\infty} t^3  \frac{e^{\frac{\ii t}{2\Delta}} }{\sqrt{1 + \frac{\ii t}{\Delta}}}  e^{ - \frac{F''(\gamma)}{4}t^2 } \dd t + O(N^{-1 + 6\epsilon}) \\
  &= \Delta \intc(F''(\gamma)\Delta^2) -  \frac{\ii F'''(\gamma) \Delta^{4}}{12 N^{1/2}} \intc_3(F''(\gamma)\Delta^2) + O(N^{-1 + 6\epsilon}).
 \end{aligned}
\end{equation}
By~\eqref{eq:Fddesmb} and Lemma \ref{cor:s_chi_dis}, $c_1 N^{-\epsilon}\le F''(\gamma)\Delta^2 \le c_2 N^{\epsilon}$. 
Hence, Lemma \ref{lemma:ialpha_limit_behavior} implies that 
\beq \label{eq:FFAAaa}
	\intc(F''(\gamma)\Delta^2) \ge c N^{-\epsilon}
\eeq
for some constant $c>0$. Hence, using Lemma \ref{cor:s_chi_dis}, Lemma \ref{lemma:ialpha_limit_behavior}, and the uniform boundedness of $F'''(\gamma)$, we find that~\eqref{eq:InNes} is equal to 
\begin{equation} \label{eq:InNesCL}
	 \Delta\intc(F''(\gamma)\Delta^2)(1 + O(N^{-\frac12+4\epsilon}))
\end{equation}
if $0<\epsilon<\frac18$. 
Thus, using~\eqref{eq:FFAAaa} and Lemma \ref{cor:s_chi_dis} again, we conclude that 
\begin{equation}
	\int_{\gamma - \ii \infty}^{\gamma + \ii \infty} e^{\frac{N}{2}G(z)} \dd z = \frac{\ii \Delta e^{\frac{N}{2}G(\gamma)}}{\sqrt{N}}\intc(F''(\gamma)\Delta^2)(1 + O(N^{-1/2+ 4\epsilon})).
\end{equation}
\end{proof}

\subsection{Proof of Theorem \ref{thm:free_energy_para_ferro}}

\begin{proof}[Proof of Theorem \ref{thm:free_energy_para_ferro}]
From Lemma \ref{lemma:laplace_tran} and Lemma \ref{lemma:steepest_descent}, for every \(0< \epsilon< \frac18\),
\begin{equation}
	Z_N = C_N \frac{\ii \Delta e^{\frac{N}{2}G(\gamma)}}{\sqrt{N}} \intc(F''(\gamma)\Delta^2)(1 + O(N^{-\frac12 + 4\epsilon}))
\end{equation}
with high probability. Using Stirling's formula,
\begin{equation}
	C_N = \frac{\Gamma(N/2)}{2\pi \ii (N\beta)^{N/2 - 1}} = \frac{\sqrt{N}\beta}{\ii \sqrt{\pi}(2\beta e)^{N/2}}(1 + O(N^{-1})),
\end{equation}
thus we find that \(F_N = \frac{1}{N}\log Z_N\) satisfies
\begin{equation}
\label{eq:F1}
	F_N = \frac{1}{2}(G(\gamma) - 1 - \log(2\beta))  + \frac{1}{N} \left(\log \left( \frac{\beta \Delta}{\sqrt{\pi}} \right)  + \log \intc(F''(\gamma)\Delta^2)  \right) + O(N^{-\frac32 + 4\epsilon})
\end{equation}
with high probability.

Let us consider \(G(\gamma)\). 
Since \(\gamma\) and \(\hat{J}= J+J^{-1}\) are away from \(\lambda_2, \cdots, \lambda_N\) with high probability, 
\begin{equation}\label{eq:thm:free_energy_beta_N_1}
\begin{split}
	\log(\gamma - \lambda_i) 
	&= \log(\hat{J} - \lambda_i)  - \log \left(1-  \frac{\gamma - \hat{J}}{\gamma - \lambda_i} \right) \\
	&= \log(\hat{J} - \lambda_i) + \frac{\gamma - \hat{J}}{\gamma - \lambda_i} + \frac{(\gamma - \hat{J})^2}{2(\gamma - \lambda_i)^2} + O(|\gamma-\hat{J}|^3) 
\end{split} 
\end{equation}
for \(i = 2, \cdots, N\), where we also use that $\gamma-\hat{J} = O(N^{-\frac12+\epsilon})$ with high probability (see Lemma~\ref{cor:s_chi_dis}). 
Then, using Lemma~\ref{lem:Fdre} and the fact that $G'(\gamma)= 2\beta - \frac1{N} \sum_{i=1}^N \frac1{\gamma-\lambda_i}=0$, 
\beqq \begin{split}
	&\frac{1}{N}\sum_{i = 2}^N \log(\gamma - \lambda_i) 
	= \frac{1}{N}\sum_{i = 2}^N \log(\hat{J} - \lambda_i) + 2\beta (\gamma-\hat{J})- \frac{\gamma - \hat{J}}{N(\gamma - \lambda_1)} + \frac{(\gamma - \hat{J})^2}{2(J^2-1)}  + O(N^{-\frac32+3\epsilon}) 
\end{split} \eeqq
with high probability. 
Hence, from the formula of \(G(z)\) in \eqref{eq:sph_line},
\beqq \begin{split}
 	G(\gamma)
	&= 2\beta \hat{J} - \frac{1}{N}\sum_{i = 2}^N \log(\hat{J} - \lambda_i)- \frac{1}{N}\log (\gamma - \lambda_1) + \frac{\gamma-\hat{J}}{N(\gamma-\lambda_1)} 
	- \frac{(\gamma - \hat{J})^2}{2(J^2-1)} + O(N^{-\frac32+3\epsilon}) \\
	&= 2\beta \hat{J} - \frac{1}{N}\sum_{i = 2}^N \log(\hat{J} - \lambda_i)- \frac{1}{N}\log \left( \frac{\Delta}{\sqrt{N}} \right) + \frac{s_N}{N\Delta} 
	- \frac{s_N^2}{2N(J^2-1)} + O(N^{-\frac32+3\epsilon})
\end{split} \eeqq
using the notations $s_N=\sqrt{N}(\gamma-\hat{J})$ and $\Delta=\sqrt{N}(\gamma-\lambda_1)$ in Lemma~\ref{cor:s_chi_dis}. 
Thus, 
\begin{equation}
\begin{aligned}
\label{eq:free_energy_exp}
	F_N =& \beta\hat{J} -\frac{1}{2} - \frac{1}{2}\log(2\beta) - \frac{1}{2N}\sum_{i = 2}^N \log(\hat{J} - \lambda_i) + \frac{1}{4N}\log N  \\
	& + \frac{1}{N} \left( \frac{s_N}{2\Delta}- \frac{s_N^2}{4(J^2-1)} + \frac12 \log \Delta + \log \frac{\beta}{\sqrt{\pi}} + \log \intc(F''(\gamma)\Delta^2)  \right)  +  O(N^{-\frac32 + 4\epsilon}). 
\end{aligned}
\end{equation}
To conclude Theorem \ref{thm:free_energy_para_ferro}, 
we use (i) the fact that $\Delta=s_N-\chi_N$, (ii) the asymptotic~\eqref{eq:gamma_s} of $s_N$ in terms of $\chi_N$,
(iii) the fact that $F''(\gamma)= \frac1{J^2-1} + O(N^{-1+\epsilon})$ which follows from Lemma~\ref{lem:Fdre}, and 
(iv) the fact that $\intc'(\alpha)$ is uniformly bounded for $\alpha>0$ (see Lemma~\ref{lemma:ialpha_limit_behavior}). 
\end{proof}

\section{Partial linear statistics}

This section is devoted to a proof of Theorem \ref{conj:1} on partial linear statistics. 
The proof is a simple modification of \cite{MR3649446} for the linear statistics of all eigenvalues, which, in turn, follows the proof of \cite{Bai2005, BaiSilverstein2010} for the case when the random matrix has zero mean.

\label{sec:linear_statistics}
\subsection{Proof of Theorem \ref{conj:1}} 

Recall $\hat{J} := J + J^{-1}$ denotes the classical location of the largest eigenvalue of a Wigner matrix of non-zero mean. Fix (\(N\)-independent) constants \(a_- < -2\) and \(2 < a_+ < \hat{J}\). Let \(\Gamma\) be the rectangular contour whose vertices are \((a_-\pm \ii v_0)\) and \((a_+ \pm \ii v_0)\) for some \(v_0\in (0,1]\). 
The contour is oriented counter-clockwise. 
For a test function $\varphi(x)$ which is analytic in a neighborhood of \([-2,2]\), we consider 
\begin{equation}
\begin{aligned}
	\mathcal{N}^{(2)}_N(\varphi) :=& \sum_{i = 2}^N \varphi(\eg_i) - N\int_\R \varphi(x)\scl \\
	=& \sum_{i = 2}^N \frac{1}{2\pi \ii}\oint_\Gamma \frac{\varphi(z)}{z - \lambda_i}\mathrm{d}z - \frac{N}{2\pi \ii} \int_\R \oint_\Gamma \frac{\varphi(z)}{z - x}\dd z\scl = -\frac{1}{2\pi i}\oint_\Gamma \varphi(z)\xi_N^{(2)} \dd z,
\end{aligned}
\end{equation}
where 
\begin{equation}
\label{eq:def_xi_N_2}
	\xi_N^{(2)} (z) : = \sum_{i = 2}^N \frac{1}{\eg_i - z} - N\int_\R \frac{1}{x - z}\scl.
\end{equation}
Decompose \(\Gamma\) into \(\Gamma_u \cup \Gamma_d \cup \Gamma_l \cup \Gamma_r \cup \Gamma_0\), where 
\begin{align}
	\Gamma_u =& \{z = x + \ii v_0: a_-\leq x \leq a_+\},\\
	\Gamma_d =& \{z = x - \ii v_0: a_-\leq x \leq a_+\},\\
	\Gamma_l =& \{z = a_- + \ii y: N^{-\delta}\leq |y| \leq v_0\},\\
	\Gamma_r =& \{z = a_+ + \ii y: N^{-\delta}\leq |y| \leq v_0\},\\
	\Gamma_0 =& \{z = a_- + \ii y: |y| < N^{-\delta}\} \cup \{z = a_+ + \ii y: |y| < N^{-\delta}\}
\end{align}
for some \(\delta > 0\). In the proof of Theorem 1.6 in \cite{MR3649446}, the authors showed that 
\begin{equation}
\label{eq:def_xi_N}
	\xi_N(z) := \sum_{i = 1}^N \frac{1}{\eg_i - z} - N \int_\R \frac{1}{x - z}\scl = \xi^{(2)}_N(z) + \frac{1}{\eg_1 - z}
\end{equation}
converges weakly to a Gaussian process with mean $b(z)= b^{(2)}(z)+ \frac{1}{\hat{J} - z}$ and covariance $\Gamma(z_i, z_j)=\Gamma^{(2)}(z_i, z_j)$ where $b^{(2)}(z)$ and $\Gamma^{(2)}(z_i, z_j)$ are given in the proposition below. 
Since for each fixed \(z \in \C_+\), \(\frac{1}{\lambda_1 - z} \rightarrow \frac{1}{\hat{J} - z}\) in probability (by Lemma \ref{lemma:rigidity}), it is natural to expect the following result for a partial sum.

\begin{proposition}
\label{prop:conv_xi_2}
Let
\begin{equation}
\label{eq:def_s}
	s(z) = \int \frac{1}{x - z} \scl = \frac{-z + \sqrt{z^2 - 4}}{2}
\end{equation}
be the Stieltjes transform of the semicircle measure. Fix a constant \(c > 0\) and a path \(\mathcal{K} \subset \C_+\) such that \(\Im z > c\) for  \(z \in \mathcal{K}\). Then the process \(\{\xi_N^{(2)}(z): z \in \mathcal{K}\}\) converges weakly to a Gaussian process with the mean 
\begin{equation}
\label{eq:mean_xi_2}
	b^{(2)}(z) = \frac{s(z)^2}{1 - s(z)^2}\left(-\frac{J}{1 + Js(z)} + (w_2 - 1)s(z) + s'(z)s(z) + (W_4 - 3)s(z)^3 \right) - \frac{1}{\hat{J} - z}
\end{equation}
and the covariance matrix 
\begin{equation}
\label{eq:cov_xi_2}
	\Gamma^{(2)}(z_i, z_j) = s'(z_i)s'(z_j)\left((w_2 - 2) + 2(W_4 - 3)s(z_i)s(z_j) + \frac{2}{(1 - s(z_i)s(z_j))^2}\right).
\end{equation}
\end{proposition}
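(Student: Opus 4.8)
The plan is to deduce Proposition~\ref{prop:conv_xi_2} from the corresponding statement for the full resolvent sum $\xi_N(z)$, which was established in \cite{MR3649446} (and recalled around~\eqref{eq:def_xi_N}). By definition, $\xi_N^{(2)}(z) = \xi_N(z) - \frac{1}{\lambda_1 - z}$, so the whole question reduces to understanding the joint behavior of the Gaussian process $\xi_N(z)$ and the scalar random variable $\frac{1}{\lambda_1-z}$. By the rigidity estimate~\eqref{rigego}, $\lambda_1 = \hat J + O(N^{-1/2+\epsilon})$ with high probability; hence for $z$ with $\Im z > c$ the function $z\mapsto \frac{1}{\lambda_1-z}$ is within $O(N^{-1/2+\epsilon})$ of the deterministic function $z\mapsto \frac{1}{\hat J - z}$, uniformly on the path $\mathcal K$. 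Thus $\frac{1}{\lambda_1-z} \to \frac{1}{\hat J - z}$ in probability in $C(\mathcal K)$.

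First I would note that, since $\xi_N(z)$ converges weakly in $C(\mathcal K)$ to a Gaussian process with mean $b(z) = b^{(2)}(z) + \frac{1}{\hat J - z}$ and covariance $\Gamma^{(2)}(z_i,z_j)$, and since $\frac{1}{\lambda_1-z}$ converges to the constant $\frac{1}{\hat J - z}$ in probability, Slutsky's theorem (in the function-space form) gives that the difference $\xi_N^{(2)}(z) = \xi_N(z) - \frac{1}{\lambda_1-z}$ converges weakly to the same Gaussian process shifted by $-\frac{1}{\hat J - z}$. Concretely, the limiting mean is $b(z) - \frac{1}{\hat J - z} = b^{(2)}(z)$, matching~\eqref{eq:mean_xi_2}, while subtracting a deterministic quantity does not change the covariance, so the limiting covariance remains $\Gamma^{(2)}(z_i,z_j)$ as in~\eqref{eq:cov_xi_2}. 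This is the entire argument at the level of the resolvent process; the only things to check carefully are that the tightness/weak-convergence of $\xi_N$ in $C(\mathcal K)$ (established in \cite{MR3649446}) legitimately combines with the in-probability convergence of $\frac{1}{\lambda_1-z}$, and that the path $\mathcal K$ stays bounded away from the real axis (guaranteed by the hypothesis $\Im z > c$) so that $\frac{1}{\lambda_1-z}$ is uniformly Lipschitz in $z$ and the high-probability bound is uniform on $\mathcal K$.

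The one step requiring genuine care is the derivation of the explicit form~\eqref{eq:mean_xi_2}: I would want to cross-check that the expression $b^{(2)}(z) = \frac{s(z)^2}{1-s(z)^2}\bigl(-\frac{J}{1+Js(z)} + (w_2-1)s(z) + s'(z)s(z) + (W_4-3)s(z)^3\bigr) - \frac{1}{\hat J - z}$ is precisely the mean $b(z)$ of the full process $\xi_N$ (as computed in \cite{MR3649446}) minus $\frac{1}{\hat J - z}$. This is a matter of quoting the formula for $b(z)$ from that paper and verifying the algebraic identity; the subtracted term $\frac{1}{\hat J - z}$ is exactly the spike contribution, which makes sense because removing $\lambda_1$ from the sum removes the deterministic shift caused by the rank-one perturbation. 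The main obstacle, then, is not conceptual but bookkeeping: making sure the normalization conventions for $b(z)$, $\Gamma(z_i,z_j)$, and the signs in $\xi_N$ versus $\xi_N^{(2)}$ line up with \cite{MR3649446} so that the stated formulas are reproduced verbatim.

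An alternative, if one preferred not to invoke the full result as a black box, would be to repeat the contour/resolvent expansion of \cite{Bai2005, BaiSilverstein2010, MR3649446} directly for the partial sum: write $\xi_N^{(2)}(z)$ via the Schur complement isolating the spike direction, use the rank-one perturbation formula to separate the $\lambda_1$ term from the rest of the spectrum, and run the self-consistent equation / CLT machinery on the truncated sum. But this would essentially reconstruct the cited proof, so I would only fall back on it if the Slutsky-type combination turned out to need more than the weak convergence and tightness already recorded. I expect the clean route to suffice, with the bulk of the writeup being the verification that $b(z) - \frac{1}{\hat J-z} = b^{(2)}(z)$ and that the covariance is unchanged.
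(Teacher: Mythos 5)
Your proposal is correct and follows essentially the same route as the paper: write $\xi_N^{(2)}(z)=\xi_N(z)-\frac{1}{\lambda_1-z}$, quote the full-process CLT for $\xi_N$ from \cite{MR3649446}, use rigidity of $\lambda_1$ so that $\frac{1}{\lambda_1-z}\to\frac{1}{\hat{J}-z}$ uniformly on $\mathcal{K}$ (since $\Im z>c$), and conclude by Slutsky, with the mean shifted by $-\frac{1}{\hat{J}-z}$ and the covariance unchanged. The only cosmetic difference is that you absorb tightness into a function-space converging-together argument, whereas the paper, following Billingsley, verifies finite-dimensional convergence and tightness separately (the latter via a H\"older bound on $\zeta_N^{(2)}(z)=\zeta_N(z)-\frac{1}{\lambda_1-z}$).
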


\begin{remark}
	Note that as $z\to \hat{J}$, 
\beq
	\frac{s(z)^2}{1 - s(z)^2}\frac{J}{1 + Js(z)} = \frac{s'(z)}{\frac{1}{J} + s(z)} = \frac{1}{z - \hat{J}} + \frac{s''(\hat{J})}{s'(\hat{J})} + O(z - \hat{J}). 
\eeq
Hence,  \(b^{(2)}(z)\) is analytic near \(\hat{J}\) and thus analytic for \(z \in \C \setminus [-2,2]\).
\end{remark}

In order to complete the proof of Theorem \ref{conj:1}, we will prove the following lemma.

\begin{lemma}
\label{lemma:bound_xi_2}
	Define the events 
	\begin{equation}
		\smallevent := \{\eg_1 \geq \hat{J} - N^{-1/3}, \eg_2 \leq 2 + N^{-1/3}\}
	\end{equation}
	which satisfies \(\P(\smallevent^c) < N^{-D}\) for any fixed (large) \(D > 0\). Then for some \(\delta > 0\),
	\begin{equation}
	\label{eq:bound_xi_2}
		\lim_{v_0\rightarrow 0^+} \limsup_{N \rightarrow \infty} \int_{\Gamma_\#} \E |\xi_N^{(2)}(z)\mathbbm{1}_{\smallevent}|^2\mathrm{d}z = 0,
	\end{equation}
	where \(\Gamma_\#\) can be \(\Gamma_r\), \(\Gamma_l\) or \(\Gamma_0\).
\end{lemma}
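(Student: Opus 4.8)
The goal is to show that the contributions to the contour integral from the portions $\Gamma_r,\Gamma_l$ (vertical segments near $z=a_\pm$ at height $|y|\ge N^{-\delta}$) and $\Gamma_0$ (the small pieces at height $|y|<N^{-\delta}$) are negligible in $L^2$, after truncating onto the good event $\smallevent$. The plan is to bound $\E\,|\xi^{(2)}_N(z)\mathbbm 1_{\smallevent}|^2$ pointwise by a quantity that is integrable against $\dd z$ on each $\Gamma_\#$ and that vanishes in the iterated limit. I would first record that $\xi^{(2)}_N(z)=\xi_N(z)-\frac{1}{\eg_1-z}$, so it suffices to control $\E\,|\xi_N(z)\mathbbm 1_{\smallevent}|^2$ and $\E\,|(\eg_1-z)^{-1}\mathbbm 1_{\smallevent}|^2$ separately. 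On $\smallevent$ we have $\eg_1\ge \hat J - N^{-1/3}$, so $|\eg_1-z|$ is bounded below by a positive constant (since $\Re z=a_\pm<\hat J$), giving $\E|(\eg_1-z)^{-1}\mathbbm 1_{\smallevent}|^2=O(1)$ uniformly; this term integrates to $O(v_0)\to 0$ on $\Gamma_r\cup\Gamma_l$ and to $O(N^{-\delta})\to 0$ on $\Gamma_0$, so it is harmless and the real work is the estimate for $\xi_N$.

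For $\xi_N(z)=\sum_i (\eg_i-z)^{-1}-N\int (x-z)^{-1}\,\dd\sigma_{scl}$, I would use $\mathbbm 1_{\smallevent}$ to replace the true eigenvalue support by a fixed neighborhood of $[-2,2]$, and then invoke the standard local-law / rigidity machinery for Wigner matrices (Lemma~\ref{lemma:rigidity} and the local semicircle law underlying it) exactly as in the proof of Theorem~1.6 in \cite{MR3649446}, which is the ``modification of \cite{Bai2005,BaiSilverstein2010}'' referenced above. Concretely, write $\xi_N(z)=N(m_N(z)-s(z))$ where $m_N(z)=\frac1N\tr(\Wgnm-z)^{-1}$ is the Stieltjes transform of the empirical spectral distribution; the local law gives, for $z=a_\pm+\ii y$ with $y\ge N^{-\delta}$, a bound $\E|N(m_N(z)-s(z))\mathbbm 1_{\smallevent}|^2\le C\big(\frac{1}{y}+ \log N\big)$ or similar (the relevant point being polynomial-in-$1/y$ and hence, after the $\dd z$ integration against the length of $\Gamma_r$ which is $O(v_0)$, the $\Gamma_r$/$\Gamma_l$ contribution is $O(v_0\log(1/v_0))$ up to $N$-independent factors, which vanishes as $v_0\to0^+$ after $\limsup_N$). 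For $\Gamma_0$, where $y<N^{-\delta}$ is very small, one cannot use such a crude bound; instead I would again use $\mathbbm 1_{\smallevent}$ — on $\smallevent$ there is a spectral gap of size $\asymp 1$ separating $a_\pm$ from all eigenvalues $\eg_2,\dots,\eg_N$ and from $\eg_1$ — so in fact $\xi^{(2)}_N(z)$ restricted to $\smallevent$ is, for $z\in\Gamma_0$, a bounded analytic quantity near $z=a_\pm$; its second moment is $O(N^2)$ trivially but more carefully $O(\mathrm{poly})$ uniformly, and the key gain is that the length of $\Gamma_0$ is $O(N^{-\delta})$, so choosing $\delta$ large enough (larger than the polynomial power lost) makes $\int_{\Gamma_0}\E|\xi^{(2)}_N\mathbbm 1_{\smallevent}|^2\,\dd z\to0$ as $N\to\infty$, uniformly in $v_0$.

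The main obstacle is the $\Gamma_0$ estimate: near the real axis the naive second-moment bound for $\xi_N(z)$ blows up like $1/y^2$ or worse, which is not integrable down to $y=0$, so one genuinely needs the rigidity-based spectral-gap argument (the role of $\smallevent$) to get a bound that is merely polynomial in $N$ and then to absorb it using the small length $N^{-\delta}$ of $\Gamma_0$ and the freedom to enlarge $\delta$. I would make sure the choice of $\delta$ is tied to the polynomial degree coming out of the variance estimate in the \cite{MR3649446} proof. Everything else — the $\Gamma_r,\Gamma_l$ pieces and the $\eg_1$ term — follows from standard Stieltjes-transform local-law bounds together with the trivial $O(1)$ control of $(\eg_1-z)^{-1}$ on $\smallevent$, and then letting $v_0\to0^+$ kills the surviving $O(v_0\,\mathrm{polylog})$ factor.
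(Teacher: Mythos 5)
Your overall architecture coincides with the paper's: split $\xi^{(2)}_N(z) = \xi_N(z) - \frac{1}{\eg_1 - z}$, use $\smallevent$ to make $\frac{1}{\eg_1-z}$ an $O(1)$ quantity on the contour, treat $\Gamma_r,\Gamma_l$ by a second-moment bound on the full statistic $\xi_N$, and treat $\Gamma_0$ by the crude pointwise bound $|\xi^{(2)}_N\mathbbm{1}_{\smallevent}|\le CN$ times the length $O(N^{-\delta})$ of $\Gamma_0$, choosing $\delta$ large (the paper takes $\delta>2$). The $\Gamma_0$ part and the $(\eg_1-z)^{-1}$ part of your plan are exactly what the paper does and are fine.

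The gap is in your $\Gamma_r,\Gamma_l$ estimate. You propose a bound of the form $\E|N(m_N(z)-s(z))\mathbbm{1}_{\smallevent}|^2\le C\big(\tfrac{1}{y}+\log N\big)$ and then claim the integral over $\Gamma_r$ is $O(v_0\log(1/v_0))$ up to $N$-independent factors. That computation fails: $\Gamma_r$ extends down to $|y|=N^{-\delta}$, so $\int_{N^{-\delta}\le |y|\le v_0}\frac{\dd y}{|y|}=2(\log v_0+\delta\log N)$, and this (as well as the $v_0\log N$ term coming from your additive $\log N$) blows up under the inner $\limsup_{N\to\infty}$, so the iterated limit in \eqref{eq:bound_xi_2} would not vanish. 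Any bound degenerating like $1/y$ near the real axis, or growing with $N$, is insufficient precisely because $\delta$ must be taken large to kill the $\Gamma_0$ piece. What is actually needed, and what the paper uses, is a $y$-uniform, $N$-independent bound $\E|\xi_N(z)|^2< K$ for $z\in\Gamma_l\cup\Gamma_r$; this is available because $\Re z=a_\pm$ stays at a fixed distance from $[-2,2]$ (and from $\hat J$), and it is exactly Lemma 4.2 of \cite{MR3649446} — the paper notes in a footnote that the proof there is valid for any $\delta>0$, which is what licenses taking $\delta>2$. With that uniform bound the $\Gamma_r\cup\Gamma_l$ contribution is $O(v_0)$, which vanishes as $v_0\to 0^+$, and the lemma follows; with the bound you state it does not. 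So the fix is to replace your local-law-type $1/y$ estimate by the uniform second-moment bound at fixed distance from the spectrum (cited or reproved), keeping the rest of your argument unchanged.
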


From the explicit formulas \eqref{eq:mean_xi_2} and \eqref{eq:cov_xi_2}, it is easy to check that
\begin{equation}
\label{eq:bound_xi_limit}
	\lim_{v_0 \rightarrow 0^+}\int_{\Gamma_\#} \E|\xi^{(2)}(z)|^2 \mathrm{d}z = 0.
\end{equation}
Proposition \ref{prop:conv_xi_2}, Lemma \ref{lemma:bound_xi_2} and \eqref{eq:bound_xi_limit} imply that \(\mathcal{N}_N^{(2)}(\varphi)\) converges in distribution to a Gaussian random variable with the following mean and variance: 
\begin{equation}
	-\frac{1}{2\pi \ii}\oint_\Gamma \varphi(z)b^{(2)}(z)\mathrm{d}z, \qquad \frac{1}{(2\pi \ii)^2} \oint_\Gamma \oint_\Gamma \varphi(z_1)\varphi(z_2)\Gamma(z_1,z_2)\mathrm{d}z_1\mathrm{d}z_2. 
\end{equation}
It is direct to check that these are equal to \(M^{(2)}(\varphi)\) and \(V^{(2)}(\varphi)\) (see Section 4.2 in \cite{MR3649446}). 
We thus obtain Theorem \ref{conj:1}.

\subsection{Proof of Proposition \ref{prop:conv_xi_2}}

From Theorem 7.1 of \cite{Billingsley1968}, we need to show (i) the finite-dimensional convergence of \(\xi^{(2)}_N(z)\) to a Gaussian vector with desired mean and variance, and (ii) the tightness of \(\xi^{(2)}_N(z)\). We will base our proof on the corresponding properties of \(\xi_N(z)\) obtained in \cite{MR3649446}. Let us first recall the limit theorem for \(\xi_N(z)\).

\begin{lemma}[Proposition 4.1 in \cite{MR3649446}]
\label{lemma:ls_full}
Let \(s(z)\) and \(\mathcal{K}\) defined in the same way as in Proposition \ref{prop:conv_xi_2}. Then, the process \(\{\xi_N(z): z \in \mathcal{K}\}\) converges weakly to a Gaussian process \(\{ \xi(z): z\in \mathcal{K}\} \) with the mean 
\begin{equation}
	b(z) = \frac{s(z)^2}{1 - s(z)^2} \left(-\frac{J}{1 + Js(z)} + (w_2 - 1)s(z) + s'(z)s(z) + (W_4 - 3)s(z)^3\right)
\end{equation}
and the covariance matrix 
\begin{equation}
	\Gamma(z_i, z_j) = s'(z_i)s'(z_j)\left((w_2 - 2) + 2(W_4 - 3)s(z_i)s(z_j) + \frac{2}{(1 - s(z_i)s(z_j))^2} \right).
\end{equation}
\end{lemma}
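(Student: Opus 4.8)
The plan is to peel off the rank-one mean perturbation by the Sherman--Morrison identity, reducing the statement to the classical CLT for resolvent linear statistics of a \emph{centered} Wigner matrix, and to check that the perturbation only shifts the limiting mean by a deterministic term. Write $W := \frac{1}{\sqrt N}\Wg$ for the centered Wigner matrix, $R(z) := (W-z)^{-1}$ for its resolvent, and $x := \mathbf{1}/\sqrt{N}$, a fixed unit vector, so that $\Wgnm - z = (W-z) + \tfrac{J}{N}\mathbf{1}\mathbf{1}^{\tran}$. Then
\begin{equation*}
	\tr (\Wgnm - z)^{-1} = \tr R(z) - \frac{J\, x^{\tran} R(z)^2 x}{1 + J\, x^{\tran} R(z) x},
\end{equation*}
hence $\xi_N(z) = \xi_N^{W}(z) - D_N(z)$ where $\xi_N^{W}(z) := \tr R(z) - N s(z)$ and $D_N(z) := J x^{\tran} R(z)^2 x / (1 + J x^{\tran} R(z) x)$. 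Since $s(z)$ satisfies $s^2 + zs + 1 = 0$, differentiating gives $s'(z) = s(z)^2/(1 - s(z)^2)$, so the deterministic limit of $D_N(z)$ should be $D(z) := \tfrac{J s'(z)}{1 + J s(z)} = \tfrac{s(z)^2}{1-s(z)^2}\cdot\tfrac{J}{1+Js(z)}$, which is precisely the term appearing with a minus sign inside $b(z)$. Thus it suffices to prove (i) $D_N(z)\to D(z)$ in probability uniformly on $\mathcal{K}$, and (ii) the Wigner CLT for $\xi_N^{W}$; adding the deterministic $-D(z)$ then converts the Wigner mean into $b(z)$ while leaving the covariance unchanged.

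\textbf{Step (i): the perturbation is a deterministic shift.}
For $z\in\mathcal{K}$ (so $\Im z > c$), standard resolvent concentration estimates for Wigner matrices (in the spirit of, and following from, the local-law inputs behind Lemma~\ref{lemma:rigidity}) give $x^{\tran}R(z)x = s(z) + O(N^{-1/2+\epsilon})$ and $x^{\tran}R(z)^2 x = s'(z) + O(N^{-1/2+\epsilon})$ with high probability, since these flat-vector quadratic forms concentrate around $\frac1N\tr R(z)\to s(z)$ and $\frac1N\tr R(z)^2\to s'(z)$. Because $1 + Js(z)\neq 0$ on $\mathcal{K}$ (the only zero is at $z = \hat{J}\in(2,\infty)$, which lies off $\mathcal{K}$) and $\Im\big(1 + J x^{\tran} R(z) x\big) > 0$ keeps the denominator bounded below with high probability, one gets $D_N(z) = D(z) + O(N^{-1/2+\epsilon})$ with high probability; a crude deterministic bound $|D_N(z)|\le C(1+\|W\|^2)/(\Im z)^3$ together with the finiteness of all moments of $\|W\|$ handles the small-probability complement and also yields $\Exp D_N(z)\to D(z)$. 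Uniformity over the compact $\mathcal{K}$ follows by analyticity and a union bound. Consequently $\xi_N(z)$ and $\xi_N^{W}(z) - D(z)$ have the same weak limit as processes on $\mathcal{K}$, by a Slutsky-type argument since $D_N\to D$ uniformly in probability.

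\textbf{Step (ii): the centered Wigner CLT.}
It remains to show $\xi_N^{W}(z) = \tr R(z) - Ns(z)$ converges weakly on $\mathcal{K}$ to a Gaussian process with mean $b^{W}(z) := \tfrac{s^2}{1-s^2}\big((w_2 - 1)s + s's + (W_4 - 3)s^3\big)$ and covariance $\Gamma(z_i,z_j)$; then $\xi_N^{W} - D$ has mean $b^{W} - D = b$ and covariance $\Gamma$, as claimed. This is the classical CLT for resolvent linear statistics of Wigner matrices (cf.\ \cite{Bai2005,BaiSilverstein2010}), proved along the usual lines: (a) tightness on $\mathcal{K}$ from $\Exp|\xi_N^{W}(z_1) - \xi_N^{W}(z_2)|^2 \le C|z_1-z_2|^2$ and uniformly bounded second moments, both via resolvent perturbation estimates; (b) finite-dimensional convergence of $\xi_N^{W}(z) - \Exp\xi_N^{W}(z) = \sum_{k=1}^N (\Exp_k - \Exp_{k-1})\tr R(z)$, with $\Exp_k$ conditioning on the first $k$ columns, using the martingale CLT — checking the Lyapunov condition $\sum_k \Exp|(\Exp_k - \Exp_{k-1})\tr R|^4 \to 0$ and the convergence of $\sum_k \Exp_{k-1}\big[(\Exp_k - \Exp_{k-1})\tr R(z_i)\,(\Exp_k - \Exp_{k-1})\tr R(z_j)\big]$ to the deterministic $\Gamma(z_i,z_j)$, the latter via the one-column removal formula for $R$, resolvent identities, and the identities $\Exp \Wg_{12}^4 = W_4$, $\Exp \Wg_{11}^2 = w_2$; and (c) the bias $\Exp\tr R(z) - Ns(z)\to b^{W}(z)$ from the next-order term in the self-consistent equation for $\Exp\frac1N\tr R(z)$, again tracking the $w_2$ and $W_4$ corrections.

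\textbf{Main obstacle.}
The heavy part is (b)--(c): the explicit evaluation of the covariance $\Gamma$ and, above all, of the $O(1)$ bias $b^{W}(z)$, which requires careful bookkeeping of the non-universal contributions of $\Exp \Wg_{11}^2 = w_2$ and $\Exp \Wg_{12}^4 = W_4$ through the martingale conditional-variance computation and through the resolvent self-consistent equation. By contrast, the Sherman--Morrison reduction and the concentration of $D_N(z)$ are routine once the local law for $W$ is available. (An alternative is to run the martingale-difference scheme directly on $\Wgnm$ without extracting the perturbation; the $\tfrac{J}{N}$ means then enter the minor-removal formulas and reproduce the same deterministic shift $D(z)$ while contributing nothing to the fluctuations, as the perturbation is a fixed finite-rank matrix — this is essentially the route taken in \cite{MR3649446}.)
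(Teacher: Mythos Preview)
The paper does not prove this lemma at all: it is simply quoted verbatim as Proposition~4.1 of \cite{MR3649446} and used as a black box in the proof of Proposition~\ref{prop:conv_xi_2}. So there is no ``paper's own proof'' to compare against here.

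That said, your sketch is a correct and clean way to establish the result. The Sherman--Morrison reduction is exactly right: the rank-one perturbation contributes the deterministic shift $D(z)=\tfrac{Js'(z)}{1+Js(z)}=\tfrac{s(z)^2}{1-s(z)^2}\cdot\tfrac{J}{1+Js(z)}$ to the mean and nothing to the covariance, and the remaining piece is the standard Bai--Yao CLT for the centered Wigner resolvent trace. Your identification of the heavy step --- the explicit computation of the Wigner bias $b^{W}(z)$ and covariance $\Gamma(z_i,z_j)$ via the martingale-difference/self-consistent-equation machinery of \cite{Bai2005,BaiSilverstein2010} --- is accurate. As you yourself note in the final parenthetical, the original proof in \cite{MR3649446} runs the martingale scheme directly on $\Wgnm$ rather than first stripping the mean; the two routes are equivalent in difficulty, and your reduction has the minor expository advantage of isolating the non-zero-mean contribution as a single deterministic term.
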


Let \(z_1, z_2, \cdots, z_p\) are \(p\) distinct points in \(\mathcal{K}\). 
The above lemma implies that the random vector $(\xi_N(z_i))_{i = 1}^p$ 
converges weakly to a \(p\)-dimensional Gaussian distribution with the mean $(b(z_i))_{i = 1}^p$
and the covariance matrix \(\Gamma(z_i, z_j)\). 
Since the distance between $\mathcal{K}$ and $\eg_1$ is bounded below, \(\frac{1}{\eg_1 - z_i} \rightarrow \frac{1}{\hat{J} - z_i}\) in probability for \(i = 1, \cdots, p\). 
Hence, by Slutsky's theorem, $(\xi_N^{(2)}(z_i))_{i = 1}^p$
converges weakly to a \(p\)-dimensional Gaussian distribution vector with the mean \((b^{(2)}(z_i))_{i = 1}^p\) and the covariance matrix \(\Gamma^{(2)}(z_i, z_j)\), where
\begin{equation}
	b^{(2)}(z) = b(z) - \frac{1}{\hat{J} - z},
\end{equation} 
and \(\Gamma^{(2)}(z_i, z_j) = \Gamma(z_i, z_j)\).

From Theorem 12.3 of \cite{Billingsley1968}, in order to show the tightness of a random process \((\zeta_N(z))_{z \in \mathcal{K}}\), it is sufficient to show that (i) \((\zeta_N(z))_N\) is tight for a fixed \(z\), and (ii) the following H\"{o}lder condition holds: for some \(N\)-independent constant \(K > 0\),
\begin{equation}
	\E|\zeta_N(z_1) - \zeta_N(z_2)|^2 \leq K|z_1 - z_2|^2, \qquad z_1, z_2 \in \mathcal{K}.
\end{equation} 
In \cite{MR3649446}, the authors considered the random process \(\zeta_N(z):= \xi_N(z) - \E[\xi_N(z)]\), and proved that it satisfies conditions (i) and (ii). Now, we consider \(\xi_N^{(2)}(z) := \zeta^{(2)}_N + \E[\xi_N(z)]\), where \(\zeta^{(2)}_N(z) := \zeta_N(z) - \frac{1}{\lambda_1 - z}\). 
Since \(\E[\xi_N(z)]\) converges, it is enough to check that \((\zeta^{(2)}_N(z))_N\) satisfies conditions (i) and (ii). 
Now for a fixed \(z\),  the tightness of $(\zeta_N(z))_N$ and the boundedness of \(\frac{1}{\eg_1 - z}\) imply that \((\zeta^{(2)}_N(z))_N\) is tight. 
On the other hand, since \(\zeta_N(z)\) satisfies the H\"{o}lder condition and \(\Im z \geq c\) for \(z \in \mathcal{K}\),
\begin{equation}
\begin{aligned}
	\E|\zeta_N^{(2)}(z_1) - \zeta_N^{(2)}(z_2)|^2 
	&\leq 2\E|\zeta_N(z_1) - \zeta_N(z_2)|^2 + 2\E \left| \frac{1}{\eg_1 - z_1} - \frac{1}{\eg_1 - z_2} \right|^2 \\
	& \leq 2 K|z_1 - z_2|^2 + \frac{2|z_1 - z_2|^2}{c^{4}} = \left(K + \frac{2}{c^{4}} \right)|z_1 - z_2|^2.
\end{aligned}
\end{equation}
Thus \(\{\xi_N^{(2)}(z), z\in \mathcal{K}\}\) is tight. This completes the proof of Proposition \ref{prop:conv_xi_2}.

\subsection{Proof of Lemma \ref{lemma:bound_xi_2}}

For \(z \in \Gamma_0\), we notice that \(|\xi^{(2)}_N\mathbbm{1}_{\smallevent}|\leq CN\) and then
\begin{equation}
	\int_{\Gamma_0} \E|\xi^{(2)}_N\mathbbm{1}_{\smallevent}|^2 \leq CN^{2-\delta}.
\end{equation}
Thus \eqref{eq:bound_xi_2} holds for \(\Gamma_0\) with \(\delta > 2\). 
For \(\Gamma_r\) and \(\Gamma_l\), it is sufficient to show \(\E|\xi^{(2)}_N|^2 < K\) for some \(N\)-independent constant \(K > 0\). 
The authors in \cite{MR3649446} 
showed\footnote{Even though it is stated in Lemma 4.2 of \cite{MR3649446} that the lemma holds for sufficiently small $\delta > 0$, the proof of it is valid for any \(\delta > 0\), and we use \(\delta > 2\) for our purpose.} that  \(\E|\xi_N(z)|^2 < K\).
Hence, for \(z \in \Gamma_r\),
\begin{equation}
|\xi^{(2)}_N(z)\mathbbm{1}_{\smallevent}|^2 \leq 2|\xi_N(z)\mathbbm{1}_{\smallevent}|^2 + 2\left|\frac{1}{\eg_1 - z}\mathbbm{1}_{\smallevent}\right|^2. 
\end{equation}
The lemma then follows from the fact that \(|\frac{1}{\eg_1 - z}\mathbbm{1}_{\smallevent}|\) is bounded.

\section{Joint Distribution of \(\chi_N\) and \(\mathcal{N}_N^{(2)}(\varphi)\)}
\label{sec:asy_gau}

As before, let $\Wg$ be a random symmetric matrix of size $N$ whose entries are (up to the symmetry condition) independent centered random variables satisfying Definition \ref{def:Wgnm}. 
Let $\Wgnm= \frac1{\sqrt{N}} \Wg +  \frac{J}{N} \mathbf{1} \mathbf{1}^T$ where $J>1$. 
Let $\lambda_1\ge\cdots \ge \lambda_N$ be the eigenvalues of $\Wgnm$.

Let $\chi_N = \sqrt{N} (\lambda_1-\hat{J})$ denoting the rescaled largest eigenvalue. 
Given an analytic function $\varphi(x)$, recall the partial linear statistics \(\mathcal{N}_N^{(2)}(\varphi)= \sum_{i = 2}^N \varphi(\lambda_i) - N\int_{-2}^2 \varphi(x)\scl \).
We saw in the previous sections that $\chi_N$ and $\mathcal{N}_N^{(2)}(\varphi)$ converge individually to Gaussian random variables. 
In this section, we consider the joint distribution and prove Theorem \ref{prop:asy_ind_ls_eg1}. 
In Subsection \ref{subsec:asy_ind}, we first prove Theorem \ref{prop:asy_ind_ls_eg1} assuming that the disorder variables are Gaussian random variables.
In Subsection \ref{subsec:asy_gau}, the general disorder variables are considered using an interpolation trick.

\subsection{Asymptotic Independence for the GOE case}
\label{subsec:asy_ind}

Let the off-diagonal entries of $\Wg$ be Gaussian random variables of variance $1$ and the diagonal entries be Gaussian random variables of variance $2$. 
In random matrix theory, the random symmetric matrix $H = \frac1{\sqrt{N}} \Wg$ is said to belong to the Gaussian orthogonal ensemble (GOE).
A special property of GOE, compared with general random symmetric matrices, is that the probability measure of GOE is invariant under orthogonal conjugations. 

The following result is basically in \cite{Capitaine2009}. 

\begin{lemma}
\label{lemma:limit_thm_quad}
Let \((\frac{1}{\sqrt{2}}\Wg_{ii}, \Wg_{ij}, y_i)_{1\leq i < j \leq N}\) be i.i.d. standard Gaussian random variables.
Let  \( H = \frac{1}{\sqrt{N}} \Wg\) with $\Wg=(\Wg_{ij})_{1\le i,j\le N}$ and let \(Y =\frac{1}{\sqrt{N}} (y_1,\cdots, y_N)^T\).
Define \(G(z) = (H - zI)^{-1}\) for \(z \in \C \backslash [-2 - \delta,2 + \delta]\), which is well defined with high probability for fixed \(\delta > 0\). 
Then, for $z \in \R \backslash [-2-\delta, 2 + \delta]$,
\beq
\label{eq:def_quad}
    n_N(z) := \sqrt{N}(Y^*G(z)Y - \frac{1}{N}\mathrm{Tr}(G(z))) \Rightarrow n(z)， 
\eeq
where $n = n(z) := \mathcal{N}\left(0,2\int\frac{\scl}{(x - z)^2}\right)$ is a Gaussian random variable.
\end{lemma}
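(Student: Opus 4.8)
\textbf{Proof proposal for Lemma \ref{lemma:limit_thm_quad}.}
The plan is to compute the limit of $n_N(z)$ directly via a CLT for quadratic forms in independent Gaussians, with $H$ treated as a fixed (random) matrix and then averaged over. First I would diagonalize $H = U \diag(\lambda_1,\dots,\lambda_N) U^{T}$; since the $y_i$ are i.i.d.\ standard Gaussians divided by $\sqrt N$ and GOE is orthogonally invariant, the vector $\sqrt N\, U^{T} Y$ is again a standard Gaussian vector, so conditionally on $H$ we have
\begin{equation*}
    Y^{*} G(z) Y = \frac{1}{N} \sum_{i=1}^{N} \frac{g_i^2}{\lambda_i - z},
\end{equation*}
where $g_1,\dots,g_N$ are i.i.d.\ $\mathcal{N}(0,1)$, independent of $\{\lambda_i\}$. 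Thus
\begin{equation*}
    n_N(z) = \frac{1}{\sqrt N} \sum_{i=1}^{N} \frac{g_i^2 - 1}{\lambda_i - z},
\end{equation*}
which is a normalized sum of independent (conditionally on the spectrum) mean-zero random variables with summand variances $\tfrac{2}{(\lambda_i-z)^2}$.

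The next step is to apply the Lindeberg--Feller CLT to this conditional sum. The conditional variance is $\frac{2}{N}\sum_i (\lambda_i - z)^{-2}$, which by the rigidity estimate of Lemma~\ref{lemma:rigidity} (or already by the law of large numbers in Corollary~\ref{coro:law_of_large_number}, since $z$ is bounded away from $[-2,2]$ and $\lambda_1\to\hat J$, contributing only $O(1/N)$) converges almost surely and in probability to $2\int \frac{\scl}{(x-z)^2}$; note the $\lambda_1$ term is of order $1/N$ and drops out. The Lindeberg condition is immediate because the summands $(g_i^2-1)/(\lambda_i - z)$ are uniformly bounded in $L^{2+\epsilon}$ after dividing by $\sqrt N$ — the denominators are uniformly bounded below on $\Gamma$-type sets, and the Gaussian chaos variables $g_i^2 - 1$ have all moments. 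Hence, conditionally on a high-probability event for the spectrum, $n_N(z)\Rightarrow \mathcal{N}(0, 2\int \frac{\scl}{(x-z)^2})$; since the limiting law does not depend on the conditioning, unconditional convergence follows (e.g.\ by dominated convergence applied to characteristic functions, $\E[e^{\ii t n_N(z)}\mid H] \to e^{-\frac{t^2}{2}\cdot 2\int (x-z)^{-2}\scl}$ a.s.\ and is bounded by $1$).

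The only subtlety — and the step I would treat most carefully rather than as routine — is ensuring the reduction $\sqrt N\, U^{T}Y \overset{d}{=} (g_i)$ together with the \emph{independence} of $(g_i)$ from $\{\lambda_i\}$ is legitimate: this uses exactly the orthogonal invariance of GOE, i.e.\ that $U$ (the eigenvector matrix) is Haar-distributed on $O(N)$ and independent of the eigenvalues, and that $Y$ is rotationally invariant in law, so $U^{T}Y \overset{d}{=} Y$ even after conditioning on $U$. One should also note $Y$ is independent of $H$ to begin with, which is given. Everything else — handling the $\tfrac{1}{\sqrt 2}\Wg_{ii}$ normalization so the diagonal has variance $2$, confirming $G(z)$ is well defined with high probability on the stated $z$-set, and the passage from finite-dimensional to the one-point statement — is standard bookkeeping. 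This argument is essentially the one in \cite{Capitaine2009}; the statement here is the special single-point, real-$z$ version needed later, so I would simply cite that reference for the general principle and spell out the short conditional-CLT computation above.
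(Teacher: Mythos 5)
Your proof is correct, but it takes a different route from the paper's. The paper's proof of this lemma is a one-step citation: it invokes Theorem 5.2 of \cite{Capitaine2009}, which gives the CLT for $\sqrt{N}\bigl(Y^*G(z)Y - \frac{1}{N}\Tr G(z)\bigr)$ once one verifies three conditions on the resolvent --- $\norm{G}\leq a$ with high probability, convergence in probability of $\frac{1}{N}\Tr G^2$, and of $\frac{1}{N}\sum_i G_{ii}^2$ --- which are checked via rigidity, Corollary~\ref{coro:law_of_large_number}, and the local law, respectively. You instead give a self-contained argument: diagonalize $H$, use orthogonal invariance of GOE plus rotational invariance of $Y$ to write $n_N(z)=\frac{1}{\sqrt N}\sum_i\frac{g_i^2-1}{\lambda_i-z}$ with $(g_i)$ i.i.d.\ standard Gaussians independent of the spectrum, and run a conditional Lindeberg CLT with the conditional variance converging to $2\int\frac{\scl}{(x-z)^2}$. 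This is legitimate, and in fact it is essentially the same computation the paper carries out later inside the proof of Proposition~\ref{prop:fluct_ind}, where the conditional characteristic function $\prod_i \exp\bigl(-\tfrac12\log(1-\tfrac{2t}{\sqrt N(\rho_i-z)})-\tfrac{t}{\sqrt N(\rho_i-z)}\bigr)$ is analyzed on a high-probability spectral event and then integrated. The trade-off: the citation route needs no invariance and so works for non-Gaussian $Y$ and general matrices satisfying the three resolvent conditions, whereas your route is elementary and transparent but leans on the Gaussian/GOE structure --- which is all the lemma requires, so this is fine here (and indeed that invariance is the very property exploited in Section~\ref{subsec:asy_ind}). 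Two small points: in this lemma $H$ is the unspiked GOE, so the top eigenvalue tends to $2$, not $\hat J$ (harmless, since that single term is $O(1/N)$ either way and the denominators are bounded below by $\delta$ on the stated $z$-set); and almost-sure convergence of the conditional variance is not needed --- convergence in probability plus the bounded-characteristic-function/dominated-convergence step you describe already yields the unconditional limit.
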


\begin{proof}[Proof of Lemma \ref{lemma:limit_thm_quad}]
We follow the idea presented in \cite{Capitaine2009}. 
By Theorem 5.2 of \cite{Capitaine2009}, it is enough to check the following three conditions for $G$: (i) There exists an $N$-independent constant $a$ such that $\norm{G} \leq a$ with high probability, (ii) $\frac{1}{N}\Tr G^2$ converges to a constant in probability, and (iii) $\frac{1}{N}\sum_{i = 1}^N G_{ii}^2$ converges to a constant in probability. 
They follow from rigidity of eigenvalue (Lemma \ref{lemma:rigidity}), law of large numbers (Corollary \ref{coro:law_of_large_number}), and local law (Theorem 2.9 of \cite{MR3098073}), respectively. 
\end{proof}

We are now ready to prove the following property of GOE matrices.  
\begin{proposition}
\label{prop:fluct_ind}
For $H$ defined in Lemma \ref{lemma:limit_thm_quad}, denote its eigenvalues by
$
    \rho_1 \geq \rho_2 \geq \cdots \geq \rho_N.
$
For fixed $k$, consider a random vector $(X_{N}^1, X^2_N, \cdots, X^k_N)$ whose entries are real measurable functions of those eigenvalues, i.e., $X_N^i = X_N^i(\rho_1, \rho_2, \cdots, \rho_N)$ for $i = 1,2,\cdots, k$. 
Suppose there is a random vector $(X^i)_{i = 1}^k$ such that $(X_{N}^i)_{i = 1}^k \Rightarrow (X^i)_{i = 1}^k$  as $N \rightarrow \infty$. 
Then for $n_N$ and $n$ defined as in \eqref{eq:def_quad}, $(X_{N}^1, X^2_N, \cdots, X^k_N, n_N) \Rightarrow (X^1, X^2\cdots, X^k, n)$, where $n$ is independent from $(X^1, X^2, \cdots, X^k)$.
\end{proposition}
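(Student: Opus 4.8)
The plan is to exploit the orthogonal invariance of the GOE. The key observation is that the quadratic form $n_N(z) = \sqrt{N}(Y^*G(z)Y - \frac1N \Tr G(z))$ depends on the relationship between the random test vector $Y$ and the eigenvectors of $H$, while the vector $(X_N^1,\dots,X_N^k)$ is a function of the eigenvalues of $H$ alone. Write the spectral decomposition $H = U \Lambda U^{\tran}$ where $U$ is the (random) orthogonal matrix of eigenvectors and $\Lambda = \diag(\rho_1,\dots,\rho_N)$. Then $Y^* G(z) Y = (U^{\tran} Y)^{\tran} (\Lambda - zI)^{-1} (U^{\tran} Y)$. First I would recall that for GOE, conditionally on the eigenvalues, the eigenvector matrix $U$ is Haar-distributed on the orthogonal group $O(N)$, and moreover $U$ is independent of $\Lambda$. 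Since $Y$ is an independent Gaussian vector (with i.i.d.\ $\mathcal{N}(0,1/N)$ entries), its law is rotationally invariant, so $U^{\tran} Y \stackrel{d}{=} Y$ and, crucially, $\widetilde Y := U^{\tran} Y$ is independent of $(\Lambda, U)$, hence independent of the eigenvalues. Therefore $n_N(z) = \widetilde Y^{\tran}(\Lambda - zI)^{-1}\widetilde Y - \frac1N\Tr(\Lambda - zI)^{-1}$, rescaled by $\sqrt N$, is — conditionally on the eigenvalues — a centered quadratic form in an independent Gaussian vector.

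Next I would make the asymptotic independence precise. For a fixed $z \in \R \setminus [-2-\delta, 2+\delta]$, consider the joint characteristic function $\E\big[ e^{\ii \sum_j t_j X_N^j} e^{\ii u \, n_N(z)} \big]$. Condition on the $\sigma$-algebra generated by the eigenvalues: the factor $e^{\ii \sum_j t_j X_N^j}$ becomes measurable, and we are left with $\E\big[ e^{\ii u\, n_N(z)} \,\big|\, \rho_1,\dots,\rho_N \big]$. By Lemma~\ref{lemma:limit_thm_quad} (and its proof via Theorem 5.2 of \cite{Capitaine2009}), the conditions (i)--(iii) there hold with high probability as deterministic-in-the-limit statements about the empirical spectral measure, so this conditional characteristic function converges, on the high-probability event where $\|H\|$ and the relevant traces are controlled, to $e^{-\frac{u^2}{2} \cdot 2\int \frac{\scl}{(x-z)^2}}$, which is deterministic and equal to the characteristic function of $n(z)$. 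Hence $\E\big[ e^{\ii \sum_j t_j X_N^j} e^{\ii u n_N(z)} \big] = \E\big[ e^{\ii \sum_j t_j X_N^j} \big( e^{-u^2 \int \frac{\scl}{(x-z)^2}} + o(1) \big) \big] \to \E\big[ e^{\ii \sum_j t_j X^j} \big] \cdot e^{-u^2 \int \frac{\scl}{(x-z)^2}}$, using the assumed convergence $(X_N^j) \Rightarrow (X^j)$ and boundedness of the exponentials. This factorization of the limiting joint characteristic function is exactly the statement that $(X^1,\dots,X^k,n)$ has $n$ independent of $(X^1,\dots,X^k)$, and it gives the joint convergence claimed.

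Two technical points would need care, and I expect the handling of the conditioning to be the main obstacle. First, the convergence in Lemma~\ref{lemma:limit_thm_quad} is stated unconditionally, whereas here I need it conditionally on the eigenvalues; the resolution is that the proof only uses that $\frac1N \Tr G^2$ and $\frac1N \sum_i G_{ii}^2$ converge to explicit constants and that $\|G\|$ is bounded — but once we condition on the eigenvalues of $H$, the matrix $G = (H-zI)^{-1}$ is \emph{not} determined (only its eigenvalues are), so one must instead argue directly with the conditional quadratic form $\widetilde Y^{\tran}(\Lambda-zI)^{-1}\widetilde Y$ whose conditional law depends only on the eigenvalues; by the same CLT for quadratic forms in i.i.d.\ Gaussians applied to the fixed diagonal matrix $(\Lambda - zI)^{-1}$, the conditional limit is $\mathcal N(0, 2 \cdot \frac1N\sum_i (\rho_i - z)^{-2})$, and $\frac1N\sum_i(\rho_i-z)^{-2} \to \int \frac{\scl}{(x-z)^2}$ by the law of large numbers (Corollary~\ref{coro:law_of_large_number} together with $\rho_1$ control). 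Second, one should confirm that $n_N(z)$ computed from $G(z)$ literally equals the quadratic form in $\widetilde Y = U^{\tran}Y$; this is immediate since $\Tr G(z) = \Tr(\Lambda - zI)^{-1}$ and $Y^*G(z)Y = \widetilde Y^{\tran}(\Lambda-zI)^{-1}\widetilde Y$. With these in hand the characteristic-function argument closes the proof, and since the $X_N^j$ are arbitrary functions of the eigenvalues, the proposition applies in particular to $(X_N^1, X_N^2) = (\chi_N, \mathcal N_N^{(2)}(\varphi))$ in the GOE case, which is the use made of it in Subsection~\ref{subsec:asy_ind}.
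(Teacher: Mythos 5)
Your proposal is correct and follows essentially the same route as the paper: diagonalize $H$, use the orthogonal invariance of the GOE together with the rotational invariance of $Y$ to replace $Y^*G(z)Y$ by a quadratic form in a Gaussian vector independent of the eigenvalues, then condition on the eigenvalues, show the conditional characteristic function of $n_N(z)$ converges to the deterministic Gaussian one on a high-probability event via the law of large numbers (Corollary~\ref{coro:law_of_large_number}), and conclude by factorizing the joint characteristic function. The only cosmetic difference is that the paper computes the conditional characteristic function explicitly from $\E[e^{tx_1^2}]=(1-2t)^{-1/2}$ and Taylor expands, rather than citing a conditional CLT for quadratic forms, but the substance of the argument is the same.
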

\begin{proof}
For the convergence, it is enough to show (i) $(X_{N}^1, X^2_N\cdots, X^k_N, n_N)$ is tight, and (ii) convergence of characteristic function. 
The tightness follows from the tightness of individual random vector (variable), which is a consequence of individual convergence. 

For (ii), consider  the eigenvalue decomposition $H = OPO^T$, where $P = \mathrm{diag}(\rho_1, \rho_2, \cdots, \rho_N)$ and $O$ is an orthogonal matrix. 
Since the $H$ is orthogonal invariant, $P$ and $O$ are independent. 
Set $X = O^T Y$. Then \(X = \frac{1}{\sqrt{N}}(x_1,\cdots, x_N)\) where \(x_1, \cdots, x_N\) are i.i.d standard Gaussian (\(X\) is also independent with $P$).

Now, \(n_N = Y^*G(z)Y - \frac{1}{N}\mathrm{Tr}G(z)= \frac{1}{N}\sum_{i = 1}^N \frac{x_i^2 - 1}{\rho_i - z}\). 
Since $\E[ e^{t x_1^2}] = \frac1{\sqrt{1-2t}}$, we find that for any $t \in \ii\R$, the conditional expectation over $X$ given $P$ satisfies 
\beqq
    \E_X \big[ e^{ tn_N} \big| P \big] = \E_X \big[ e^{\frac{t}{\sqrt{N}}\sum_{i = 1}^N \frac{x_i^2 - 1}{\rho_i - z}} \big| P \big]
    = \prod_{i = 1}^N e^{-\frac{1}{2}\log (1 - \frac{2t}{\sqrt{N}(\rho_i - z)}) - \frac{t}{\sqrt{N}(\rho_i - z)}}.
\eeqq
Note that $(X^1_N, X^2_N, \cdots, X^k_N)$ only depends on the eigenvalues, and hence it is independent of $X$. 
Thus, for any $u_1, u_2, \cdots, u_k, t \in \ii\R$,
\begin{equation}
\label{eq:prop_fluct_ind_1}
\begin{aligned}
     \E \left[e^{\sum_{j = 1}^k u_jX^j_N + tn_N}\right] 
    =& \E \left[e^{\sum_{j = 1}^k u_jX^j_N}  \prod_{i = 1}^N e^{-\frac{1}{2}\log (1 - \frac{2t}{\sqrt{N}(\rho_i - z)}) - \frac{t}{\sqrt{N}(\rho_i - z_2)}}   \right].
\end{aligned}
\end{equation}
Since $ -\frac12 \log(1-2z) - z = z^2+ O(z^3) $ as $z\to 0$, using Corollary \ref{coro:law_of_large_number},
\beq
\label{eq:prop_fluct_ind_2} 
\begin{split}
    \prod_{i = 1}^N e^{-\frac{1}{2}\log (1 - \frac{2 t}{\sqrt{N}(\rho_i - z)}) - \frac{t}{\sqrt{N}(\rho_i - z)}}
    &= e^{ \frac1{N} \sum_{i=1}^N \frac{t^2}{(\rho_i - z)^2} + O(N^{-\frac{1}{2}}) } 
    = e^{t^2 \int \frac{1}{(x-z)^2} \scl + O(N^{-\frac{1}{2}}) } \\
    &=  \E\left[e^{tn(z)}\right] e^{O(N^{-\frac{1}{2}})} 
\end{split} \eeq
with high probability. Denote this high probability event by $\Omega_N$. Then,
\beq
\begin{aligned}
    \lim_{N \rightarrow \infty} \E \left[e^{\sum_{j = 1}^k u_jX^j_N + tn_N}\right] =& \lim_{N \rightarrow \infty} \left( \E \left[e^{\sum_{j = 1}^k u_jX^j_N + tn_N} \big |\Omega_N\right] \P(\Omega_N) + \E \left[e^{\sum_{j = 1}^k u_jX^j_N + tn_N}\big |\Omega_N^c\right] \P(\Omega_N^c) \right) \\
    =& \E\left[e^{\sum_{j = 1}^k u_jX^j}\right]\E\left[ e^{tn(z)}\right],
\end{aligned}
\eeq
since $t, u_1, u_2, \cdots, u_k\in \ii\R$ and hence all exponents are pure imaginary.
Note that the characteristic function of $(X^1, \cdots, X^k, n)$ is equal to the product of the characteristic functions of individual random vector (variable). 
Thus $n(z)$ is independent from $(X^1, \cdots, X^k)$. This completes the proof. 
\end{proof}
\begin{corollary}
\label{coro:fluct_ind}
Fix $\delta > 0$, consider $z_1 \in \C \backslash \R$ and $z_2 \in \R \backslash [-2-\delta, 2 + \delta]$. 
Recall $s(z)$ defined in \eqref{eq:def_s}. 
Then $(\Tr(G(z_1))-Ns(z_1), n_N(z_2))$ converges in distribution to independent Gaussian random variables. 
\end{corollary}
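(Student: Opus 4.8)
The plan is to obtain this as an immediate consequence of Proposition \ref{prop:fluct_ind}. The crucial observation is that $\Tr(G(z_1)) - N s(z_1) = \sum_{i=1}^N \frac{1}{\rho_i - z_1} - N s(z_1)$ is a function of the eigenvalues $\rho_1 \ge \cdots \ge \rho_N$ of $H$ alone, hence precisely of the type of eigenvalue functional to which Proposition \ref{prop:fluct_ind} applies. Since $z_1 \in \C \setminus \R$ this quantity is complex, while Proposition \ref{prop:fluct_ind} is phrased for real eigenvalue functionals; so I would first split it into its real and imaginary parts, $X_N^1 := \re\bigl(\Tr(G(z_1)) - N s(z_1)\bigr)$ and $X_N^2 := \im\bigl(\Tr(G(z_1)) - N s(z_1)\bigr)$, each a real measurable function of $(\rho_1,\dots,\rho_N)$.

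First I would invoke the classical central limit theorem for linear eigenvalue statistics of GOE matrices (see \cite{Bai2005, BaiSilverstein2010}): for the resolvent test function $x \mapsto (x-z_1)^{-1}$ at the fixed point $z_1$ away from $\R$, one gets joint convergence $(X_N^1, X_N^2) \Rightarrow (X^1, X^2)$ to a real bivariate Gaussian vector. Then I would apply Proposition \ref{prop:fluct_ind} with $k = 2$ and spectral parameter $z = z_2$; since $z_2 \in \R \setminus [-2-\delta, 2+\delta]$ the hypotheses are met, and one concludes $(X_N^1, X_N^2, n_N(z_2)) \Rightarrow (X^1, X^2, n(z_2))$ with $n(z_2)$ independent of $(X^1, X^2)$. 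Recombining, $\Tr(G(z_1)) - N s(z_1) = X_N^1 + \ii X_N^2 \Rightarrow X^1 + \ii X^2$, a Gaussian random variable independent of the Gaussian $n(z_2)$, which is the assertion.

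There is essentially no obstacle here beyond bookkeeping: the statement is a corollary in the literal sense. The only points that warrant a line of care are (i) citing (or, if one prefers to stay self-contained, re-deriving via the argument behind Lemma \ref{lemma:ls_full} specialized to the spikeless case, the bulk resolvent statistic being insensitive to the bounded-rank perturbation when the test point stays a fixed distance from the spectrum) the GOE linear-statistics CLT in a form giving joint convergence of the real and imaginary parts of the resolvent statistic, and (ii) the trivial but necessary reduction from the complex functional at $z_1$ to the pair of real functionals before Proposition \ref{prop:fluct_ind} can be invoked.
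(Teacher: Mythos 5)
Your proposal is correct and follows essentially the same route as the paper: split $\Tr(G(z_1))-Ns(z_1)$ into real and imaginary parts, recognize these as linear eigenvalue statistics (with test functions $\frac{x-E}{(x-E)^2+\eta^2}$ and $\frac{\eta}{(x-E)^2+\eta^2}$), obtain their joint Gaussian limit from the Bai--Yao CLT, and then apply Proposition \ref{prop:fluct_ind} with $k=2$ to get joint convergence with $n_N(z_2)$ and the asymptotic independence.
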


\begin{proof}
Note that $\Tr(G(z_1)) - Ns(z_1)$ is complex, we consider the random vector $(\Re(\Tr(G(z_1)) - Ns(z_1)), \Im(\Tr(G(z_1)) - Ns(z_1)))$. 
By Proposition \ref{prop:fluct_ind}, it is enough to show that $(\Re(\Tr(G(z_1)) - N(s_1)), \Im(\Tr(G(z_1)) - Ns(z_1))$ converges to a Gaussian random vector. 
Consider the expression $z_1 = E + i\eta$ for $\epsilon, \eta\in \R$ and $\eta \neq 0$. 
Recalling the definition of linear statistics $\mathcal{N}_N(\varphi)$ defined in \eqref{eq:def_ls}, we have 
$$
    \Re(\Tr(G(z_1) - Ns(z_1))) = \mathcal{N}_N(\varphi_r), \quad \ \varphi_r(x) = \frac{x - E}{(x - E)^2 + \eta^2},
$$
and 
$$
    \Im(\Tr(G(z_1) - Ns(z_1)) = \mathcal{N}_N(\varphi_i), \quad \ \varphi_i(x) = \frac{\eta}{(x - E)^2 + \eta^2}.
$$
That is, they are both linear statistics. Then Corollary then follows from Theorem 1.1 of \cite{Bai2005}.
\end{proof}

\begin{remark}
\label{re:coro:fluct_ind}
When we prove Theorem \ref{prop:asy_ind_ls_eg1} for GOE, we use Proposition \ref{prop:fluct_ind} and Corollary \ref{coro:fluct_ind} with $N$-dependent $z_i$. 
First, for a fixed $z_2 \in \R \backslash [-2-\delta, 2 + \delta]$ for some $\delta>0$, let $\tilde{z}_2 =\tilde{z}_2(N) := \sqrt{\frac{N + 1}{N}}z_2$.
Using the exactly same argument in the proof of Lemma \ref{lemma:limit_thm_quad}, one can show $n_N(\tilde{z}_2) \Rightarrow n(z_2)$. 
Since the \eqref{eq:prop_fluct_ind_2} still holds for $\tilde{z}_2$ and $n(z_2)$, the asymptotic independence in Proposition \ref{prop:fluct_ind} is still valid, i.e.
\beqq
    (X_{N}^1, X^2_N, \cdots, X^k_N, n_N(\tilde{z}_2)) \Rightarrow (X^1, X^2\cdots, X^k, n(z_2)),
\eeqq
where $n(z_2)$ is independent from $(X^1, X^2, \cdots, X^k)$.
Second, for $z_1 \in \C \backslash \R$, consider $\tilde{z}_1 = \tilde{z}_1(N) := \sqrt{\frac{N + 1}{N}}z_1$. Notice that
\beqq
    \frac{1}{x - \tilde{z}_1} = \frac{1}{x - z_1} + \frac{z_1}{2N(x - z_1)^2} + O(N^{-2}).
\eeqq
Then, by the discussion in Remark \ref{rem:ls_convfcn}, $\Tr(G(\tilde{z}_1)) - Ns(\tilde{z}_1) = \mathcal{N}_N(\frac{1}{x - \tilde{z}_1})$ converges to a Gaussian random variable. 
Now, putting together, for $\tilde{z}_1$ and $\tilde{z}_2$ defined as above, $(\Tr(G(\tilde{z}_1))-Ns(\tilde{z}_1), n_N(\tilde{z}_2))$ converge jointly to independent Gaussian random variables.
\end{remark}

We now prove Theorem \ref{prop:asy_ind_ls_eg1} for the case where the disorder belongs to GOE. 

\begin{proof}[Proof of Theorem \ref{prop:asy_ind_ls_eg1} when $A$ belongs to GOE]
Recall that $\lambda_i$ are the eigenvalues of $\Wgnm= \frac1{\sqrt{N}} \Wg +  \frac{J}{N} \mathbf{1} \mathbf{1}^T$ with $\Wg$ from the GOE. 
Since the means and variances follow from \cite{Capitaine2009} and Theorem \ref{conj:1}, it is enough to prove the asymptotic independence of $\chi_N$ and $\mathcal{N}_N^{(2)}(\varphi)$. 
(Notice that that \(W_3 = 0\) for Gaussian $A_{ij}$.)
Now, for any analytic test function $\varphi$, the partial linear statistics can be expressed as (see \eqref{eq:def_xi_N_2}) an integral of 
\begin{equation}
      \xi^{(2)}_N(z) =  \sum_{i = 2}^N \frac{1}{\eg_i - z} - N \int_\R \frac{1}{x - z}\scl, \quad z\in \C\backslash \R . 
\end{equation} 
Then according to Lemma \ref{lemma:bound_xi_2} and what follows, it is enough to prove that $\chi_N$ and $\xi_N^{(2)}(z)$ are asymptotically independent for fixed $z \in \C \setminus \R$.
Let $$\xi_N(z)= \xi_N^{(2)}(z) + \frac{1}{\eg_1 - z}  = \Tr(\Wgnm - zI)^{-1} - Ns(z).$$
Since $ \frac{1}{\eg_1 - z} \to \frac{1}{\hat{J} - z}$ in probability, it is enough to prove that $\chi_N$ and $\xi_N(z)$ are asymptotically independent.

Since the GOE is orthogonal invariant, for every deterministic matrix $U$, the eigenvalues of $\Wg+U$ have the same distribution as $\Wg+OU O^T$ for any orthogonal matrix $O$. 
Thus, we may consider the following equivalent model:
\begin{equation} \label{eq:MpertJ11}
    \Wgnm = \frac{1}{\sqrt{N}}\Wg + \mathrm{diag}(J, 0, \cdots,0). 
\end{equation}    
Following the proof of Theorem 2.2 in \cite{Capitaine2009}, we write 
\begin{equation}
\label{eq:block}
    \Wgnm = \begin{bmatrix} \frac{\Wg_{11}}{\sqrt{N}} + J & Y^* \\ Y & \hat{\Wgnm} \end{bmatrix}.
\end{equation}
Since $\det(\Wgnm-zI) = \det(\hat{\Wgnm}-zI) \left( \frac{\Wg_{11}}{\sqrt{N}}  +J - z - Y^*\hat{G}(z)Y \right)$ with 
\beq
    \hat{G}(z):= (\hat{\Wgnm} - zI_{N-1})^{-1}= ( \frac1{\sqrt{N}} \hat{\Wg} - zI_{N-1})^{-1}, 
\eeq 
the largest eigenvalue of $\Wgnm$ satisfies
\begin{equation}
    \eg_1 = J + \frac{\Wg_{11}}{\sqrt{N}} - Y^*\hat{G}(\eg_1)Y
\end{equation}
if $\lambda_1$ is not an eigenvalue of \(\hat{\Wgnm}\), which holds with high probability. 
Using the resolvent formula twice, we write 
\beqq \begin{split}
    \hat{G}(\lambda_1) 
    &= \hat{G}(\hat{J}) +( \hat{G}(\lambda_1) - \hat{G}(\hat{J}) )
    = \hat{G}(\hat{J}) + (\lambda_1-\hat{J}) \hat{G}(\lambda_1) \hat{G}(\hat{J}) \\
    &= \hat{G}(\hat{J}) + (\lambda_1-\hat{J}) \hat{G}(\hat{J})^2 + (\lambda_1-\hat{J})^2  \hat{G}(\lambda_1) \hat{G}(\hat{J})^2 . 
\end{split} \eeqq
Hence,
\beqq \begin{split}
    \lambda_1 - \hat{J} &=  \frac{\Wg_{11}}{\sqrt{N}} - \frac1{J}- Y^*\hat{G}(\eg_1)Y \\
    &= \frac{\Wg_{11}}{\sqrt{N}} - \frac1{J}- Y^*\hat{G}(\hat{J}) Y + (\lambda_1-\hat{J}) Y^* \hat{G}(\hat{J})^2 Y + (\lambda_1-\hat{J})^2  Y^* \hat{G}(\lambda_1) \hat{G}(\hat{J})^2 Y 
\end{split} \eeqq
with high probability. Moving all terms with factor $\lambda_1-\hat{J}$ to the left and taking it out as a common factor, we arrive at 
\begin{equation}
\label{eq:thm_fluct_ind_2}
    \chi_N = \sqrt{N}(\lambda_1 - \hat{J}) = \frac{\Wg_{11} - \sqrt{N}(\frac1{J} + Y^*\hat{G}(\hat{J})Y )}{1 + Y^*\hat{G}(\hat{J})^2Y + (\lambda_1 - \hat{J})Y^*\hat{G}(\lambda_1)\hat{G}    (\hat{J})^2Y} 
\end{equation}
with high probability. 

Note that \(\hat{\Wgnm}\) and \(Y\) satisfy the setting of Corollary \ref{coro:fluct_ind}  
up to the scaling factor \(\sqrt{\frac{N}{N-1}}\).
Set
\begin{equation}
\label{eq:thm_fluct_ind_1}
    \tilde{Y} = \sqrt{\frac{N}{N - 1}}Y, \qquad 
    \tilde{G}(z) = \big( \sqrt{\frac{N}{N - 1}} \hat{\Wgnm} - zI_{N - 1} \big)^{-1}
\end{equation}
Then, \(\tilde{Y}\) and \(\tilde{G}\) satisfy the setting of Corollary \ref{coro:fluct_ind}, and
\begin{equation}
    Y^*\hat{G}(\hat{J})Y = \sqrt{\frac{N - 1}{N}} \tilde{Y}^*\tilde{G}(\tilde{J})\tilde{Y}, 
    \qquad \tilde{J}:=  \sqrt{\frac{N}{N-1}} \hat{J}.
\end{equation}
Now, by Corollary \ref{coro:law_of_large_number}, 
\beq
\label{eq:thm_fluct_ind_5}
    \frac{1}{N - 1}\Tr(\tilde{G}(\tilde{J})) = s(\hat{J}) + O(N^{-1+\epsilon}) = -\frac{1}{J} + O(N^{-1+\epsilon})
\eeq
with high probability. By Lemma \ref{lemma:limit_thm_quad}, Corollary \ref{coro:law_of_large_number} and Lemma \ref{lemma:rigidity}, 
\beq
\label{eq:thm_fluct_ind_3}
    Y^*\hat{G}(\hat{J})^2 Y \rightarrow \frac{1}{J^2 - 1}, \qquad (\lambda_1 - \hat{J})Y^* \hat{G}(\lambda_1)\hat{G}(\hat{J})^2 Y \rightarrow 0 
\eeq
in probability. 
Using \eqref{eq:thm_fluct_ind_3}, \eqref{eq:thm_fluct_ind_5} and denoting the denominator in \eqref{eq:thm_fluct_ind_2} by $D_1$, we write
\begin{equation}    
\label{eq:thm_fluct_ind_6}
\chi_N = D_1^{-1} \left(\Wg_{11} - \tilde{n}_{N-1}(\tilde{J}) + O(N^{-\frac{1}{2} + \epsilon}) \right),
\end{equation}
where $n_{N - 1}(\tilde{J}) = \sqrt{N - 1}(\tilde{Y}^*\tilde{G}(\tilde{J})\tilde{Y} - \frac{1}{N - 1}\mathrm{Tr}(\tilde{G}(\tilde{J})))$ (see \eqref{eq:def_quad}) and $D_1 \rightarrow \frac{J^2}{J^2 - 1}$ in probability. Note that $\Wg_{11}$ and $n_{N - 1}(\tilde{J})$ are independent, the distribution of $\chi_N$ is governed by their convolution.

We now turn to the linear statistic $\xi_N(z)$. Using Schur complement of $\Wgnm$ with block structure in \eqref{eq:block}, for any \(z \in \C \backslash \R\),
\begin{equation}
\begin{aligned}
    \mathrm{Tr}(\Wgnm - zI)^{-1} =& (J + \frac{\Wg_{11}}{\sqrt{N}} - z - Y^*\hat{G}(z)Y)^{-1}(1 + Y^*\hat{G}(z)^2Y) + \mathrm{Tr}(\hat{G}(z))
\end{aligned}
\end{equation}
Using Lemma \ref{lemma:limit_thm_quad} and Lemma \ref{coro:law_of_large_number}, 
\beqq
    D_2 = D_2(N) :=  \frac{1 + Y^*\hat{G}(z)^2Y}{J + \frac{\Wg_{11}}{\sqrt{N}} - z - Y^*\hat{G}(z)Y} \rightarrow \frac{1 + s'(z)}{J - z - s(z)}
\eeqq
in probability. Then, by setting $\tilde{z} := \tilde{z}(N) = \sqrt{\frac{N}{N - 1}}z$, we write
\begin{equation}
\begin{aligned}
\label{eq:thm_fluct_ind_4}
    \xi_N(z) =& \mathrm{Tr}(\Wgnm - zI)^{-1} - Ns(z) = D_2 + \mathrm{Tr}\hat{G}(z) - Ns(z) + O(N^{-\frac12 + \epsilon}) \\
=& D_2 - \frac{s(z)}{2} + \frac{zs'(z)}{2} + \sqrt{\frac{N}{N-1}}\left(\mathrm{Tr}\tilde{G}(\tilde{z}) - (N-1)s(\tilde{z})\right) + O(N^{-\frac12 + \epsilon}).
\end{aligned}
\end{equation}
That is, the fluctuation of $\xi_N(z)$ is govern by $\mathrm{Tr}\tilde{G}(\tilde{z}) - (N-1)s(\tilde{z})$.  
Now using Corollary \ref{coro:fluct_ind} and Remark \ref{re:coro:fluct_ind}, one can conclude that $(\mathrm{Tr}\tilde{G}(\tilde{z}) - (N-1)s(\tilde{z}), n_{N - 1}(\tilde{J}))$ converge to independent Gaussian random variables. 
Furthermore, \(\Wg_{11}\) is independent of both \(Y\) and \(\hat{\Wgnm}\). 
Thus by \eqref{eq:thm_fluct_ind_6} and \eqref{eq:thm_fluct_ind_4}, \((\xi_N(z), \chi_N)\) converge to independent random variables. 
Theorem \ref{prop:asy_ind_ls_eg1} then follows. 
\end{proof}

\subsection{Proof of Theorem \ref{prop:asy_ind_ls_eg1} for general case}
\label{subsec:asy_gau}

We prove Theorem \ref{prop:asy_ind_ls_eg1} for general disorders, where the disorder matrix $\Wg$ is a Wigner matrix and satisfies Definition \ref{def:Wgnm}.
Unlike the GOE, Wigner matrices are not orthogonal invariant, hence we cannot apply~\eqref{eq:MpertJ11} where we replaced the rank-$1$ perturbation in $\Wgnm$ by a diagonal matrix.
To overcome the difficulty, we use an interpolation method. It has been successfully applied in many works in random matrix theory, where a given matrix and a reference matrix such as GOE are interpolated. We refer to \cite{Lytova2009} for its application in the analysis of linear eigenvalue statistics. 

Let $\sWg = \frac{1}{\sqrt{N}}A$ be a (normalized) Wigner matrix and $\sGOE$ be a (normalized) GOE matrix independent from $\sWg$. 
Define
\beq
  \Wgint(t) =  \sWg\cos t +  \sGOE\sin t
\eeq
so that $\Wgint(0) = \sWg$ and $\Wgint(\frac{\pi}{2}) = \sGOE$. Note that $\E[\Wgint_{ij}^2] = \frac{1}{N}$ for $i \neq j$. 
Let 
\beq
    \bse = \frac{1}{\sqrt N} \mathbf{1}^T = \frac{1}{\sqrt N} (1, 1, \dots, 1)^T \in \mathbb{R}^N 
\eeq
and
\beq
  \Wgnm(t) = \Wgint(t) + J \bse \bse^T,
\eeq
whose eigenvalues are denoted by $\eg_1 \geq \eg_2 \geq \cdots \geq \eg_N$. Define the resolvents
\beq \label{eq:resolvent}
  \rWgnm(z) = (\Wgnm-zI)^{-1}, \qquad \rWg(z) = (\Wgint-zI)^{-1}.
\eeq
Here, we omit the dependence on $t$ for the ease of notation.
We note that $\rWgnm$ and $\rWg$ are symmetric (not Hermitian).
For any (small) fixed \(\delta > 0\), $\hat{G}(z)$ is well-defined for $z \in \C \setminus [-2-\delta, 2+\delta]$ with high probability.

For $\chi_N = \sqrt{N}(\eg_1 - \hat{J})$, we notice that
\beq
  \rWg_{\bse \bse}(\lambda_1) := \langle \bse, \rWg (\lambda_1) \bse \rangle = -\frac{1}{J}
\eeq
with high probability. The claim holds since
\beq \begin{split}
  0= \det(\Wgnm - \lambda_1 I) &= \det (\Wgint - \lambda_1 I) \det(I + J \rWg(\lambda_1) \bse \bse^T) \\
  &= \det (\Wgint-\lambda_1 I) \det(I + J \bse^T \rWg(\lambda_1) \bse ) = \det (\Wgint-\lambda_1 I) \left( 1 + J \rWg_{\bse \bse}(\lambda_1) \right)
\end{split} 
\eeq
and $\lambda_1$ is not an eigenvalue of $\Wgint$ with high probability (See Lemma 6.1 of \cite{KnowlesYin2011}). Furthermore, by Taylor expansion,
\beq
  -\frac{1}{J} = \rWg_{\bse \bse}(\lambda_1) = \rWg_{\bse \bse}(\hat J) + \rWg'_{\bse \bse}(\hat J) (\lambda_1 - \hat J) + O(N^{-1+\epsilon})
\eeq
with high probability, since $|\lambda_1 - \hat J| = O(N^{-\frac{1}{2}+\epsilon})$  and $\| \rWg''(z) \| = O(1)$ with high probability. From the isotropic local law, Theorem 2.2 of \cite{KnowlesYin2011}, we find that
\beq
  \rWg_{\bse\bse}(\hat J) = s(\hat J) + O(N^{-\frac12 +\epsilon}), \qquad \rWg'_{\bse \bse}(\hat J) = s'(\hat J) + O(N^{-\frac{1}{2}+\epsilon})
\eeq
with high probability. Thus, using Lemma \ref{lemma:limit_thm_quad},
\beq
\label{eq:fluct_1_quad}
  \chi_N = \sqrt{N}(\lambda_1 - \hat J) = -\frac{\sqrt{N}(J^{-1} + \rWg_{\bse \bse}(\hat J))}{s'(\hat J)} + O(N^{-\frac{1}{2} + 2\epsilon})
\eeq
with high probability. That is, the behavior of $\chi_N$ is governed by the fluctuation of $\rWg_{\bse \bse}(\hat J)$.

To prove the Theorem \ref{prop:asy_ind_ls_eg1}, as in the Gaussian disorder case, it is enough to show the convergence of the joint distribution of $\chi_N$ and the full linear statistics $\xi_N(z) = \Tr(G(z)) - Ns(z)$ for fixed $z \in \C \setminus \R$.
Under the light of \eqref{eq:fluct_1_quad}, we set out to calculate the following characteristic function involving $\xi_N(z)$ and $\rWg_{\bse \bse}(\hat{J})$.
Explicitly, for \(t_1, t_2, t_3 \in \ii\R\) and \(z = E + \ii\eta\) with \(E \in \R\) and \(\eta > 0\), we define
\beq
  \E\left[e^{\expo(t)}\right] := \E \left[ e^{t_1 \Re \xi_N + t_2 \Im \xi_N + t_3 n_N} \right], \qquad \quad \expo (t) := t_1 \Re \xi_N(z) + t_2 \Im \xi_N(z) + t_3 n_N,
\eeq
where
\beq
  \qquad  n_N = \sqrt{N} \left( \rWg_{\bse \bse}(\hat J) + \frac{1}{J} \right).
\eeq
Note that \(n_N\) is real, the exponent \(\expo(t)\) is pure imaginary and thus \(|e^{\expo(t)}| \leq 1\). 
For our purpose, it is desired to estimate \(\E[e^{\expo (0)}]\). At $t=\frac{\pi}{2}$, the disorder \(\Wgint(\frac{\pi}{2})\) reduces to the GOE case. From Subsection \ref{subsec:asy_ind}, $\chi_N$ and $\xi_N$ are asymptotically independent in the GOE case, then
\beq
    \lim_{N \rightarrow \infty}\E\left[e^{\expo(\frac{\pi}{2})}\right] = \E\left[e^{t_1\Re{\xi} + t_2 \Im \xi} \right]\cdot \E\left[e^{t_3n}\right]
\eeq
for some Gaussian random variables \(\xi, n\) with known mean and variance.
Thus, it only remains to estimate the $t$-derivative of \(\E[e^{\expo(t)}]\). Here, we recall the following identity for the derivative of the resolvent \(\rWgnm\). For $i,j,a,b = 1,2,\cdots,N$,
\begin{equation}
\label{eq:res_deri}
  \frac{\partial}{\partial \Wgnm_{ij}}\rWgnm_{ab} = -\beta_{jk}(\rWgnm_{aj}\rWgnm_{kb} + \rWgnm_{ak}\rWgnm_{jb})
\end{equation}
with 
\begin{equation}
  \beta_{jk} = \begin{cases} 1 & j \neq k, \\ 1/2 & j = k.\end{cases} 
\end{equation}
We note that the above identity also holds if one replace \(\rWgnm\) by \(\rWg\). Thus for any fixed event $\Omega$,
\beq 
\label{eq:t_derivative}
\begin{split}
  \frac{\dd}{\dd t} \E \left[ e^{\expo(t)} \vert \Omega\right] &= \E \left[ \sum_{i\leq j} \frac{\dd \Wgnm_{ij}}{\dd t} \frac{\partial}{\partial \Wgnm_{ij}} e^{\expo(t)} \Bigg \vert \Omega\right] \\
    &= \sum_{i, j} \E \left[ \left(  \sWg_{ij}\sin t - \sGOE_{ij} \cos t\right) \left( t_1 \Re\left( \rWgnm^2\right)_{ij} + t_2 \Im \left(\rWgnm^2\right)_{ij} +\frac{t_3}{\sqrt N} \sum_{p, q} \rWg_{pi} \rWg_{jq} \right) e^{\expo(t)} \Big \vert \Omega\right].
\end{split} 
\eeq
The reason for the introduction of $\Omega$ will be revealed in a minute. The right hand side of \eqref{eq:t_derivative} motivates us to apply the generalized Stein's lemma. More precisely, we will use Proposition 3.1 of \cite{Lytova2009} with a small modification as follows:

\begin{proposition}
\label{prop:stein}
Given an event $\Omega$, let \(X\) be a random variable such that \(\E[|X|^{p + 2}\vert \Omega] < \infty\) for a certain non-negative integer \(p\). Denote the conditional cumulants of $X$ by $\kappa_l := \kappa_l(\Omega)$, $l = 1, \dots, p + 1$. Then for any function \(\Phi : \R \rightarrow \C\) of the class \(C^{p + 1}\) with bounded derivatives \(\Phi^{(l)}, l = 1, \dots, p + 1\), we have
\begin{equation} \label{eq:stein_main}
  \E[X \Phi(X) \vert \Omega] = \sum_{l = 0}^p \frac{\kappa_{l + 1}}{l!}\E[\Phi^{(l)}(X) \vert \Omega] + \epsilon_p,
\end{equation}
where the remainder term \(\epsilon_p\) admits the bound
\begin{equation}
\label{eq:stein_residue}
  |\epsilon_p| \leq C_p \E \left[|X|^{p + 2} \left( 1+ \max_{1\leq j\leq p+1} \left( \int_0^1 |\Phi^{(p+1)}(vX)| \dd v \right)^{\frac{p+2}{j}} \right)\Bigg \vert \Omega \right]
\end{equation}
for some constant $C_p$ that depends only on $p$.
\end{proposition}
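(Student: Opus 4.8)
The plan is to deduce the statement from the unconditional cumulant expansion of Lytova and Pastur, Proposition~3.1 of \cite{Lytova2009}, by passing to the conditional probability measure. We may assume $\P(\Omega) > 0$, since otherwise every conditional quantity in the statement is vacuous. Set $\P_\Omega(\cdot) := \P(\cdot \cap \Omega)/\P(\Omega)$, which is a genuine probability measure on the same measurable space, with associated expectation $\E_{\P_\Omega}[\,\cdot\,] = \E[\,\cdot \mid \Omega]$. Under $\P_\Omega$, the hypothesis $\E[|X|^{p+2} \mid \Omega] < \infty$ says exactly that $X$ has finite $(p+2)$-th absolute moment; consequently the first $p+1$ cumulants of the law of $X$ under $\P_\Omega$ are finite, and by definition these are the conditional cumulants $\kappa_1(\Omega), \dots, \kappa_{p+1}(\Omega)$ in the statement. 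Since $\Phi \in C^{p+1}$ has bounded derivatives $\Phi^{(1)}, \dots, \Phi^{(p+1)}$, we have $|\Phi(x)| \le |\Phi(0)| + \|\Phi'\|_\infty |x|$, so $|X\Phi(X)| \le C(1 + |X|^2)$ is $\P_\Omega$-integrable; likewise each $\Phi^{(l)}$, $0 \le l \le p$, grows at most linearly, so all expectations occurring in \eqref{eq:stein_main} and \eqref{eq:stein_residue} are well defined.

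With these observations in hand, I would apply Proposition~3.1 of \cite{Lytova2009} verbatim to the random variable $X$ on the probability space equipped with $\P_\Omega$: it yields precisely the identity \eqref{eq:stein_main} with remainder bounded as in \eqref{eq:stein_residue} (the remainder there having exactly this integral shape), where every $\E$ is read as $\E_{\P_\Omega}$ and every $\kappa_l$ as the $l$-th cumulant of $X$ under $\P_\Omega$. Re-reading $\E_{\P_\Omega} = \E[\,\cdot \mid \Omega]$ and $\kappa_l = \kappa_l(\Omega)$ gives the claimed formulas, with the same universal constant $C_p$.

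For completeness I would also recall the skeleton of the unconditional argument, so that it is transparent that conditioning changes nothing. One writes Taylor's formula with integral remainder, $\Phi(x) = \sum_{l=0}^{p} \frac{\Phi^{(l)}(0)}{l!} x^l + \frac{x^{p+1}}{p!}\int_0^1 (1-v)^p \Phi^{(p+1)}(vx)\,\dd v$, multiplies by $x$, and takes the conditional expectation; by Fubini this expresses $\E[X\Phi(X) \mid \Omega]$ as a linear combination of the conditional moments $\E[X^{l+1} \mid \Omega]$, $0 \le l \le p$, plus a term controlled by $\E[\,|X|^{p+2}\int_0^1 |\Phi^{(p+1)}(vX)|\,\dd v \mid \Omega]$. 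Substituting the classical moment--cumulant relations and expanding each $\E[\Phi^{(l)}(X) \mid \Omega]$ by Taylor's formula as well, one matches the two sides up to contributions that are products of conditional cumulants of total weight $p+2$ paired with bounded expectations of derivatives of $\Phi$; H\"older's inequality applied to these products yields exactly the exponents $\tfrac{p+2}{j}$ in \eqref{eq:stein_residue}. The only point that needs checking is that none of these steps --- Taylor's theorem, Fubini, H\"older, the algebraic moment--cumulant identities --- is sensitive to whether the ambient measure is $\P$ or $\P_\Omega$, and since they are purely measure-theoretic and combinatorial this is automatic. Hence there is no genuine obstacle; the substance of the lemma lies entirely in the unconditional estimate of \cite{Lytova2009}, and the ``small modification'' amounts only to this change of reference measure.
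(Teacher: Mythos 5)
There is a genuine gap in your main route. Proposition~3.1 of \cite{Lytova2009} does \emph{not} have a remainder of the integral shape \eqref{eq:stein_residue}: its error term is bounded by $C_p\,\E[|\xi|^{p+2}]\,\sup_{t\in\R}|\Phi^{(p+1)}(t)|$, i.e.\ by the global sup norm of the top derivative. So applying it ``verbatim'' under the conditional measure $\P_\Omega$ only yields \eqref{eq:stein_main} with $|\epsilon_p|\le C_p\,\E[|X|^{p+2}\mid\Omega]\,\|\Phi^{(p+1)}\|_\infty$, which is not the claimed bound. The passage to $\P_\Omega$ is indeed harmless (and this is why the paper can speak of conditional cumulants at all), but it is not the content of the proposition: the whole point of the ``small modification'' is to replace $\|\Phi^{(p+1)}\|_\infty$ by the random quantity $\int_0^1|\Phi^{(p+1)}(vX)|\,\dd v$ evaluated along the segment $[0,X]$, inside the conditional expectation and raised to the powers $\frac{p+2}{j}$. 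This stronger form is exactly what is used later in Section~\ref{sec:asy_gau}: there $\Phi$ involves resolvents such as $\rWg(\hat J)$ at a real point, whose entries admit no useful deterministic sup-norm bound over all matrix configurations, but are $O(1)$ along the relevant values on the high-probability event $\Omega$. With the sup-norm version of the remainder the argument in \eqref{eq:t_derivative_stein} would not close.

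Your ``for completeness'' paragraph does gesture at the right argument, and it is essentially the paper's proof: split $\Phi=\pi_p+r_p$ with $\pi_p$ the degree-$p$ Taylor polynomial, use the exact cumulant identity $\E[X\pi_p(X)\mid\Omega]=\sum_{j\le p}\frac{\kappa_{j+1}}{j!}\E[\pi_p^{(j)}(X)\mid\Omega]$ (valid for any probability measure, hence for $\P_\Omega$), write $r_p(X)=\frac{X^{p+1}}{p!}\int_0^1\Phi^{(p+1)}(vX)(1-v)^p\,\dd v$, bound $|\kappa_j|\le(2j)^j\E[|X|^j\mid\Omega]$, and apply H\"older/Young to the products $\E[|X|^{l+1}\mid\Omega]\cdot\E\bigl[|X|^{p+1-l}\int_0^1|\Phi^{(p+1)}(vX)|\,\dd v\mid\Omega\bigr]$ to produce the exponents $\frac{p+2}{j}$ in \eqref{eq:stein_residue}. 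But in your write-up this is presented as an optional recap subordinate to the incorrect citation claim, and the key estimates (the bound on the derivatives $r_p^{(l)}$, the cumulant--moment bound, and the H\"older step) are not carried out. As it stands the proposal does not establish \eqref{eq:stein_residue}; you need to promote the sketched remainder analysis to the actual proof rather than relying on the cited proposition for the shape of the error term.
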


\begin{proof}
We basically follow the proof of Proposition 3.1 of \cite{Lytova2009}. Let $\pi_p$ be the degree $p$ Taylor polynomial of $\Phi$ and let $r_p = \Phi - \pi_p$. Then, as in the proof of Proposition 3.1 of \cite{Lytova2009},
\beq
    \E[X \pi_p(X)\vert \Omega] = \sum_{j=0}^p \frac{\kappa_{j+1}}{j!} \E[\pi_p^{(j)}(X)\vert \Omega].
\eeq
Thus
\beq
    \left|\E[X \Phi(X)\vert \Omega] - \sum_{l=0}^p \frac{\kappa_{l+1}}{l!}\E[\Phi^{(l)}(X)\vert \Omega] \right| \leq \left| \E[X r_p(X)\vert \Omega]\right| + \sum_{l = 0}^p \frac{|\kappa_{l + 1}|}{l!}\left|\E \left[r_p^{(l)}(X)\vert \Omega\right] \right|.
\eeq
Since
\beq
    r_p(X) = \frac{X^{p+1}}{p!} \int_0^1 \Phi^{(p+1)}(vX) (1-v)^p \dd v,
\eeq
by the estimate $|\kappa_j| \leq (2j)^j \E[|X|^j\vert \Omega]$ and H\"older's inequality,
\beq
\begin{aligned}
\label{eq:prop:stein_1}
    \sum_{l = 0}^p \frac{|\kappa_{l + 1}|}{l!}\left|\E \left[r_p^{(l)}(X)\vert \Omega\right] \right| \leq &
    \sum_{l=0}^p \frac{\kappa_{l+1}}{l!(p-l)!} \E \left[ |X|^{p+1-l} \int_0^1 |\Phi^{(p+1)}(vX)| \dd v \Big \vert \Omega\right] 
    \\ \leq & \sum_{l=0}^p \frac{(2l+2)^{l+1}}{l!(p-l)!} \E \left[ |X|^{p+2} \left( 1 + \left( \int_0^1 |\Phi^{(p+1)}(vX)| \dd v \right)^{\frac{p+2}{p+1-l}} \right) \Bigg \vert \Omega\right].
\end{aligned}
\eeq
As $\left| \E[X r_p\vert \Omega]\right|$ can also be bounded by the right hand side of \eqref{eq:prop:stein_1}, the proof is complete.

\end{proof}
In order to apply Proposition \ref{prop:stein} to \eqref{eq:t_derivative}, we need prior bounds of $\expo(t)$ and its derivatives to bound $\epsilon_p$ in \eqref{eq:stein_main}.
As we will see later, it is enough to bound $\rWgnm_{ij}$, $(\rWgnm^2)_{ij}$, $\rWg_{ij}$ and $\sum_{p}\rWg_{ip}$.
In the following, we are going to introduce a high probability event $\Omega$, on which we have the desired bounds.

With the trivial bound \(\norm{\rWgnm} \leq \frac{1}{\eta}\) (recall that $z = E + i\eta$), we have that \(|\rWgnm_{ij}| \leq \frac{1}{\eta}\) and
$
  \left| (\rWgnm^2)_{ij}\right|\leq \norm{\rWgnm^2} \leq \frac{1}{\eta^2}.
$
For \(\rWg_{ij}\), we introduce the high probability event \(\Omega_1 = \{\lambda_1 \leq (2 + \hat{J})/2\}\). It is easy to check that \( \| \rWg \| \mathbbm{1}_{\Omega_1} \leq \frac{1}{\hat{J} - 2}\) and thus 
\beq \label{eq:hat_G_hat_J}
|\rWg_{ij}\mathbbm{1}_{\Omega_1}| \leq \frac{1}{\hat{J} - 2},
\eeq
For \(\sum_{p}\rWg_{ip}\), we recall the following concentration theorem for the quadratic function of \(\rWg\):

\begin{proposition}[Theorem 2.3 and Remark 2.4 of \cite{KnowlesYin2011}] 
\label{prop:iso_law}
Fix \(\Sigma \geq 3\). Set \(\varphi = (\log N)^{\log \log N}\). Then there exist constants \(C_1\) and \(C_2\) such that for any 
\[
  E \in [\Sigma, -2-\varphi^{C_1}N^{-\frac23}] \cup [2 + \varphi^{C_1}N^{-\frac23}, \Sigma],
\]
and any \(\eta \in (0,\Sigma]\), and any deterministic \(v, w \in \C^N\),
\beq
\label{eq:blinear_local_law}
  |\langle v, \rWg(z)w\rangle - s(z)\langle v, w \rangle| \leq \varphi^{C_2}\sqrt{\frac{\Im s(z)}{N\eta}}\norm{v}\norm{w}
\eeq
with high probability, uniformly on $z = E+\ii \eta$.
\end{proposition}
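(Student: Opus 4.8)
I present a plan for proving the isotropic local law of Proposition \ref{prop:iso_law}. This is the bilinear (``isotropic'') version of the local semicircle law, and is exactly the content of \cite{KnowlesYin2011}; so the plan is to indicate how it follows from the \emph{entrywise} local law together with the cumulant‑expansion machinery already developed in Proposition \ref{prop:stein}, rather than to reprove it from first principles. The starting input is the entrywise local semicircle law for the Wigner matrix $\rWg$ (with entries of variance $N^{-1}$): on the spectral domain described in the statement, $\max_{i,j}|\rWg_{ij}(z)-\delta_{ij}s(z)| \le N^{\epsilon}\Psi(z)$ and $|\tfrac1N\Tr \rWg(z)-s(z)|\le N^{\epsilon}(N\eta)^{-1}$ with high probability, where $s$ is as in \eqref{eq:def_s}, $\Psi(z)=\sqrt{\Im s(z)/(N\eta)}+(N\eta)^{-1}$, and $\epsilon>0$ is arbitrary. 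This is the result of \cite{MR3098073} used already in the proof of Lemma \ref{lemma:limit_thm_quad}; note that the edge threshold $E\ge 2+\varphi^{C_1}N^{-2/3}$ and the logarithmic factor $\varphi=(\log N)^{\log\log N}$ enter at this stage, since they are the thresholds under which the entrywise law holds with the stated precision near the spectral edge.

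The main work is the upgrade from entrywise to bilinear. Fix deterministic unit vectors $v,w$ and set $Z:=\langle v,\rWg(z)w\rangle - s(z)\langle v,w\rangle$. I would estimate the high moments $\E|Z|^{2p}$ for $p$ up to a small power of $\log N$ and conclude by Markov's inequality. Starting from $\langle v,w\rangle=\langle v,(\Wgint-zI)\rWg w\rangle$, apply the cumulant expansion of Proposition \ref{prop:stein} in each entry $\Wgint_{ij}$ — legitimate since all moments of $A_{ij}$ are finite and the $k$‑th cumulant of $\Wgint_{ij}$ is $O(N^{-k/2})$ — so as to replace $\E[\Wgint_{ij}(\cdots)]$ by derivatives of products of resolvent entries. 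The second‑cumulant (Gaussian) term reconstructs the scalar self‑consistent equation $s(z)^2+z\,s(z)+1=0$, so that $\langle v,\rWg w\rangle = s(z)\langle v,w\rangle + (\textrm{error})$; the higher‑cumulant contributions and the remainder $\epsilon_p$ in \eqref{eq:stein_residue} are shown to be subleading using the a priori bound $\|\rWg\|\le(N\eta)^{-1}$ (or the sharper edge bound) together with the entrywise estimate. Iterating this expansion inside the $2p$‑fold product and organizing the resulting sums over the ``internal'' indices — those not contracted against $v$ or $w$ — one observes that every such index occurs in at least two off‑diagonal resolvent entries, each carrying a factor $N^{\epsilon}\Psi$; a power‑counting bookkeeping then bounds each monomial by $\Psi^{2p}$ times an explicit combinatorial factor, yielding $\E|Z|^{2p}\le (C p)^{Cp}\varphi^{C_2 p}\Psi^{2p}$ and hence \eqref{eq:blinear_local_law} pointwise in $z$. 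Uniformity in $z$ then follows from a standard $N^{-C}$‑net in the spectral parameter and the Lipschitz bound $\|\partial_z \rWg\|\le\|\rWg\|^2$.

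The step I expect to be the main obstacle is precisely this last combinatorial power counting \emph{near the spectral edge}: far from $[-2,2]$ with $\eta$ of order one the estimate is soft (the crude inequality $N\Psi^2\lesssim 1$ suffices), but to reach $E$ within $\varphi^{C_1}N^{-2/3}$ of $\pm 2$ with small $\eta$ one must track cancellations carefully, since there $\|\rWg\|$ is no longer $O(1)$ and $N\Psi^2$ can be as large as $N^{1/3}$. Controlling these terms is the technical heart of \cite{KnowlesYin2011}, and it is what forces the precise edge threshold and the logarithmic correction $\varphi^{C_2}$ in the statement. An alternative route, valid only for the Gaussian case, is to interpolate from the GOE, where orthogonal invariance turns $\langle v,\rWg w\rangle$ into a weighted sum $\sum_i |u_i|^2/(\rho_i-z)$ with $u$ uniform on the sphere and explicit Beta‑distributed weights; but the direct entrywise‑to‑isotropic argument above is more robust and already delivers the general‑disorder statement needed in Section \ref{subsec:asy_gau}.
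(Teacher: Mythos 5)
The paper offers no proof of Proposition \ref{prop:iso_law} at all: it is quoted verbatim from Theorem 2.3 and Remark 2.4 of \cite{KnowlesYin2011}, so there is no internal argument against which your sketch can be matched. Your proposal correctly identifies the nature of the statement (the isotropic upgrade of the entrywise local law of \cite{MR3098073}) and the standard high-moment/Markov strategy, and it is honest about where the difficulty lies. Two caveats are worth recording. First, as a proof it has a genuine gap by your own admission: the decisive step --- the combinatorial power counting for $\E|Z|^{2p}$ that remains valid down to the edge scale $E-2\sim\varphi^{C_1}N^{-2/3}$ and produces the $\varphi^{C_2}$ factor in \eqref{eq:blinear_local_law} --- is only named, not carried out, and that step is precisely the content of \cite{KnowlesYin2011}; so what you have written is a plan that ultimately defers to the same reference the paper cites, not an independent derivation. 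Second, the route you describe is not the route of the cited work: \cite{KnowlesYin2011} does not use a Stein/cumulant expansion in the style of Proposition \ref{prop:stein}, but rather a graphical expansion of $\sum_{i\neq j}\bar v_i \rWg_{ij} w_j$ into maximally expanded resolvent entries controlled by large-deviation bounds and a self-consistent/bootstrap argument; cumulant-expansion proofs of isotropic laws are a later alternative, and if you pursue that route you must in particular justify the expansion for the non-Gaussian higher cumulants at the edge, which is exactly the bookkeeping you postpone. A small but real slip: the a priori resolvent bound is $\norm{\rWg(z)}\le \eta^{-1}$ (distance to the spectrum), not $(N\eta)^{-1}$; with the bound as you wrote it several of your ``subleading'' claims would be vacuous or wrong, so the power counting should be redone with $\eta^{-1}$ (or the sharper bound $\bigl(\dist(E,[-2,2])\bigr)^{-1}$ off the spectrum) as the trivial input.
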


Let \(\bse_i := (0,\cdots, 1, \cdots, 0)\). Noting that \(\sum_{p = 1}^N \rWg_{pi} = \sqrt{N}\langle \bse, \rWg \bse_i \rangle\), we can derive a prior bound for $\sum_{p = 1}^N \rWg_{pi}$, which is summarized in the following Corollary.
\begin{corollary}
\label{coro:tail_bound}
For any fixed \(E \in \R \backslash [-2,2]\), the tail bound
\beq
\label{eq:coro:tail_bound_1}
  |\sum_{p}(\rWg(E))_{pi}| \leq N^\epsilon
\eeq
holds simultaneously for \(i = 1, \cdots, N\) with high probability. We also have that
\beq
\label{eq:coro:tail_bound_2}
  |\langle v, \rWg(E)w \rangle - s(E)\langle v, w\rangle| \leq \norm{v}\norm{w}N^{-\frac12 + \epsilon}
\eeq
with high probability.
\end{corollary}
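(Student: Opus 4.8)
The plan is to read off both estimates directly from the isotropic local law, Proposition~\ref{prop:iso_law}, by specializing the deterministic vectors to $\bse$ and the standard basis vectors $\bse_i$, and then letting the spectral parameter approach the real axis.

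First I would prove \eqref{eq:coro:tail_bound_2}. Fix $E\in\R\setminus[-2,2]$, choose $\Sigma\ge\max\{3,|E|+1\}$, and pick $\delta>0$ with $E\notin[-2-\delta,2+\delta]$, so that $\rWg(E)$ is well-defined with high probability. For $N$ large, $E$ lies in the range of spectral parameters allowed by Proposition~\ref{prop:iso_law}, so for fixed deterministic $v,w$ we have, with high probability and uniformly in $\eta\in(0,\Sigma]$,
\begin{equation*}
  \bigl|\langle v,\rWg(E+\ii\eta)w\rangle - s(E+\ii\eta)\langle v,w\rangle\bigr| \le \varphi^{C_2}\sqrt{\tfrac{\Im s(E+\ii\eta)}{N\eta}}\,\norm{v}\norm{w}.
\end{equation*}
On the high-probability event that $\Wgint$ has a spectral gap around $E$, the left side is continuous (indeed analytic) in $\eta$ down to $\eta=0$; and since $s$ is analytic and real-valued at $E$ with $s(E+\ii\eta)=s(E)+\ii\eta\, s'(E)+O(\eta^2)$, we get $\Im s(E+\ii\eta)/(N\eta)\to s'(E)/N=O(N^{-1})$. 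Letting $\eta\to0^+$ and using the crude bound $\varphi^{C_2}=(\log N)^{C_2\log\log N}\le N^{\epsilon}$ for $N$ large yields $\bigl|\langle v,\rWg(E)w\rangle - s(E)\langle v,w\rangle\bigr|\le\norm{v}\norm{w}N^{-1/2+\epsilon}$ with high probability, which is \eqref{eq:coro:tail_bound_2}.

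Next I would deduce \eqref{eq:coro:tail_bound_1}. Since $\bse=N^{-1/2}\mathbf{1}$ and $\rWg$ is symmetric, $\sum_p(\rWg(E))_{pi}=\sqrt{N}\,\langle\bse,\rWg(E)\bse_i\rangle$. Applying \eqref{eq:coro:tail_bound_2} with the unit vectors $v=\bse$, $w=\bse_i$ and using $\langle\bse,\bse_i\rangle=N^{-1/2}$,
\begin{equation*}
  \sum_p(\rWg(E))_{pi}=\sqrt{N}\bigl(N^{-1/2}s(E)+O(N^{-1/2+\epsilon})\bigr)=s(E)+O(N^{\epsilon})=O(N^{\epsilon})
\end{equation*}
with high probability, $s(E)$ being an $E$-dependent constant of size $O(1)$. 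To obtain this simultaneously for $i=1,\dots,N$, I would apply the estimate for each fixed $i$ and take a union bound: since each failure probability is at most $N^{-D}$ for arbitrary $D$, the simultaneous failure probability is at most $N^{1-D}$, so the bound holds for all $i$ with high probability.

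Being a corollary, there is no serious obstacle; the only point that needs a word of care is the passage $\eta\to0^+$ from a complex to a real spectral parameter. This is harmless precisely because $E\notin[-2,2]$: the error factor $\sqrt{\Im s(E+\ii\eta)/(N\eta)}$ stays $O(N^{-1/2})$ in the limit, and on a high-probability event $\Wgint$ has a spectral gap around $E$, so the resolvent matrix elements extend continuously to $\eta=0$. Everything else is a direct specialization of Proposition~\ref{prop:iso_law} together with the estimate $\varphi^{C_2}\le N^{\epsilon}$.
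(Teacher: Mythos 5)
Your argument is correct and follows essentially the same route as the paper: both proofs are direct specializations of the isotropic local law (Proposition \ref{prop:iso_law}) to the vectors $\bse$ and $\bse_i$, followed by a passage from the complex spectral parameter to the real point $E$ and a union bound over $i$. The only (harmless) difference is how that passage is made: the paper evaluates at the fixed point $z=E+\ii N^{-1/2}$ and compares $\rWg(E)$ with $\rWg(z)$ via a resolvent bound together with $|s(z)-s(E)|=O(N^{-1/2})$, whereas you send $\eta\to 0^+$ using the uniformity of the estimate in $\eta$ and continuity of the resolvent on the spectral-gap event.
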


\begin{proof}
We first prove \eqref{eq:coro:tail_bound_2}. Consider \(z = E + \ii N^{-1/2}\). Using Proposition \ref{prop:iso_law}, we find there exists some $C>0$ such that
\beq
\begin{aligned}
    \langle v, \rWg(E)w \rangle - s(E) \langle v, w \rangle| 
    & \leq |\langle v, (\rWg(z) - \rWg(E)w\rangle| + |\langle v, \rWg(z)w \rangle - s(z) \langle v, w \rangle| + |s(z) - s(E)||\langle v, w\rangle| 
    \\& \leq CN^{-1/2}\norm{v}\norm{w} + C\varphi^CN^{-\frac{1}{2}}\norm{v}\norm{w} + CN^{-1/2}.
\end{aligned}
\eeq
Here we also use the fact that \(\Omega_1\) holds with high probability.
Since $\varphi \ll N^{\epsilon}$, \eqref{eq:coro:tail_bound_2} then follows.
The tail bound \eqref{eq:coro:tail_bound_1} can be obtained from \eqref{eq:coro:tail_bound_2} by setting $v = \sqrt{N}\bse$ and $w = \bse_i$.
\end{proof}
Based on our discussion above, we are ready to introduce the high probability event as promised. 
Set \(s_1 := s(z)\), \(s'_1:=s'(z)\) and \(s_2 := s(\hat{J}) = -J^{-1}\), the desired high probability event \(\Omega\) is the intersection of \(\Omega_1\) and the following events:
\begin{align}
\label{eq:event_2} 
    &\Omega_2 = \{|\sum_p (\rWg(\hat{J}))_{pi}| \leq N^\epsilon,\quad \forall i= 1, \cdots, N\} \cap \{|\rWg_{\bse\bse}(\hat{J}) - s_2| \leq N^{-\frac{1}{2} + \epsilon}\}, 
\\ \label{eq:event_3} 
    &\Omega_3 = \{|\rWg_{ij} - \delta_{ij}s_2| \leq N^{-\frac{1}{2}+\epsilon}, \quad \forall i,j = 1,\cdots, N\},
\\ \label{eq:event_4} 
    &\Omega_4 = \{|G_{ij} - \delta_{ij}s_1|, |(\rWgnm^2)_{ij} - \delta_{ij}s'_1|\leq N^{-\frac{1}{2} + \epsilon}, \quad \forall i,j = 1,\cdots,N\},
\\ \label{eq:event_5}
    &\Omega_5 = \{ |V_{ij}|,|V^G_{ij}|,|M_{ij}| \leq N^{-\frac{1}{2} + \epsilon}, \quad \forall i,j = 1,\cdots,N\}.
\end{align}
Here, by Corollary \ref{coro:tail_bound}, $\Omega_2$ is a high probability event. The fact that $\Omega_3$ and $\Omega_4$ are high probability events can be checked from Theorem 2.8 and Theorem 2.9 of \cite{MR3098073}. It is easy to check that $\Omega_5$ is a high probability event from the existence of all moments. Furthermore, by the Lipshitz continuity of the resolvents, we also find that $\Omega$ holds uniformly on $t$ with high probability.

Applying Proposition \ref{prop:stein} to Equation \eqref{eq:t_derivative} conditioning on $\Omega$, we claim
\beq
\label{eq:t_derivative_stein}
\begin{aligned}
 & \sum_{i,j}\E \left[\sWg_{ij}\left(t_1 \Re(\rWgnm^2)_{ij} + t_2 \Im (\rWgnm^2)_{ij}  +\frac{t_3}{\sqrt N}\sum_{p, q} \rWg_{pi} \rWg_{jq}  \right) e^{\expo}\Big \vert \Omega \right] \\
 =& \sum_{l = 1}^3 \cos^lt\sum_{i,j} \frac{\kappa_{l  + 1}^{\sWg_{ij}}}{l!}\E\left[\left(\frac{\partial}{\partial \Wgnm_{ij}} \right)^l \left( \left( t_1 \Re(\rWgnm^2)_{ij}  + t_2 \Im (\rWgnm^2)_{ij}  +\frac{t_3}{\sqrt N}\sum_{p, q} \rWg_{pi} \rWg_{jq}\right) e^{\expo} \right) \Big \vert \Omega \right] + O(N^{-\frac12 +\epsilon})
\end{aligned}
\eeq
where \(\kappa_{l}^{\sWg_{ij}}\) denotes the \(l\)-th cumulant of \(\sWg_{ij}\). Here, it is legal to replace the conditional cumulants by $\kappa_l^{V_{ij}}$,  since $\Omega$ is a high probability event.

To prove the claim, we begin by controlling the remainder term $\epsilon_p$ in \eqref{eq:stein_main}. 
On \(\Omega\), \(\rWgnm_{ij}, \rWg_{ij}\) and \((\rWgnm^2)_{ij}\) are \( O(1) \), and 
\beqq
N^{-\frac12}\sum_{p,q} \rWg_{pi} \rWg_{jq} = N^{-\frac12}\left(\sum_p \rWg_{pi} \right) \left(\sum_q \rWg_{qj} \right)= O(N^{-\frac{1}{2} + \epsilon}).
\eeqq
Thus, \(\frac{\partial}{\partial \Wgnm_{ij}} P = O(1)\) on \(\Omega\). From the resolvent identity and the definition of event $\Omega$, we find $\norm{\rWgnm(z;vV_{ij}) - \rWgnm(z;V_{ij})} = O(N^{-\frac{1}{2} +\epsilon})$ for $0\leq v\leq 1$. Thus on $\Omega$, \(\frac{\partial}{\partial \Wgnm_{ij}} P(t;vV_{ij}) = O(1)\) for $0\leq v\leq 1$.
Furthermore, we notice that
\beq  
\label{eq:1d_rWgnm}
  \frac{\partial}{\partial \Wgnm_{ij}} (\rWgnm^2)_{ij} = \frac{\partial}{\partial \Wgnm_{ij}} \sum_k\rWgnm_{ki}\rWgnm_{jk} = - \beta_{ij}\left(2\rWgnm_{ij} (\rWgnm^2)_{ij} + \rWgnm_{ii}(\rWgnm^2)_{jj} + \rWgnm_{jj}(\rWgnm^2)_{ii}\right),
\eeq
and
\beq
\label{eq:1d_rWg}
  \frac{\partial}{\partial \Wgnm_{ij}} \sum_{p} \rWg_{pi} = - \beta_{ij}\left(\rWg_{ji}\sum_{p}\rWg_{pi} + \rWg_{ii}\sum_{p}\rWg_{pj}\right).
\eeq
Thus we can obtain similar estimates for higher derivatives of $P$. Since \(V_{ij}^5 = O(N^{-\frac52 +5\epsilon})\) on $\Omega_5$, we find that
\beq
    |V_{ij}|^5 \left( 1+ \max_{1\leq j\leq 5} \left( \int_0^1 \left|\left(\frac{\partial}{\partial \Wgnm_{ij}} \right)^5 P \right| \dd v \right)^{\frac{5}{j}} \right) \leq C N^{-\frac52 +C\epsilon}
\eeq
on \(\Omega\). 
That is, \( \epsilon_3 \leq C N^{-\frac52 +C\epsilon} \), and after summing over \(i, j\), the claim \eqref{eq:t_derivative_stein} is proved.

We next consider the term in \eqref{eq:t_derivative} containing \(\sGOE\). Noting that the cumulants of order higher than 2 vanish for Gaussian random variables, it reduces to
\beq
\begin{aligned} \label{eq:t_derivative_GOE}
 & \sum_{i,j}\E \left[\sGOE_{ij}\left(t_1 \Re(\rWgnm^2)_{ij} + t_2 \Im (\rWgnm^2)_{ij}  +\frac{t_3}{\sqrt N}\sum_{p, q} \rWg_{pi} \rWg_{jq}  \right) e^{\expo} \Big \vert \Omega\right] \\
 =& (\sin t) \sum_{i,j} \kappa_{2}^{\sGOE_{ij}}\E\left[\frac{\partial}{\partial \Wgnm_{ij}} \left( \left( t_1 \Re(\rWgnm^2)_{ij}  + t_2 \Im (\rWgnm^2)_{ij}  +\frac{t_3}{\sqrt N}\sum_{p, q} \rWg_{pi} \rWg_{jq}\right) e^{\expo} \right) \Big \vert \Omega\right] + O(N^{-\frac{1}{2} + \epsilon}),
\end{aligned}
\eeq
where \(\kappa_2^{\sGOE_{ij}}\) denotes the second cumulant of \( \sGOE_{ij} \). We now put \eqref{eq:t_derivative_stein} and \eqref{eq:t_derivative_GOE} into \eqref{eq:t_derivative} conditioning on $\Omega$. This yields
\beq \label{eq:I_l defined}
\frac{\dd}{\dd t} \E \left[ e^{\expo(t)} \right| \Omega] = (\sin t ) \sum_{l=1}^3 (\cos^l t) I_l - (\cos t \sin t) I_1^G + O(N^{-\frac{1}{2} + \epsilon}),
\eeq
where we define
\beq
I_l = \sum_{i,j} \frac{\kappa_{l+1}^{\sWg_{ij}}}{l!}\E\left[\left(\frac{\partial}{\partial \Wgnm_{ij}} \right)^l \left( \left( t_1 \Re(\rWgnm^2)_{ij}  + t_2 \Im (\rWgnm^2)_{ij}  +\frac{t_3}{\sqrt N}\sum_{p, q} \rWg_{pi} \rWg_{jq}\right) e^{\expo} \right) \Big \vert \Omega\right] \\
\eeq
and
\beq
I_1^G = \sum_{i,j} \kappa_{2}^{\sGOE_{ij}}\E\left[\frac{\partial}{\partial \Wgnm_{ij}} \left( \left( t_1 \Re(\rWgnm^2)_{ij}  + t_2 \Im (\rWgnm^2)_{ij}  +\frac{t_3}{\sqrt N}\sum_{p, q} \rWg_{pi} \rWg_{jq}\right) e^{\expo} \right) \Big \vert \Omega\right].
\eeq
In the following, we will evaluate \(I_l\) for \(l=1, 2, 3\) separately. We may omit the conditioning on $\Omega$ for the ease of notation.

\subsubsection{Estimate for \( I_1 - I_1^G \)}

Since \(\kappa_2^{\sGOE_{ij}} = \kappa_2^{\sWg_{ij}} = \frac{1}{N} \) for \(i\neq j\), we only need to consider the contribution from the diagonal entries to \( I_1 - I_1^G \). By the definition of \(I_1\) and \( I_1^G \),
\beq \label{eq:t_derivative_l_1a}
    I_1 - I_1^G = \sum_i (\kappa_{2}^{\sWg_{ii}} - \kappa_{2}^{\sGOE_{ii}}) \E \left[ \frac{\partial}{\partial \Wgnm_{ii}} \left( \left( t_1 \Re(\rWgnm^2)_{ii}  + t_2 \Im (\rWgnm^2)_{ii} + \frac{t_3}{\sqrt N}\sum_{p, q} \rWg_{pi} \rWg_{iq}\right) e^{\expo} \right)\right].
\eeq
From \eqref{eq:1d_rWg}, we find that
\[
    \frac{t_3}{\sqrt N} \frac{\partial}{\partial \Wgnm_{ii}} \sum_{p, q} \rWg_{pi} \rWg_{iq} = O(N^{-\frac12+\epsilon}).
\]
Similarly, it can be checked that all terms in the right-hand side of \eqref{eq:t_derivative_l_1a} involving \(\rWg\) are \( O(N^{-\frac12 +\epsilon}) \). Collecting the terms of order $1$ only, we obtain that 
\beq
\begin{aligned}
    I_1 - I_1^G &= \frac{1}{N} \sum_i (w_2 -2) \E \left[ \left(2t_1 \Re\left((\rWgnm^2)_{ii}\rWgnm_{ii}\right) + 2t_2 \Im \left((\rWgnm^2)_{ii}\rWgnm_{ii}\right)+ (t_1\Re(\rWgnm^2)_{ii} + t_2 \Im(\rWgnm^2)_{ii})^2\right) e^{\expo} \right] \\
    &\qquad \qquad + O(N^{-\frac12+\epsilon}).
\end{aligned}
\eeq
Using the estimate \( |G_{ij} - \delta_{ij}s_1|, |(\rWgnm^2)_{ij} - \delta_{ij}s'_1| \leq N^{-\frac12+\epsilon}\) on \( \Omega_4 \), we conclude that
\beq
\label{eq:t_derivative_l_1}
  I_1 - I_1^G = (w_2 -2) \left(2t_1 \Re (s'_1 s_1) + 2t_2 \Im (s'_1 s_1)+ (t_1\Re(s'_1) + t_2\Im(s'_1))^2\right)\E \left[ e^{\expo} \right] + O(N^{-\frac{1}{2} + \epsilon}).
\eeq

\subsubsection{Estimate for \( I_2 \)}

We decompose \( I_2 \) into
\beq
\begin{aligned}
I_2 &=  \sum_{i,j}\frac{W_3}{2N^{\frac32}} \E \left[\left(\frac{\partial}{\partial M_{ij}} \right)^2 \left( \left( t_1 \Re(\rWgnm^2)_{ij} + t_2 \Im(\rWgnm^2)_{ij} + \frac{t_3}{\sqrt N} \sum_{p, q} \rWg_{pi} \rWg_{jq} \right) e^{\expo}\right)\right] \\
&:= I_{2, 0} + 2 I_{1, 1} + I_{0, 2},
\end{aligned}
\eeq
where
\beq
 I_{r, 2-r} :=  \sum_{i,j}\frac{W_3}{2N^{\frac32}} \E \left[\left(\frac{\partial}{\partial M_{ij}} \right)^r \left( t_1 \Re(\rWgnm^2)_{ij} + t_2 \Im(\rWgnm^2)_{ij} + \frac{t_3}{\sqrt N} \sum_{p, q} \rWg_{pi} \rWg_{jq} \right) \cdot \left(\frac{\partial}{\partial M_{ij}} \right)^{2-r} e^{\expo} \right].
\eeq

We first consider the case \( i \neq j \) in the summand of \( I_{r, 2-r} \) for \( r=0, 1, 2 \). Recall that all terms of \( O(N^{-\frac12+\epsilon}) \) are negligible in the sense that they can be absorbed into the error term in the right-hand side of \eqref{eq:I_l defined}.

\begin{itemize}
  \item For \( I_{2, 0} \), we note that the terms arising from the derivatives of the $\rWgnm^2$ are negligible, which can be checked by following the argument in the proof of Theorem 3.3 in \cite{Lytova2009}, especially the estimate of \(T_3\) in (3.53) of \cite{Lytova2009}. For example, one of such terms is bounded by
\beq
\label{eq:l_1_error}
    \left| N^{-\frac32} \sum_{i,j}\frac{W_3}{2} \E\left[t_1 \Re(\rWgnm_{ii}\rWgnm_{jj}(\rWgnm^2)_{ij}) e^\expo\right]\right| \leq \frac{C}{\eta^4 \sqrt{N}}.
\eeq
To prove it, we consider a vector $\bsu = (\rWgnm_{11}, \rWgnm_{22}, \dots, \rWgnm_{NN})$ and proceed as
\beqq
    \left| \sum_{i, j} \rWgnm_{ii}\rWgnm_{jj}(\rWgnm^2)_{ij} \right| = \left| \langle \overline{\bsu}, \rWgnm^2 \bsu \rangle \right| \leq \| \rWgnm^2 \| \| \bsu \|^2 \leq N \| \rWgnm^2 \| \| \rWgnm \|^2 \leq \frac{N}{\eta^4}.
\eeqq   

On the other hand,
\beq
  \begin{aligned}
      \left(\frac{\partial}{\partial M_{ij}}\right)^2 \rWg_{pi}\hat{\rWgnm}_{jq} &= 
6(\rWg_{pi} \hat{\rWgnm}_{ji}^2 \hat{\rWgnm}_{jq} + \hat{\rWgnm}_{pj}\hat{\rWgnm}_{ii} \hat{\rWgnm}_{ji}\hat{\rWgnm}_{jq} + \hat{\rWgnm}_{pi}\hat{\rWgnm}_{ji}\hat{\rWgnm}_{jj}\hat{\rWgnm}_{iq})\\
  & \qquad + \rWg_{ii}\hat{\rWgnm}_{jj}(4\hat{\rWgnm}_{pi}\hat{\rWgnm}_{jq} + 2\hat{\rWgnm}_{pj}\hat{\rWgnm}_{iq}).
\end{aligned}
\eeq
From the estimate \( |\rWg_{ij} - \delta_{ij}s_2| \leq N^{-\frac{1}{2}+\epsilon} \) on \( \Omega_3 \) the concentration of $\rWg_{\bse\bse}$ on $\Omega_2$, we then claim that
\beq
\begin{aligned} \label{eq:I20estimate}
  I_{2,0} =&  \frac{W_3t_3}{2N^2}\sum_{i,j} \E\left[6 \rWg_{ii}\hat{\rWgnm}_{jj}(\sum_p\hat{\rWgnm}_{pi})(\sum_q\hat{\rWgnm}_{qj})e^{\expo}\right] + O(N^{-\frac{1}{2}+\epsilon})\\
    =&  3W_3t_3 s_2^2\E \left[\rWg_{\bse\bse}^2 e^{\expo}\right] + O(N^{-\frac{1}{2}+\epsilon}) = 3W_3t_3 s_2^4 \E[e^{\expo}] + O(N^{-\frac{1}{2}+\epsilon}).
\end{aligned}
\eeq
All the other terms in \(I_{2, 0}\) arising from \(\left(\frac{\partial}{\partial M_{ij}} \right)^2 \sum_{p,q}\rWg_{pi} \hat{\rWgnm}_{jq}\) are negligible. One of such terms is bounded by
\beq
\label{eq:l_2_error_1}
\begin{aligned}
    \left| \frac{W_3t_3}{2N^2}\sum_{i,j} \E \left[(\sum_p\rWg_{pj})\hat{\rWgnm}_{ii} \hat{\rWgnm}_{ji}(\sum_q\hat{\rWgnm}_{jq} )e^{\expo} \right] \right| 
    \leq  \frac{2|W_3||t_3|}{(\hat{J} - 2)N^{\frac{5}{2} - 3\epsilon}} \sum_{i,j} \E \left[|e^{\expo}|\right] = O(N^{-\frac{1}{2} + 3\epsilon})
\end{aligned}
\eeq
where we use the definitions of $\Omega_1$, \( \Omega_2 \) and \( \Omega_3 \).

\item For \( I_{1, 1} \), the estimates for the negligible terms can be done by using the argument similar to \eqref{eq:l_2_error_1} and \eqref{eq:l_1_error}. The remaining $O(1)$-terms are
\beqq
    \frac{W_3t_3}{N^2}\sum_{i,j}\E\left[\sum_{p,q}\rWg_{pi} \rWg_{jq}\left( t_1\Re \left(\rWgnm_{ii}(\rWgnm^2)_{jj} + \rWgnm_{jj} (\rWgnm^2)_{ii}\right) + t_2 \Im \left(\rWgnm_{ii}(\rWgnm^2)_{jj} + \rWgnm_{jj} (\rWgnm^2)_{ii}\right)\right)e^{\expo}\right].
\eeqq
Using the definitions of $\Omega_2$ and $\Omega_4$, we write
\beq
\begin{aligned} \label{eq:I11estimate}
  I_{1, 1} 
    & = 2W_3t_3 \left(t_1\Re(s_1s'_1) + t_2 \Im(s_1s'_1)\right)\E\left[\rWg_{\bse\bse}^2 e^{\expo}\right]  + O(N^{-\frac{1}{2} + \epsilon}) \\
    & = 2W_3t_3 \left(t_1\Re(s_1s'_1) + t_2 \Im(s_1s'_1)\right)s_2^2\E\left[e^\expo\right] + O(N^{-\frac{1}{2} + \epsilon}).
\end{aligned}
\eeq

\item For \( I_{0, 2} \), from the same analysis as for \( I_{1, 1} \), 
\beq \label{eq:I02estimate}
  I_{0, 2} = 2W_3t_3 \left(t_1\Re(s_1s'_1) + t_2 \Im(s_1s'_1)\right)s_2^2\E\left[e^\expo\right] + O(N^{-\frac{1}{2} + \epsilon}).
\eeq
Again, the estimate can be done in a similar manner.
\end{itemize}

For the case \(i = j\), since there are only \( N \) terms in the summation in \( I_2 \), all terms are negligible due to the priori bounds on \( \|\rWgnm\| \) and $\sum_{p} \rWg_{pi}$. 

Collecting the terms in \eqref{eq:I20estimate}, \eqref{eq:I11estimate}, and \eqref{eq:I02estimate}, we get 
\beq
\label{eq:t_derivative_l_2}
\begin{aligned}
 & \sum_{i,j} \frac{\kappa_{3}^{\sWg_{ij}}}{2!}\E\left[\left(\frac{\partial}{\partial M_{ij}} \right)^2 \left( \left( t_1 \Re(\rWgnm^2)_{ij} + t_2 \Im(\rWgnm^2)_{ij} + \frac{t_3}{\sqrt N} \sum_{p, q} \rWg_{pi} \rWg_{jq} \right) e^{\expo} \right)\right] \\ 
    =& W_3\left[3 t_3 s_2^4  + 6t_1t_3\Re(s_1s'_1)(s_2)^2 + 6t_2t_3\Im(s_1s'_1)(s_2)^2 \right] \E\left[e^{\expo}\right] + O(N^{-\frac{1}{2} +\epsilon}).
\end{aligned}
\eeq

\subsubsection{Estimate for \( I_3 \)}
Note that any term in \( I_3 \) involving \(\rWg\) is negligible due to the extra \(N^{-\frac12}\) factor. Estimating as in the previous subsection, we obtain that
\beq
\label{eq:t_derivative_l_3}
\begin{aligned}
  I_3 =&\sum\frac{\kappa_4^{\sWg_{ij}}}{3!}\E\left[\left(\frac{\partial}{\partial M_{ij}} \right)^3\left( \left( t_1 \Re(\rWgnm^2)_{ij} + t_2 \Im(\rWgnm^2)_{ij }+ \frac{t_3}{\sqrt N} \sum_{p, q} \rWg_{pi} \rWg_{jq} \right) e^{\expo} \right)\right]\\ 
    =& -4(W_4 - 3) \Bigg[t_1 \Re(s_1^3s'_1) + t_2 \Im(s_1^3s'_1) + \left(t_1\Re(s_1s'_1) + t_2\Im(s_1s'_1)\right)^2\Bigg]\E\left[e^{\expo}\right] + O(N^{-\frac{1}{2} + \epsilon})
\end{aligned}
\eeq
We remark that $O(1)$-terms in \( I_3 \) contribute only to the corrections of linear statistics.

\subsubsection{Proof of Theorem \ref{prop:asy_ind_ls_eg1} for general case}

Let
\beq
\begin{aligned}
    \tilde{\expo}(t) =& \expo(t) - (W_2 - 2)(\cos t)^2\left(t_1\Re(s'_1s_1) + t_2\Im(s'_1s_1) + \frac{1}{2}\left(t_1\Re(s'_1) + t_2\Im(s'_1)\right)^2\right)\\
                         & + W_3(\cos t)^3 \left(t_3s_2^4 + 2t_1t_3\Re(s_1s'_1)s_2^2 + 2t_2t_3\Im(s_1s'_1)s_2^2\right) \\
                         & - (W_4 - 3)(\cos t)^4\left(t_1\Re(s_1^3s'_1) + t_2 \Im(s_1^3s'_1) + (t_1\Re(s_1s_1')+ t_2\Im(s_1s_1'))^2\right).
\end{aligned}
\eeq
Then, plugging \eqref{eq:t_derivative_l_1}, \eqref{eq:t_derivative_l_2}, and \eqref{eq:t_derivative_l_3} into \eqref{eq:t_derivative_stein}, we find that   
\beq
    \frac{\dd}{\dd t}\E[e^{\tilde{\expo}} | \Omega] = O(N^{-\frac{1}{2} + \epsilon}),
\eeq
which implies that
\beq
 \E[e^{\tilde{\expo}(0)}| \Omega] = \E[e^{\tilde{\expo}(\frac{\pi}{2})}| \Omega] + O(N^{-\frac{1}{2} + \epsilon}). 
\eeq
Thus,
\beq
\begin{aligned}
    \lim_{N \rightarrow \infty}\E\left[e^\expo(0)\right] =& \lim_{N \rightarrow \infty} \left(\E\left[e^{\expo(0)}|\Omega\right]\P(\Omega) + \E\left[e^{\expo(0)}|\Omega^c\right]\P(\Omega^c) \right) \\
    =& e^{\expo(0) - \tilde{\expo}(0)}\lim_{N \rightarrow \infty} \E[e^{\tilde{\expo}(0)}|\Omega]= e^{\expo(0) - \tilde{\expo}(0)}\lim_{N \rightarrow \infty} \E[e^{\expo(\frac{\pi}{2})}].
\end{aligned}
\eeq
Here we use the fact that \(\Omega\) holds with high probability and \(\tilde{\expo}(\frac{\pi}{2}) = \expo(\frac{\pi}{2})\). We can now conclude that $(\Re \xi_N(z), \Im \xi_N(z), n_N)$ converges to a multivariate Gaussian vector in distribution as \(N \rightarrow \infty\). By direct calculation, we also find that
\beq
  \begin{pmatrix} \xi_N(z)\\ n_N\end{pmatrix} \Rightarrow \mathcal{N} \left(\begin{pmatrix} b(z) \\ -W_3s_2^4 \end{pmatrix}, \begin{pmatrix} V(z_1) & -2W_3s_1s_1's_2^2 \\ -2W_3s_1s_1's_2^2 & \frac{2}{J^2(J^2 - 1)}\end{pmatrix}\right)
\eeq
with \(b(z)\) and \(V(z)\) are defined in Lemma \ref{lemma:ls_full}. 
Now, using \eqref{eq:fluct_1_quad}, we arrive at
\beq
     \begin{pmatrix} \xi_N (z)\\ \chi_N \end{pmatrix} \Rightarrow \mathcal{N} \left(\begin{pmatrix} b(z) \\ \frac{W_3}{J^2}(1 - \frac{1}{J^2}) \end{pmatrix}, \begin{pmatrix} V(z_1) & 2W_3s_1s_1'(1 - \frac{1}{J^2}) \\ 2W_3s_1s_1'(1 - \frac{1}{J^2}) & 2(1 - \frac{1}{J^2})\end{pmatrix}\right).
\eeq
Hence, the asymptotic Gaussianity of $(\mathcal{N}_N^{(2)}(\varphi), \chi_N)$ follows. For \eqref{eq:bigau_mean} and \eqref{eq:bigau_cov}, the mean and the variance of $\mathcal{N}^{(2)}[\varphi]$ is given in Theorem \ref{conj:1}. The limiting covariance is given by 
\beq
    -2W_3(1 - \frac{1}{J^2})\oint_{\Gamma}\varphi(z)s(z)s'(z)\frac{\dd z}{2\pi \ii} = 2W_3(1 - \frac{1}{J^2})\tau_1(\varphi).
\eeq
where we use the change of variables $z \mapsto s$ mapping \(\C \setminus [-2,2]\) to the disk \(|s| < 1\) with \(s + \frac{1}{s} = -z\) and (4.16) in \cite{MR3649446}. This completes the proof of Theorem \ref{prop:asy_ind_ls_eg1} for general case.

\section{Matching}
\label{sec:large_t_behavior}

In the transitional regime, we took $2\beta= \frac1{J} + \frac{B}{\sqrt{N}}$. 
The ferromagnetic regime and the paramagnetic regime correspond to the limiting cases $2\beta>J$ and $2\beta<J$, respectively. 
In this section, we will consider formal limits $B\to \pm \infty$ of the formula given in the main result, Theorem~\ref{thm:fluc_free_energy_para_ferro}, and check the consistency with the results for ferromagnetic and  paramagnetic regimes obtained in \cite{MR3649446}. 

Theorem~\ref{thm:fluc_free_energy_para_ferro} states that the free energy $F_N$ is close to the random variable
\beq    \label{transfor}
        \cFt:= \frac{1}{4J^2} +\frac{\bnp}{2J\sqrt{N}} + \frac{\log N}{4N}  + \frac{\bnp^2 J^2}{4N}
        + \frac{1}{N}\go + \frac1{N} \fe(\gt)
\eeq
in an appropriate sense. Here, $(\go, \gt)$ is a Gaussian vector independent of $B$. The function $\fe(x)$ is given by~\eqref{eq:def_fe}. 
In ferromagnetic and paramagnetic regimes, \cite{MR3649446} shows that the free energy is close to 
\beq \label{cff}
    \cFf:=  \beta \big( J + \frac{1}{J} \big)  - \frac{1}{2} \log (2\beta J) - \frac{1}{4J^2} -  \frac{1}{2}
    +  \frac{\beta- \frac1{2J}}{\sqrt{N}} \mathcal{N}(f_2', \alpha_2')
\eeq
and
\beq \label{cfp}
    \cFp:= \beta^2+ \frac1{N} \mathcal{N}(f_1, \alpha_1),
\eeq
respectively, 
where $\mathcal{N}(f, \alpha)$ denotes a Gaussian distribution of mean $f$ and variance $\alpha$. 
The parameters for the Gaussians are (see (4) of \cite{BaikLee17Erratum} which corrected an error in \cite{MR3649446}) 
\beq \label{fs2alp2} \begin{split}
    f_2' &= W_3(J^{-2} - J^{-4}), \\
    \alpha_2' &= 2 (1-J^{-2}) 
\end{split} \eeq
and (see (1.11) and (1.12) of \cite{MR3649446}; we set $J'=J$)
\beq \label{eq:f1} \begin{split}
    f_1 &= \frac{1}{4} \log(1-4\beta^2) + \beta^2(w_2-2) + 2\beta^4(W_4-3) - \frac{1}{2} \log(1-2\beta J), \\
    \alpha_1 &= -\frac{1}{2} \log (1-4\beta^2) + \beta^2 (w_2-2) + 2\beta^4 (W_4-3).
\end{split} \eeq

The function $\fe(x)$ in~\eqref{transfor} is given by 
\beq \label{feforaa}
    \fe(x)=  \frac{\es(x)}{2(\es(x) - x)} - \frac{\es(x)^2}{4(J^2 - 1)} + \frac{\log(\es(x) - x)}{2} + \log \intc\left(\frac{(\es(x)-x)^2}{J^2 - 1}\right)
\eeq
where (recall the formula~\eqref{eq:def_es}) 
\begin{equation} \label{eq:esfolim} \begin{split}
    \es(x) &= \frac{x - \bnp(J^2 - 1) + \sqrt{(x + \bnp(J^2 - 1))^2 + 4(J^2 - 1)}}{2} .
\end{split} \end{equation}
From the formula, for $x=O(1)$, 
\begin{equation} \label{eq:esfolim} 
    \es(x) = \begin{cases}
    x + \frac{1}{B} + O(B^{-2}) \quad & \text{as $B\to +\infty$,} \\
    -B(J^2-1) - \frac{1}{B} + O(B^{-2}) \quad & \text{as $B\to -\infty$.} 
\end{cases} 
\end{equation}

Note that since we set $2\beta= \frac1{J} + \frac{B}{\sqrt{N}}$ in the transitional regime, 
we regard $B=O(\sqrt{N})$ when we take $B\to \pm\infty$.

\subsection{$B\to +\infty$}

Using~\eqref{eq:esfolim}, we find that  for $x=O(1)$, 
\beq
    Q(x) = \frac{Bx}2 + O(\log B).
\eeq
Hence, since $\go$ does not depend on $B$, we see that as $B=O(\sqrt{N})$ with $B>0$, 
\begin{equation}
        \cFt=  \frac{1}{4J^2} + \frac{\bnp}{2J\sqrt{N}} +  \frac{\bnp^2 J^2}{4N} +   
        \frac{B}{2N} \gt  + O\left(\frac{\log B}{N}\right) + O\left(\frac{\log N}{N}\right).
\end{equation}
where $O(f(B, N))$ represents a random variable $X$ such that the moments of $\frac{X}{f(B, N)}$ are all bounded by constants independent of $B$ and $N$.  

We compare the above formula with the ferromagnetic case~\eqref{cff}. 
If we set $2\beta= \frac1{J} + \frac{B}{\sqrt{N}}$, then 
\beq
    \cFf=   \frac{1}{4J^2} + \frac{\bnp}{2J\sqrt{N}} +  \frac{\bnp^2 J^2}{4N}
    +  \frac{B}{2N} \mathcal{N}(f_2', \alpha_2') + O(N^{-3/2}).
\eeq
We note that (see~\eqref{fs2alp2} and~\eqref{gtmeanvc}) the mean and variance are $f_2'= \Exp[\gt]$ and $\alpha_2'= \Var[\gt]$.
The above formula of $\cFt$ is thus consistent with $\cFf$.

\subsection{$B\to -\infty$}

Consider~\eqref{feforaa}. Recall that $\intc(\alpha)= \sqrt{\frac{4\pi}{\alpha}}(1 + O(\alpha^{-1}))$ as $\alpha\to +\infty$ from~\eqref{eq:ialpha_infty}. 
Hence, if $x=O(1)$ and $s(x)\to \infty$, then 
\beq
    Q(x) =  - \frac{s(x)^2}{4(J^2-1)} + \log \sqrt{\frac{4\pi (J^2-1)}{s(x)}} + \frac12 +O\left( \frac1{s(x)} \right).
\eeq
Using~\eqref{eq:esfolim}, we find that for $x=O(1)$, 
\beq
    Q(x) = - \frac{B^2(J^2-1)}{4}+  \log \sqrt{\frac{4\pi}{|B|}}  + O(B^{-1}).
\eeq
Hence, the two leading terms of $\fe(\gt)$ do not depend on $\gt$. 
Therefore, for $B=O(\sqrt{N})$ with $B<0$, 
\begin{equation} \begin{split}
        \cFt 
        &= \frac14 \left( \frac1{J} + \frac{B}{\sqrt{N}} \right)^2 
        +  \frac{1}{2N} \log \left( \frac{4\pi \sqrt{N}}{|B|} \right) 
        +   \frac{1}{N} \go  + O\left( \frac1{NB} \right). 
\end{split} \end{equation}

On the other hand, in the paramagnetic regime, 
if we set $2\beta= \frac1{J}+ \frac{B}{\sqrt{N}}$ with $B<0$, then the parameters in~\eqref{eq:f1} satisfy (see~\eqref{gomeanvc})
\beq \begin{split}
    f_1&= \frac{1}{4} \log(1-J^{-2}) + \frac1{4J^2}(w_2-2) + \frac1{8J^4}(W_4-3)  - \frac{1}{2} \log\left( \frac{|B|J}{\sqrt{N}} \right)+ O(N^{-1/2}) \\
    &= \Exp[\go]  + \frac12  \log \left( \frac{4\pi \sqrt{N}}{|B|} \right)   + O(N^{-1/2}) 
\end{split} \eeq
and
\beq
    \alpha_1 = -\frac{1}{2}\log(1 - J^{-2}) + \frac{w_2 - 2}{4J^2} + \frac{W_4 - 3}{8J^4}+ O(N^{-1/2}) 
    = \Var[\go]+ O(N^{-1/2}) 
\eeq
Thus, if we set $2\beta= \frac1{J}+ \frac{B}{\sqrt{N}}$ with $B<0$, 
then 
\beq
    \cFp = \frac14 \left( \frac1{J} + \frac{B}{\sqrt{N}} \right)^2 
    +  \frac{1}{2N} \log \left( \frac{4\pi \sqrt{N}}{|B|} \right)  + \frac1{N} \mathcal{N}(\Exp[\go], \Var[\go]) + O(N^{-3/2}) .
\eeq
This is consistent with the formula of $\cFt$.


\end{document}